\newcommand{\C}{\mathbb{C}}
\newcommand{\g}{\mathfrak{g}}
\newcommand{\gl}{\mathfrak{gl}}
\newcommand{\p}{\mathfrak{p}}
\newcommand{\kl}{\mathfrak{k}}
\newcommand{\sol}{\mathfrak{o}}
\newcommand{\spl}{\mathfrak{sp}}
\DeclareMathOperator{\rk}{rk}
\DeclareMathOperator{\codim}{codim}
\DeclareMathOperator{\Lie}{Lie}
\DeclareMathOperator{\Hom}{Hom}
\DeclareMathOperator{\GL}{GL}
\DeclareMathOperator{\aug}{aug}
\newcommand{\Qed}{}
\theoremstyle{plain}
\newtheorem{theorem}{Theorem}
\newtheorem{lemma}[theorem]{Lemma}
\newtheorem{proposition}[theorem]{Proposition}
\theoremstyle{definition}
\newtheorem{definition}[theorem]{Definition}
\theoremstyle{remark}
\newtheorem*{remark}{Remark}
\newtheorem*{example}{Example}
\title{Normality of closure of orthogonal nilpotent symmetric orbits}
\author{Marco Trevisiol}
\begin{document}
\maketitle

\begin{abstract}
    We study closures of conjugacy classes in the symmetric matrices of the orthogonal group and we determine which one are normal varieties. In contrast to the result for the symplectic group where all classes have normal closure, there is only a relatively small portion of classes with normal closure. We perform a combinatorial computation on top of the same methods used by Kraft-Procesi and Ohta.
\end{abstract}

\section*{Introduction}

In a fundamental paper of Kostant \cite{kostant1963lie}, the adjoint action on a reductive Lie algebra $\g$ defined over an algebrically closed field $k$ of characteristic 0 is studied in detail. In the course of his  analysis it arose the following problem. Let $A \in \g$, let $C_A$ be the conjugacy class of $A$ and let $\overline{C_A}$ be the (Zarisky) closure of $C_A$. Is $\overline{C_A}$ always a normal variety?

In his paper Kostant showed that if $A$ is a regular nilpotent element of $\g$, so that $\overline{C_A}$ is the nilpotent cone of $\g$, the normality is always the case.

In \cite{hesselink1976singularities}, the problem of normality of $\overline{C_A}$ is reduced to the case in which $A$ is nilpotent, possibly changing the Lie algebra $\g$.

Later, in \cite{broer1994normality}*{Theorem 4.1}, Broer proved the normality of a handful of nilpotent orbits in $\g$, including the regular and the subregular orbits (if $\g$ is simple).

In the fundamental paper \cite{kraft1979closures} of Kraft and Procesi, the normality of $\overline{C_A}$ is proved for every nilpotent $A$ in the case $\g = \gl_n$. Their method consists in constructing an auxiliary variety $Z$ which is a normal complete intersection such that $\overline{C_A}$ is a \emph{quotient} of $Z$.

Kraft and Procesi extended their method to the case where $\g$ is the orthogonal or symplectic Lie algebra in \cite{kraft1982geometry}. In that case not all nilpotent classes have normal closure. They obtained necessary and sufficient conditions on the partition of $A$ in order to have the normality of $\overline{C_A}$, under suitable hypothesis on $A$.

In a subsequent paper by Sommers \cite{sommers2005normality}, the cases outside of this hypothesis (in \cite{kraft1982geometry}) are proved to always have normal closure. Sommers also solved the problem for the exceptional case $E_6$ in \cite{sommers2003normality}.

Kostant and Rallis generalized the study  \cite{kostant1963lie} of the adjoint action on $\g$ to the action on the symmetric space in \cite{kostant1971orbits}. We briefly describe their setting. Let $\theta$ be a Lie algebra automorphism of $\g$ of order $2$. Then there exists a decomposition of $\g$ in $\theta$-eigenspaces given by
\[
    \g = \kl \oplus \p
\]
where $\kl = \{A \in \g : \theta(A) = A\}$ and $\p = \{A \in \g : \theta(A) = -A\}$. Let $G$ be the adjoint group of $\g$ and $K \subseteq G$ be the subgroup of elements commuting with $\theta$. We notice that $\kl$ is a Lie subalgebra of $\g$ and $\Lie K = \kl$. We remark that $K$ is not necessarily connected. Moreover the adjoint action of $G$ on $\g$ induces an action of $K$ on $\g$ and both $\kl$, $\p$ are $K$-stable. The pair $(\g, \kl)$ is called a symmetric pair and $\p$ is the symmetric space associate to it.

In their study, Kostant and Rallis focused on the action of $K$ on $\p$. Among other things, they already observed that the nilpotent cone is not irreducible nor normal in general.

In \cite{vinberg1976weyl}, Vinberg further generalized the study of the adjoint action to graded Lie algebras, also called $\theta$-groups. Here $\theta$ is an automorphism of a Lie algebra $\g$ of finite order $n$, thus generalizing the case $n=2$ of \cite{kostant1971orbits}. The graduation induced by $\theta$, $\g = \g_0 \oplus \dots \oplus \g_{n-1}$, produces an action of the $\theta$-fixed points of the adjoint group on $\g_1$.

After the work of  Kostant, Rallis and Vinberg, the problem of the normality of $\overline{C_A}$ naturally generalizes to the case of symmetric spaces.

In \cite{sekiguchi1984nilpotent}, Sekiguchi studied deeply the geometry of the nilpotent cone in symmetric spaces. In particular he proved several results for the principal nilpotent elements (which are the analogue of the regular nilpotent elements) when the symmetric space is the orthogonal symmetric space or the symplectic symmetric space, that is, the symmetric spaces of the symmetric pairs $(\g, \kl) = (\gl(n), \sol(n))$ and $(\g, \kl) = (\gl(2m), \spl(m))$ respectively.

The method of the auxiliary variety $Z$ developed by Kraft and Procesi in \cites{kraft1979closures, kraft1980minimal, kraft1982geometry} was adapted by Ohta in \cite{ohta1986singularities} to the study of the singularities of the orbits in the orthogonal and symplectic symmetric spaces. In particular he proved that $\overline{C_A}$ is always normal in the case of symplectic symmetric space. Moreover he found several orbits $C_A$ in the case of the orthogonal symmetric space which have a non-normal closure.

It is well known that the conjugacy classes of the nilpotent cone of the orthogonal symmetric space are parametrized by the partitions of $n$ (see \cref{sec:sno} for more details). The main purpose of this paper is to give a necessary and sufficient condition on the partition corresponding to the orbit of a nilpotent symmetric element $A$ in order to have the normality of $\overline{C_A}$. The following theorem is the main result of the paper.

\begin{theorem}
    \label{thm:intro:main}
    Let $\p$ be the symmetric space of the symmetric pair $(\gl(n),\sol(n))$ and let $A \in \p$ be a nilpotent element. Let $\lambda = (\lambda_1, \dots, \lambda_h)$ be the partition of $A$. Then $\overline{C_A}$ is normal if and only if
    \begin{equation}
        \label{eqn:intro:1step}
        \lambda_{i} - \lambda_{i+1} \leq 1 \quad \forall i = 1, \dots, h
    \end{equation}
    with the convention that $\lambda_{h+1} = 0$.
\end{theorem}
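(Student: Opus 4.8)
The plan is to follow the transverse-slice method of Kraft--Procesi \cite{kraft1982geometry}, in the form adapted to symmetric spaces by Ohta \cite{ohta1986singularities}, and then to add the combinatorial bookkeeping that reads off normality from it. I first use that the nilpotent $K$-orbits in $\p$ are indexed by the partitions of $n$ (see \cref{sec:sno}) and that the closure order on them is the dominance order, since membership in $\overline{C_A}$ is detected by the $K$-invariants $\rk A^{k}$. Writing $C_{\lambda}$ for the orbit of partition $\lambda$, and recalling that $K = O(n)$ is disconnected (so that orbits and their closures may be reducible), I reduce normality to a local question: for each boundary orbit $C_{\mu} \subset \overline{C_{\lambda}} \setminus C_{\lambda}$, decide whether $\overline{C_{\lambda}}$ is normal along $C_{\mu}$. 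By the slice theorem, \'etale-locally along $C_{\mu}$ the variety $\overline{C_{\lambda}}$ is the product of $C_{\mu}$ with a transverse slice $S_{\lambda,\mu}$, and $S_{\lambda,\mu}$ is itself, up to a smooth factor, the closure of a nilpotent orbit in a smaller orthogonal symmetric space. Running this as an induction on $n$, the goal is the clean reduction: $\overline{C_{\lambda}}$ is normal if and only if the slice $S_{\lambda,\mu}$ is normal for every covering relation $\mu \lessdot \lambda$ in the dominance order.

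Next I classify the covers and identify their slices. Brylawski's description of the dominance order says that a cover $\mu \lessdot \lambda$ moves a single box of the Young diagram from a row $i$ down to a row $j$, and splits these into \emph{gap moves}, with $j = i+1$ and $\lambda_{i} - \lambda_{i+1} \ge 2$, and \emph{flat moves}, where the box travels across rows differing by at most one. Inserting each family into the local models of Kraft--Procesi and Ohta, I expect the flat moves to yield normal slices (the minimal singularities of Kraft--Procesi type, e.g.\ cones over products of projective spaces), and each gap move to yield one explicit slice that is \emph{not} normal --- indeed, as Ohta's examples already show, this bad slice can be reducible, so that non-normality may appear already as a failure of irreducibility. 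Carrying out these slice identifications in the symmetric-space setting, and in particular proving the gap-move slice non-normal (by computing its normalization, or by exhibiting an integral function that its coordinate ring omits) while checking that the flat-move slices are normal, is the geometric core of the argument and the step I expect to be the main obstacle.

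Granting the slice computations, both implications of the theorem become combinatorics. If $\lambda_{i} - \lambda_{i+1} \ge 2$ for some $i$ --- where the convention $\lambda_{h+1} = 0$ incorporates the case $\lambda_{h} \ge 2$ through a new bottom row --- then the gap move at row $i$ is a genuine cover $\mu \lessdot \lambda$, its slice is non-normal, and hence $\overline{C_{\lambda}}$ is non-normal. Conversely, if $\lambda_{i} - \lambda_{i+1} \le 1$ for all $i$, then no gap move exists: every cover is a flat move with normal slice, and the induction of the first paragraph gives normality of $\overline{C_{\lambda}}$. Putting the two directions together yields exactly condition \eqref{eqn:intro:1step}. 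Besides the slice identification, the points needing care are the validity of the local-to-global reduction (the relevant Serre $S_{2}$ condition in the inductive step) and the bookkeeping of codimension-one degenerations, both of which I would handle within the Kraft--Procesi--Ohta framework.
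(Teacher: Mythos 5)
Your \emph{only if} direction is essentially the paper's own argument (\cref{thm:nor:nec}): a step $\lambda_i-\lambda_{i+1}\geq 2$ gives the minimal degeneration $\mu\lessdot\lambda$ moving one box from row $i$ to row $i+1$; Ohta's cancellation theorem (\cite{ohta1986singularities}*{Theorem 2}) reduces that singularity to the one of $C_{(m-1,1)}$ inside $\overline{C_{(m)}}$ with $m=\lambda_i-\lambda_{i+1}$, which is non-normal by Sekiguchi; and non-normality along a single boundary orbit kills normality. That half is fine.

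The genuine gap is in your \emph{if} direction, and it is not the ``bookkeeping'' you defer --- it is the theorem itself. First, your proposed equivalence ``$\overline{C_\lambda}$ is normal iff the slice is normal at every cover $\mu\lessdot\lambda$'' needs normality along \emph{all} boundary orbits, and your induction on $n$ cannot reach the orbits where no reduction is available: row/column cancellation only removes \emph{common leading} rows and columns, so when $\lambda$ and $\mu$ share none it produces nothing, and no identification of the transverse slice with an orbit closure in a smaller orthogonal symmetric space exists. This is not a marginal case: for the deepest orbit $C_{(1^n)}=\{0\}$ the ``slice'' is all of $\overline{C_\lambda}$ and cancellation is trivial for every $\lambda\neq(1^n)$, so your induction says literally nothing about normality at the origin; and irreducible pairs occur already among covers at every size, e.g.\ the staircase covers $(2,1^{k})\gtrdot(1^{k+2})$. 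Second, your claim that flat-move slices are ``minimal singularities of Kraft--Procesi type, e.g.\ cones over products of projective spaces'' imports the Lie-algebra classification into the symmetric-space setting, where the slices are genuinely different varieties (already the gap-move slice is a pair of crossing lines rather than an $A_{m-1}$-singularity); for flat moves no such classification is in the literature you cite, and the normality of these base cases --- e.g.\ of $\overline{C_{(2,1^{k})}}$ at the origin --- is part of what is being proved, so the induction has no base. The paper avoids local slice analysis entirely for this direction (\cref{thm:nor:suff}): it works with the global Kraft--Procesi--Ohta variety $Z=Z^{(\lambda)}$, shows $Z$ is a complete intersection when $\lambda$ is $2$-step (\cref{pro:ci:ci}), and then --- this is where all the work goes, via the statistics $q,c,r$ of \cref{sec:diff} and the dimension estimates \cref{prop:comb:big,prop:comb:bigr} of \cref{sec:comb} --- shows the $1$-step condition forces $\dim(Z\setminus Z_{\tau^0})\leq\dim Z-2$. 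Serre's criterion ($S_2$ from the complete intersection, $R_1$ from the codimension bound) gives normality of $Z$, and $\overline{C_\lambda}\cong Z/H$ is then normal as a quotient by a reductive group. In other words, the $S_2$ input you flag as ``needing care'' is exactly what the complete-intersection construction supplies and what a purely local, cover-by-cover induction cannot.
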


The proof of \cref{thm:intro:main} will be carried out in \cref{sec:nor}. The \emph{only if} part was already proved in \cites{sekiguchi1984nilpotent, ohta1986singularities} (see \cref{sec:nor} for details).

We remark that, if $n$ is even, $C_A$ is not necessarily connected, as the group $K$ is not connected. Therefore, sometimes, $\overline{C_A}$ is not normal just because it is not irreducible, being the union of two closure of $SO(n)$-orbits (as an example, if one takes $\lambda=(2)$, one can shows that $\overline{C_A}$ is the union of two lines which meet at $0$). As the referee has kindly pointed out, it would be interesting to investigate the normality of the closure of the $SO(n)$-orbits.

We summarise the content of the rest of the paper.

In \cref{sec:sno} we recall some basic facts about symmetric nilpotent orbits for the orthogonal group. In \cref{sec:ortoab} we recall from \cite{kraft1979closures} the classification of nilpotent pairs via $ab$-diagrams and we describe the class of $ab$-diagrams which parametrize symmetric nilpotent pairs. In \cref{sec:zvar} we recall the construction of the variety $Z$ and some of its properties which are needed for the proof. In \cref{sec:sstep} we define a condition on partitions which plays an important role in our investigation. In  \cref{sec:diff} we introduce some combinatorial description of pairs of partitions. In  \cref{sec:ci}, resp. \cref{sec:nor}, we prove a complete intersection, resp. normality, condition for the variety $Z$ using a combinatorial computation carried on in \cref{sec:comb}.

I am grateful to Claudio Procesi for proposing me this problem. I would like to thank Corrado De Concini, Andrea Maffei and Paolo Bravi for many useful comments. I am really grateful to the reviewers for their precise and useful comments. A special thank goes to my PhD advisor Giovanni Cerulli Irelli for many discussions about this problem and his precious help in editing this paper.

\section{Symmetric nilpotent orbits}
\label{sec:sno}
In this section we introduce settings and notations for the objects studied in the paper. We follow \cite{kraft1982geometry} and \cite{kostant1971orbits}.

Let $V$ be a vector space over $\C$ together with a symmetric non-degenerate bilinear form $(-,-)$. Even if in this paper we will only be concerned with the orthogonal group $O(V)$, we will denote the isometry group with respect to $(-,-)$ as $G(V) := O(V)$ in order to keep the notation as close as possible with \cites{kraft1982geometry, ohta1986singularities}. Let $\gl(V)$ be the space of linear endomorphisms of $V$. Clearly $G(V)$ acts on $\gl(V)$ by conjugation. We denote by $D^*$ the adjoint of an endomorphism $D$ with respect to $(-,-)$. The endomorphism $\theta: D \mapsto -D^*$ of $\gl(V)$ is involutive, therefore we have a decomposition into eigenspaces for $\theta$ as follows:
\[
    \gl(V) = \kl(V) \oplus \p(V),
\]
where $\kl(V) = \{D: D = \theta(D) = -D^*\}$ is the Lie algebra of $G(V)$. Moreover the action of $G(V)$ leaves both $\kl(V)$, $\p(V)$ stable.

In this paper we are concerned with the study of nilpotent orbits of $G(V)$ in $\p(V)$. It is well known (\cite{sekiguchi1984nilpotent}*{Sec. 3.1}, \cite{ohta1986singularities}*{Sec. 0}) that the nilpotent orbits in $\p(V)$ are completely determined by their Jordan form and every partition of $n:=\dim V$ realizes a non-empty nilpotent orbit. We denote by $P(n)$ the set of partitions of $n$ and by $C_\lambda \subseteq \p(V)$ the nilpotent orbit corresponding to $\lambda$.

Let $\lambda$ be a partition. We set $|\lambda| := n$ if $\lambda \in P(n)$ and we denote the dual partition of $\lambda$ by $\widehat \lambda$. We frequently identify a partition with its Young diagram. This means that, if $\lambda = (\lambda_1, \dots, \lambda_h)$, $|\lambda| = \lambda_1+\dots+\lambda_h$, $\lambda_i$ are the rows of $\lambda$ and if $\widehat \lambda = (\widehat\lambda_1, \dots, \widehat\lambda_t)$, $\widehat\lambda_j$ are the columns of $\lambda$.

We recall the dimension formula for the orbit $C_\lambda$ from \cite{ohta1986singularities}*{Remark 8}:
\begin{equation}
    \label{equ:sno:dim}
    \dim C_\lambda = \frac 1 2 \left(n^2 - \sum_{i=1}^t {\widehat\lambda}_i^{\,2}\right).
\end{equation}

As the nilpotent cone of $\p(V)$ is $G(V)$-stable with only finitely many orbits, we have that orbit closure $\overline{C_\lambda}$ is $G(V)$-stable, and the complement $\overline{C_\lambda} \setminus C_\lambda$ is a disjoint union of finitely many orbits. The relation $C_\mu \subseteq \overline{C_\lambda}$ produces a partial order on the partitions, called \textit{dominance order} and denoted by $\mu \leq \lambda$, and it is given by
\[
    \sum_{i=1}^j \lambda_i \geq \sum_{i=1}^j \mu_i\quad \forall j \in \{1, \dots, h\}
\]
or, equivalently,
\[
    \sum_{k>j} \widehat\lambda_k \geq \sum_{k>j} \widehat\mu_k \quad \forall j \in \{1, \dots, t\}.
\]

We recall the notion of nilpotent pairs. We follow the same argument of \cite{kraft1982geometry}. Let $V$ (resp. $U$) be a finite dimensional vector space over $\C$ equipped with a non-degenerate symmetric bilinear form $( - , - )_V$ (resp. $( - , - )_U$). We denote $L(V,U) := \Hom_\C(V,U)$, $L(V) := L(V,V)$ and we define $L_{V,U} := L(V,U) \times L(U,V)$. We can interpret $L_{V,U}$ as the representation variety of the quiver
\[
    Q = \begin{tikzcd}
1 \arrow[r,bend left] & \arrow[l,bend left] 2
\end{tikzcd}
\]
with dimension vector $\underline{n}=(\dim V, \dim U)$. This means that
\[
L_{V,U} = \left\{
\begin{tikzcd}
V \arrow[r,bend left, "A"] & \arrow[l,bend left, "B"] U
\end{tikzcd}
: A \in L(V,U), B\in L(U,V)
\right\}.
\]
The group $G(V)\times G(U)$ acts  on $L_{V,U}$ by change of basis.

For every $A \in L(V,U)$ we define the adjoint map $A^* \in L(U,V)$ as the unique map such that
\begin{equation}
    \label{equ:sno:adj}
    ( Av , u )_U = ( v , A^*u )_V
\end{equation}
for all $v \in V$, $u \in U$.

\begin{lemma}
    \label{lem:sno:ranks}
    Let $A \in L(V,U)$ be a linear map. Then
    \[
        \rk A = \rk A^*.
    \]
\end{lemma}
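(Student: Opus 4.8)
The plan is to relate the rank of $A^*$ directly to that of $A$ via orthogonal complements, using nothing more than the non-degeneracy of the two forms together with rank--nullity; no choice of bases is needed. First I would identify the kernel of $A^*$. Fix $u \in U$. Because $(-,-)_V$ is non-degenerate, $A^* u = 0$ holds if and only if $(v, A^* u)_V = 0$ for every $v \in V$; and by the defining relation \eqref{equ:sno:adj} this is in turn equivalent to $(Av, u)_U = 0$ for every $v \in V$, i.e. to $u$ being orthogonal to $\operatorname{im} A$ with respect to $(-,-)_U$. Hence
\[
    \ker A^* = (\operatorname{im} A)^{\perp},
\]
where the orthogonal complement is taken inside $U$.

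Next I would apply the standard dimension formula for orthogonal complements with respect to a non-degenerate form: since $(-,-)_U$ is non-degenerate, $\dim W^{\perp} = \dim U - \dim W$ for any subspace $W \subseteq U$. Taking $W = \operatorname{im} A$ gives $\dim \ker A^* = \dim U - \rk A$. Finally, rank--nullity applied to $A^* \colon U \to V$ yields $\rk A^* = \dim U - \dim \ker A^* = \rk A$, which is exactly the asserted equality.

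This argument is short and its only delicate points are precisely where the two hypotheses enter, so that is where I would be most careful: non-degeneracy of $(-,-)_V$ is what converts the condition $A^* u = 0$ into an orthogonality statement about $\operatorname{im} A$, while non-degeneracy of $(-,-)_U$ is what validates the complement-dimension formula $\dim W^{\perp} = \dim U - \dim W$. There is no real obstacle beyond this bookkeeping. An equivalent route, should one prefer a more structural formulation, is to observe that the musical isomorphisms $V \xrightarrow{\sim} V^*$ and $U \xrightarrow{\sim} U^*$ induced by the two non-degenerate forms intertwine $A^*$ with the ordinary transpose $A^{t} \colon U^* \to V^*$, whence $\rk A^* = \rk A^{t} = \rk A$; but the orthogonal-complement computation above is the cleanest and most self-contained, and it is the version I would write out.
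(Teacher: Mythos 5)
Your proof is correct and follows essentially the same route as the paper: the paper's one-line proof simply notes that \eqref{equ:sno:adj} gives $\ker A \bot \operatorname{Im} A^*$ and $\ker A^* \bot \operatorname{Im} A$, which is exactly the orthogonal-complement identification you establish, and your write-up fills in the remaining non-degeneracy and rank--nullity bookkeeping that the paper leaves implicit.
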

\begin{proof}
    By (\ref{equ:sno:adj}), we get that $\ker A \bot \operatorname{Im} A^*$ and $\ker A^* \bot \operatorname{Im} A$.
%     Therefore $\operatorname{Im} A^* \subseteq {\ker A}^\bot$ so, as $(-,-)_V$ is non-degenerate, we have $\rk A^* \leq \codim \ker A = \rk A$. Using the symmetry of $(-,-)_V$ and $(-,-)_U$ one also obtains $\rk A \leq \rk A^*$.
\Qed\end{proof}

A pair $(A,B) \in L_{V,U}$ is nilpotent if the endomorphism $AB$ (or equivalently $BA$) is nilpotent. A pair $(A,B) \in L_{V,U}$ is symmetric if $B = A^*$. We define $N_{V,U}$ as the cone in $L_{V,U}$ of the symmetric nilpotent pairs.

As in \cite{kraft1982geometry}, we have maps
\[
\begin{tikzcd}
L_{V,U} \arrow[r,"\pi"]\arrow[d,swap,"\rho"] & L(U)\\
L(V)
\end{tikzcd}
\]
defined by $\pi(A,B) = AB$, $\rho(A,B) = BA$. We can restrict $\pi,\rho$ to the subspace of symmetric pairs, i. e. the image of $L(V,U)\rightarrow L_{V,U}$, $A \mapsto (A,A^*)$:
\[
\begin{tikzcd}
L(V,U) \arrow[r,"\pi"]\arrow[d,swap,"\rho"] & \p(U)\\
\p(V).
\end{tikzcd}
\]

The subvariety $N_{V,U} \subseteq L_{V,U}$ is $G(V)\times G(U)$-stable. We call \emph{nilpotent symmetric orbit} each of the $G(V)\times G(U)$-orbits in $N_{V,U}$. We recall \cite{kraft1979closures}*{Sec. 4.2} that an $ab$-diagram is a list of $ab$-strings i.e. strings with letters $a$ and $b$ occurring on alternate positions. As shown in \cite{kraft1979closures}*{Sec. 4.3}, to each nilpotent pair $(A, B) \in L_{V,U}$ corresponds an $ab$-diagram which determines its $GL(V)\times GL(U)$-orbit completely; i. e. the $GL(V)\times GL(U)$-orbits correspond one to one to the $ab$-diagrams having $\dim V$ $a$'s and $\dim U$ $b$'s. The following picture illustrates how to associate an ab-diagram $\delta=\delta_{(A,B)}$ to a nilpotent pair $(A,B)$:
\[
    \begin{array}{cc}
        \begin{tikzcd}[row sep=1pt]
            v_1 \arrow[dr, "A"]\\
            & u_1 \arrow[dl, "B"']\\
            v_2 \arrow[dr, "A"]\\
            & u_2 \arrow[dl, "B"']\\
            v_3\\
            & u_3 \arrow[dl, "B"']\\
            v_4\\
            & u_4\\
            v_5
        \end{tikzcd}
        &
        \qquad\delta_{(A,B)} =
            \begin{array}{l}
                ababa\\
                ba\\
                b\\
                a
            \end{array},
    \end{array}
\]
where $\{v_1,\cdots, v_5\}$ is a suitable basis of $V$ and $\{u_1,\cdots, u_4\}$ is a suitable basis of $U$.

If $\delta$ is the $ab$-diagram of an element $p \in N_{V,U}$, we can retrieve the Young diagram of $\pi(p)$ (resp. $\rho(p)$) by suppressing the $a$'s (resp. the $b$'s) from $\delta$. We denote by $\pi(\delta)$ (resp. $\rho(\delta)$) the Young diagram obtained in this manner.

Let $(A,B)\in L_{V,U}$ be a nilpotent pair with $ab$-diagram $\delta$. For every $h\geq 1$, we want to infer the rank of the linear map $(AB)^{h-1}A:V\rightarrow V$ (resp. $(BA)^{h-1}B:U\rightarrow U$) from $\delta$. If $\gamma = c_1 \cdots c_\ell$ is a string, for any $1\leq i\leq j \leq \ell$, we call $c_i\cdots c_j$ a \emph{substring} of $\gamma$. A \emph{substring} of $\delta$ is a substring of one of its $ab$-strings. For example, for $h=1$, the rank of $A$ equals the number of occurrences of the letter $a$ which are not in the last position of a string; this is hence the number of occurrences of the sub-string $(ab)=(ab)^{h}$ in $\delta$. With this in mind we prove the following lemma.

\begin{lemma}\label{lem:sno:rankaba}
    Let $(A, B)$ be a nilpotent pair with $ab$-diagram $\delta$. For every $h\geq 1$, the $\rk((AB)^{h-1}A)$ (resp. $\rk((BA)^{h-1}B)$) equals the number of occurrences of the substring $(ab)^h$ (resp. $(ba)^h$) in $\delta$.
\end{lemma}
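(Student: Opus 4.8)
The plan is to understand how the linear maps built from $A$ and $B$ act on the distinguished bases $\{v_i\}$ and $\{u_j\}$ that underlie the $ab$-diagram $\delta$, and then to count ranks purely combinatorially. Recall that the $ab$-diagram is obtained precisely by recording, for each basis vector, whether $A$ or $B$ sends it to the next basis vector or to zero: a letter $a$ in position $k$ of a string corresponds to a basis vector $v_i$ with $Av_i$ equal to the next vector $u_j$ in that string (if the $a$ is not last) or to $0$ (if it is last), and symmetrically for $b$. Thus $A$ maps $v_i$ to the basis vector immediately following it in its string whenever that letter $a$ is followed by a letter $b$, and to $0$ otherwise; likewise $B$ maps $u_j$ to the following basis vector whenever that letter $b$ is followed by a letter $a$, and to $0$ otherwise.

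First I would make this precise by describing the composite map $(AB)^{h-1}A$ on a basis vector. Starting from a vector $v_i$ corresponding to an $a$, applying $A$ then $B$ then $A$ and so on, the image is again a basis vector (the one reached by walking $2h-1$ steps forward along the string) if all the intermediate letters are present, and is $0$ the moment the walk runs off the end of the string. Concretely, $(AB)^{h-1}A$ sends the basis vector at an $a$-position to the basis vector $2h-1$ positions later in the same string, provided that position still lies within the string; otherwise it sends it to $0$. The nonzero images are then linearly independent, since distinct starting vectors land on distinct basis vectors, so the rank of $(AB)^{h-1}A$ equals the number of $a$-positions from which a forward walk of length $2h-1$ stays inside its string.

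The key step is then to recognise that an $a$-position admits such a forward walk of $2h-1$ letters exactly when it is the starting letter of a substring of the form $(ab)^h$, that is, an $a$ followed by $b,a,b,\dots$ for a total of $2h$ letters $abab\cdots ab$. Indeed a walk of $2h-1$ forward steps visits $2h$ consecutive letters, and since letters strictly alternate between $a$ and $b$ within each string, starting at an $a$ these letters read $ababab\cdots$ ending in $b$, which is exactly one copy of the substring $(ab)^h$. Hence the $a$-positions contributing to the rank are in bijection with the occurrences of $(ab)^h$ in $\delta$, giving $\rk((AB)^{h-1}A) = \#\{\text{occurrences of } (ab)^h\}$. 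The case $h=1$ recovers the count of $(ab)$-occurrences already observed in the text, serving as a base check. The statement for $(BA)^{h-1}B$ and $(ba)^h$ follows by the symmetric argument with the roles of $A,B$ and $V,U$ interchanged, or directly by applying the result to the pair $(B,A)$.

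The main obstacle I anticipate is not conceptual but bookkeeping: one must argue cleanly that the nonzero images form a linearly independent set, so that the rank equals the number of nonvanishing basis images rather than merely bounding it. This is where the fact that the diagram comes from an actual basis, with each map sending a basis vector either to a single next basis vector or to zero, is essential—it guarantees that the images $(AB)^{h-1}A(v_i)$ lying in the basis are pairwise distinct (two different $a$-positions cannot reach the same position $2h-1$ steps ahead without coinciding), hence independent. Making this bijection between qualifying $a$-positions and substrings $(ab)^h$ watertight, and checking that the alternation of letters forces the intermediate walk to succeed or fail exactly as described, is the only point requiring care.
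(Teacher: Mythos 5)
Your proposal is correct and follows essentially the same argument as the paper: pick the basis of $V$ and $U$ adapted to the $ab$-diagram, observe that $(AB)^{h-1}A$ shifts each basis vector $2h-1$ positions forward along its string (or kills it), and identify the surviving $a$-positions with occurrences of the substring $(ab)^h$. The only difference is cosmetic --- you spell out the linear-independence of the nonzero images, which the paper leaves implicit since those images are distinct basis vectors.
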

\begin{proof}
    The $ab$-diagram $\delta$ suggests the choice of basis of $V$ and $U$ for which the behaviour of $A$ and $B$ is straightforward. For each $ab$-string $\delta_i=c_{i1}c_{i2}\dots c_{i\ell_i}$ of $\delta$ (where $c_{ij} \in \{a, b\}$), for each $j=1,\dots,\ell_i$, we pick vectors $v_{ij} \in V$ (resp. $u_{ij} \in U$) if $c_{ij} = a$ (resp. $c_{ij} = b$), with the following properties: $\{v_{ij}\}_{i,j}$ (resp. $\{u_{ij}\}_{i,j}$) is a basis of $V$ (resp. $U$) and $Av_{ij} = u_{i,{j+1}}$ (resp. $Bu_{ij} = v_{i,j+1}$) if $j+1 \leq \ell_i$ or $Av_{i\ell_i}=0$ (resp. $Bu_{i\ell_i}=0$).
    
    Notice that the map $(AB)^{h-1}A$ carries $v_{ij}$ to $u_{i(j+2h-1)}$ if $j+2h-1\leq \ell_i$ or to $0$ otherwise. Therefore, $\rk((AB)^{h-1}A)$ equals the number of pairs $(i,j)$ such that $c_{ij} = a$ and $j+2h-1\leq \ell_i$. Moreover, every such pair $(i,j)$ corresponds one to one to the substring $c_{ij}\dots c_{i(j+2h-1)}$ of $\delta$, which is the substring $(ab)^h$. A similar argument computes $\rk((BA)^{h-1}B)$.
    \Qed
\end{proof}

\section{Ortho-symmetric \texorpdfstring{$ab$}{ab}-diagrams}
\label{sec:ortoab}
In this section we give a combinatorial description of the $ab$-diagrams of the elements of $N_{V,U}$.

Let $X \in N_{V,U}$ be a nilpotent symmetric pair, let $G = G(V) \times G(U)$ and let $G.X$ be its orbit in $N_{V,U}$. We also consider $G' = \GL(V) \times \GL(U)$ acting on $L_{V,U}$, and the orbit $G'.X \subseteq L_{V,U}$.

The automorphism $\sigma$ of $G'$ defined by
\[
    (g_1, g_2)^\sigma = ((g_1^*)^{-1},(g_2^*)^{-1})
\]
has $G$ as the set of fixed points. The automorphism (still denoted by $\sigma$) of $L_{V,U}$ defined by
\[
    \sigma(A,B) = (B^*, A^*)
\]
has the symmetric pairs as the set of fixed points. It is straightforward to check that all hypothesis of \cite{magyar2000symplectic}*{Prop. 2.1} hold. Thus we get that:
\[
    G'.X \cap N_{V,U} = G.X.
\]

Let $\delta$ be an $ab$-diagram associated to an orbit $G'.Y$. We say that $\delta$ is \textit{ortho-symmetric} if
\[
    G'.Y \cap N_{V,U} \neq \emptyset.
\]

We are left with the following problem: which $ab$-diagrams are \textit{ortho-symmetric}?

For any two pairs $(A, B) \in L_{V,U}$, $(A', B') \in L_{V', U'}$, the direct sum $(A, B)\oplus (A', B') = (A \oplus A', B \oplus B')$ is defined as an element in $L_{V\oplus V', U\oplus U'}$. If $\delta$ is the $ab$-diagram of $(A,B)$ and $\delta'$ is the $ab$-diagram of $(A',B')$, the $ab$-diagram of the direct sum $(A,B)\oplus(A',B')$ is the disjoint union of $\delta$ and $\delta'$.

We call a symmetric pair $(A, A^*)$ indecomposable if it can not be written as a direct sum of two nontrivial symmetric pairs. We claim that the $ab$-diagrams of indecomposable nilpotent symmetric pairs are given in \cref{tab:ortoab:indec}. This table closely follows table II in \cite{kraft1982geometry}*{6.3}.

\begin{table}[hbt]
\begin{center}
    \begin{tabular}{l c c c}
        type & $\alpha_k$ & $\beta_k$ & $\varepsilon_k$ \\
        \hline
        & $(ab)^ka = abab\cdots a$ & $(ba)^kb = baba\cdots b$ &
        $\begin{array}{l}
            (ab)^k = abab\cdots b \\
            (ba)^k = baba\cdots a \\
            \end{array}$ \\
        \hline
        \#$a$ & $k+1$ & $k$ & $2k$ \\
        \#$b$ & $k$ & $k+1$ & $2k$ \\
        \hline
        & $k \geq 0$ & $k \geq 0$ & $k \geq 1$
    \end{tabular}
\end{center}
\caption{
    \label{tab:ortoab:indec}
    Indecomposable ortho-symmetric $ab$-diagrams.
}
\end{table}

\begin{proposition}
    An $ab$-diagram $\delta$ is ortho-symmetric if and only if it is a disjoint union of finitely many $ab$-diagrams from  \cref{tab:ortoab:indec}.
\end{proposition}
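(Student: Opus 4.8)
The plan is to reduce the proposition to a classification of the indecomposable nilpotent symmetric pairs, relying on the fact that the orthogonal direct sum of two symmetric pairs is again symmetric and that its $ab$-diagram is the disjoint union of the two summand diagrams. For the \emph{if} direction it then suffices to realize each diagram of \cref{tab:ortoab:indec} by a genuine symmetric pair $(A,A^*)$: a disjoint union of such diagrams is automatically the $ab$-diagram of the orthogonal direct sum of the corresponding pairs, hence ortho-symmetric. I would produce these pairs explicitly: for $\alpha_k$ and $\beta_k$ one equips the single indecomposable representation with symmetric non-degenerate forms on $V$ and $U$ distributed along the string, while for $\varepsilon_k$ one glues the representations of $(ab)^k$ and $(ba)^k$ by the hyperbolic form that forces $B=A^*$. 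The cases of length one and two computed by hand already display the mechanism, and the general construction is a routine verification of the adjunction $(Av,u)_U=(v,A^*u)_V$ from \eqref{equ:sno:adj}.

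For the \emph{only if} direction I would first show that every nilpotent symmetric pair splits as an orthogonal direct sum of indecomposable symmetric pairs, thereby reducing to the indecomposable case. Forgetting the forms, the underlying representation $M=(A,B)$ of $Q$ decomposes into indecomposables, i.e.\ into string modules, whose isomorphism classes are the $ab$-strings by \cite{kraft1979closures}*{Sec. 4.3}. Since $(A,A^*)$ is a fixed point of $\sigma$, the multiset of strings of $M$ is stable under the induced involution on strings, and one peels off indecomposable symmetric summands one at a time: a self-dual string on which the restricted form is non-degenerate yields an orthogonal summand directly, while a non-self-dual string, which is necessarily isotropic, is split off together with the summand $\sigma$ sends it to, as a hyperbolic piece. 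Non-degeneracy of the form on the complement keeps the induction going.

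It remains to identify the indecomposable symmetric pairs, and here the key point is that $\sigma$ acts on $ab$-strings by \emph{reversal}; this is already visible in the length-two computation, where the pair realizing $ab$ is sent by $\sigma$ to the one realizing $ba$, and I would check it in general by tracking how the adjunction reverses a chain of basis vectors. The alternating strings fixed by reversal are exactly the odd-length palindromes $\alpha_k=(ab)^ka$ and $\beta_k=(ba)^kb$, while an even-length string is not a palindrome and reversal interchanges $(ab)^k$ with $(ba)^k$. A self-dual string carries a compatible pair of non-degenerate symmetric forms and so gives the indecomposable $\alpha_k$ or $\beta_k$; a non-self-dual string $\gamma$ cannot carry such forms on its own, because a symmetric-pair structure on $R_\gamma$ would force $R_\gamma\cong R_{\sigma(\gamma)}$ and hence $\sigma(\gamma)=\gamma$, so it occurs only in the hyperbolic pairing of $R_\gamma$ with $R_{\sigma(\gamma)}$, which is $\varepsilon_k$.

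The step I expect to be the main obstacle is establishing, for a self-dual string, both the existence and the uniqueness of the compatible forms: one must show that the perfect pairing coming from $R_\gamma\cong R_{\sigma(\gamma)}$ can be realized by \emph{symmetric} non-degenerate forms on both $V$ and $U$, so that no symplectic alternative is forced, and that any two such choices differ by an automorphism of $R_\gamma$, giving a single orbit. The availability of two independent forms, on $V$ and on $U$, is exactly what makes every palindrome admissible here and is the reason the present table differs from the orthogonal/symplectic classification of \cite{kraft1982geometry}*{6.3}; I would settle it by computing the endomorphism ring of $R_\gamma$ with its induced involution and invoking that over $\C$ a non-degenerate symmetric form on such a module is determined up to isomorphism.
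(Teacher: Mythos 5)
Your proposal is correct in outline, but it takes a genuinely different route from the paper's proof. The paper never decomposes a symmetric pair into indecomposable symmetric pairs. Instead it extracts a numerical invariant: by \cref{lem:sno:ranks} applied to the pair $((AA^*)^{h-1}A,(A^*A)^{h-1}A^*)$, together with \cref{lem:sno:rankaba}, the $ab$-diagram of any nilpotent symmetric pair has, for every $h\geq 1$, as many occurrences of $(ab)^h$ as of $(ba)^h$ (the paper's property $\mathcal P$); a short combinatorial induction (peel off a string of maximal length, using property $\mathcal P$ to find its even-length partner when needed) then shows that any diagram with property $\mathcal P$ is a disjoint union of entries of \cref{tab:ortoab:indec}. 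Your approach is instead the structural one in the spirit of \cite{kraft1982geometry}*{Sec. 5--6}: orthogonal Krull--Schmidt for self-dual representations, duality acting on strings by reversal, palindromes carrying symmetric structures, non-palindromes occurring only hyperbolically. What the paper's route buys is economy: it is entirely elementary and sidesteps both the orthogonal splitting theorem and the existence/uniqueness of forms on self-dual strings (your flagged ``main obstacle''); note that this obstacle is in any case only needed for the \emph{if} direction, where explicit constructions -- the paper's factorization of a symmetric nilpotent endomorphism through its image, or your anti-diagonal forms along the string -- already settle existence, and uniqueness is never required for the statement as it only concerns which $\GL(V)\times\GL(U)$-orbits meet $N_{V,U}$. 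What your route buys is stronger structural information: an actual orthogonal decomposition of every nilpotent symmetric pair and a classification of the indecomposable symmetric pairs, neither of which follows from the paper's counting argument. The \emph{if} directions of the two arguments essentially coincide.

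One loose end in your peeling step: your dichotomy (self-dual string with non-degenerate restricted form, versus non-self-dual string paired hyperbolically) is not exhaustive. A self-dual string summand can pair degenerately with itself, for instance when the form couples it to another copy of the same string type; then it cannot be split off on its own and must, like the non-self-dual ones, be paired with a summand isomorphic to its dual, which exists by non-degeneracy of the total form. This does not break your argument -- the resulting piece has diagram $\gamma\sqcup\gamma$ with $\gamma$ a palindrome, still a union of entries of \cref{tab:ortoab:indec} -- but the induction must be stated with this third case, or one should invoke a decomposition theorem for forms in additive categories (Quebbemann--Scharlau--Schulte type) outright.
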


\begin{proof}
    We say that an $ab$-diagram $\eta$ has \emph{property $\mathcal P$} if, for every $h \geq 1$, the number of occurrences of $(ab)^h$ in $\eta$ equals the number of occurrences of $(ba)^h$ in $\eta$.

    Every $ab$-diagrams $\alpha_k, \beta_k, \varepsilon_k$ in \cref{tab:ortoab:indec} has property $\mathcal P$. Indeed in the diagrams $\alpha_k$ and $\beta_k$, for every $h \geq 1$, there are exactly $k-h+1$ occurrences of $(ab)^h$ and of $(ba)^h$. The diagram $\varepsilon_k$ has property $\mathcal P$ by symmetry of its two $ab$-strings.

    Let $(A, A^*) \in N_{V,U}$ be a nilpotent symmetric pair with $ab$-diagram $\delta$, an ortho-symmetric $ab$-diagram. For every positive integer $h$, we notice that the pair $((AA^*)^{h-1}A, (A^*A)^{h-1}A^*)$ is nilpotent symmetric. By \cref{lem:sno:ranks}, we have:
    \[
        \rk (AA^*)^{h-1} A = \rk (A^*A)^{h-1} A^*,
    \]
    which, by \cref{lem:sno:rankaba}, implies that $\delta$ has property $\mathcal P$.    
    
    We claim that an $ab$-diagram $\delta$ with property $\mathcal P$ is a disjoint union of finitely many $ab$-diagrams from \cref{tab:ortoab:indec}. We proceed by contradiction: suppose that there exist $ab$-diagrams with property $\mathcal P$ which are not disjoint union of finitely many $ab$-diagrams from \cref{tab:ortoab:indec} and take $\delta$ one with the least number of $ab$-strings. Let $s$ be one of the $ab$-strings in $\delta$ with maximal length and let $l$ be its length.
    
    If $l = 2k+1$ is odd, $s = \alpha_k$ or $s = \beta_k$. Let $\delta'$ be the $ab$-diagram such that $\delta = s \sqcup \delta'$ (we cannot have $\delta = s$, otherwise $\delta$ is already in \cref{tab:ortoab:indec}).
    
    If $l = 2k$ is even, without loss of generality, we assume that $s$ starts with $a$. Then, in $s$ there exists exactly one occurrence of $(ab)^k$ and no occurrence of $(ba)^k$. As $\delta$ satisfies property $\mathcal P$, by maximality of $l$, there must be an $ab$-string $s'$ in $\delta$ of length $l=2k$ starting with $b$. Thus $s \sqcup s' = \varepsilon_k$, so let $\delta'$ be the $ab$-diagram such that $\delta = s \sqcup s' \sqcup \delta'$ (again, it cannot be empty).
    
    In both cases, $\delta$ and $\delta \setminus \delta'$ have property $\mathcal P$, therefore also $\delta'$ has property $\mathcal P$. By minimality of $\delta$, $\delta'$ is disjoint union of finitely many $ab$-diagrams from \cref{tab:ortoab:indec}, thus so is $\delta$. This contradicts the existence of such a $\delta$ and concludes the only if part of the proposition.

    On the other hand, we show that every $ab$-diagram in \cref{tab:ortoab:indec} is ortho-symmetric. In order to show this, we just need to construct a symmetric pair $(A, A^*)$ associated to each diagram $\alpha_k$, $\beta_k$, $\varepsilon_k$.

    \begin{description}
        \item[$\alpha_k$:] Let $D: \C^{k+1} \rightarrow \C^{k+1}$ be a symmetric nilpotent endomorphism of nilpotent order exactly $k+1$. Let $D = I \circ X$ be the canonical decomposition of the map $D$ through its image $D(\C^{k+1})$, so that $X: \C^{k+1} \rightarrow D(\C^{k+1})$ and $I: D(\C^{k+1}) \hookrightarrow \C^{k+1}$. Then $D$ induces a non-degenerate bilinear symmetric form in $D(\C^{k+1})$, as shown in \cite{kraft1982geometry}*{4}, and $X = I^*$. As $\dim D(\C^{k+1}) = k$ and both $X, I$ have maximal rank, we immediately get that the $ab$-diagram of the symmetric nilpotent pair $(X, I)$ is $\alpha_k$.
        \item[$\beta_k$:] Let $(A, A^*)$ be a pair with $ab$-diagram of type $\alpha_k$. Then $(A^*, A)$ is a pair with $ab$-diagram of type $\beta_k$.
        \item[$\varepsilon_k$:] Let $(A, B)$ be a nilpotent (not symmetric) pair between the spaces $(V, U) = (\C^k, \C^k)$ with $ab$-diagram $ab\cdots b$. Let $(B^*, A^*)$ be the linear dual of $(A, B)$, so that it is a nilpotent pair between the dual spaces $(V^*, U^*)$ with $ab$-diagram $ba\cdots a$. Then we can equip $V \oplus V^*$ and $U \oplus U^*$ with the non-degenerate symmetric bilinear form given by the duality pairing. Therefore $(A, B) \oplus (B^*, A^*)$ is a symmetric nilpotent pair and its $ab$-diagram has type $\varepsilon_k$.\Qed
    \end{description}
\end{proof}

\section{The variety \texorpdfstring{$Z$}{Z}}
\label{sec:zvar}

For each partition $\lambda$ of $n$ we are going to construct a variety $Z^{(\lambda)}$ (or just $Z$, if the partition $\lambda$ is clear from the context) with the following property: there exists a quotient map $Z \rightarrow \overline{C_\lambda}$. In this way, we will be able to assert some properties of $\overline{C_\lambda}$ by proving them for the variety $Z$. The construction of $Z$ is analogous to the one carried out in \cite{kraft1982geometry} and it has already been described in \cite{ohta1986singularities}; here we just recall its definition.

Let $\lambda \in P(n)$ be a partition and let $t = \lambda_1$ be the number of columns of $\lambda$. As in \cite{kraft1982geometry}, we define a variety $Z$ using the representations of the quiver:
\[
Q_t =
\begin{tikzcd}
0 \arrow[r,bend left,"x"] & \arrow[l,bend left,"y"] 1 \arrow[r,bend left,"x"] & \arrow[l,bend left,"y"] \cdot\cdot \arrow[r,bend left,"x"] & \arrow[l,bend left,"y"] t.
\end{tikzcd}
\]
Let
\[
    n_i = \sum_{j>i}^t \widehat\lambda_j
\]
so that $n_0 = n$ and $n_t=0$. We fix the dimension vector $\underline{n} = (n_0, \dots, n_t)$ and vector spaces $V_i$ such that $\dim V_i = n_i$. We also fix a symmetric bilinear non-degenerate form on each $V_i$.

The variety $Z$ consists of the representations of dimension vector $\underline{n}$ of the quiver $Q_t$ with relations:
\begin{itemize}
    \item $xy = yx$;
    \item $y = x^*$ (the adjoint map as in (\ref{equ:sno:adj})).
\end{itemize}
Therefore, each point $z\in Z$ is a sequence of maps
\[
    (A_1, B_1; A_2, B_2; \dots; A_t, B_t)
\]
where $A_i \in L(V_{i-1}, V_i)$, $B_i \in L(V_i, V_{i-1})$, $A_iB_i = B_{i+1}A_{i+1}$ and $B_i = A_i^*$ for each $i$. As each $B_i$ is adjoint to $A_i$, we get an inclusion
\[
    Z \hookrightarrow L(V_0, V_1) \times \dots \times L(V_{t-1}, V_t)
\]
mapping
\[
    (A_1, B_1; A_2, B_2; \dots; A_t, B_t) \mapsto (A_1, \dots, A_t).
\]

As in \cite{kraft1982geometry}, we can think of $Z$ as a schematic fiber in the following way. Let
\begin{align*}
    M &:= L(V_0, V_1) \times \dots \times L(V_{t-1},V_t),\\
    N &:= \p(V_1) \times \dots \times \p(V_{t-1}),\\
    \Phi &: M \rightarrow N\\
    & (A_1, \dots, A_t) \mapsto (A_1A_1^*-A_2^*A_2, \dots, A_{t-1}A_{t-1}^* - A_t^*A_t);
\end{align*}
then we put $Z := \Phi^{-1}(0)$. As $M$ is an affine space, this interpretation will be useful in \cref{sec:ci} to show that $Z$ is a complete intersection variety (at least for some $\lambda$).

As in \cite{kraft1982geometry}, let $G = G(V_0) \times \dots \times G(V_t)$ be the group which acts on $Z$ by change of basis and $H = G(V_1) \times \dots \times G(V_t) \subseteq G$. We have a map
\begin{align*}
    \Theta:& Z \rightarrow \overline{C_\lambda}\\
    &(A_1, B_1; \dots; A_t, B_t) \mapsto B_1A_1.
\end{align*}
By the same argument as in \cite{kraft1982geometry}, we have the following:
\begin{proposition}
    The map $\Theta$ splits through the quotient by the group $H$ as
    \[
        \Theta: Z \longrightarrow Z/H \tilde\longrightarrow \overline{C_\lambda};
    \]
    moreover $\Theta$ is $G(V_0) = G/H$-equivariant.
\end{proposition}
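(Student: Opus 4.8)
The plan is to follow the argument of Kraft and Procesi in \cite{kraft1982geometry} essentially verbatim, adapting it to the orthogonal symmetric setting, since the statement is exactly the analogue of their proposition. The map $\Theta$ sends $(A_1,B_1;\dots;A_t,B_t)$ to $B_1 A_1 = A_1^* A_1 \in \p(V_0)$, and I must show two things: first, that $\Theta$ is constant on $H$-orbits and that the induced map on $Z/H$ is an isomorphism onto $\overline{C_\lambda}$; second, that $\Theta$ is equivariant for the residual $G(V_0)=G/H$ action. The equivariance is the easy part: an element $g_0 \in G(V_0)$ acts on a point of $Z$ by $A_1 \mapsto A_1 g_0^{-1}$ (with the other $A_i$ unchanged, or more precisely by the change-of-basis action on $V_0$), so $A_1^* A_1 \mapsto (g_0^{-1})^* A_1^* A_1 g_0^{-1} = g_0 (A_1^* A_1) g_0^{-1}$ because $g_0 \in G(V_0)$ satisfies $(g_0^{-1})^* = g_0$; this is precisely conjugation by $g_0$ on $\p(V_0)$, so $\Theta$ intertwines the $G(V_0)$-actions.

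First I would check that $\Theta$ is $H$-invariant. Since $H = G(V_1)\times\dots\times G(V_t)$ acts only on the spaces $V_1,\dots,V_t$ and leaves $V_0$ fixed, and the target $B_1 A_1$ is an endomorphism of $V_0$ built from $A_1$ and its adjoint, a direct computation shows that replacing $A_1$ by $h_1 A_1$ (for $h_1\in G(V_1)$) sends $A_1^*A_1$ to $A_1^* h_1^* h_1 A_1 = A_1^* A_1$ because $h_1^* h_1 = \mathrm{id}$. Hence $\Theta$ factors through $Z/H$. The content is then to show that the induced map $Z/H \to \overline{C_\lambda}$ is an isomorphism of varieties. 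I would establish this in the standard two stages: surjectivity onto $\overline{C_\lambda}$, and injectivity/bijectivity on the level of $H$-orbits together with smoothness of the quotient map identifying it as an isomorphism.

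The heart of the matter, and the step I expect to be the main obstacle, is identifying the image and the fibers. To see that the image lands in $\overline{C_\lambda}$ and is all of it, I would use the interpretation of a point of $Z$ as a nilpotent symmetric pair chain: the commutation relations $A_i B_i = B_{i+1} A_{i+1}$ together with $B_i = A_i^*$ force the partition data, and by the rank computations of \cref{lem:sno:rankaba} the Young diagram of $B_1 A_1 = A_1^* A_1$ is dominated by $\lambda$, with the generic (open) $H$-orbit hitting $C_\lambda$ itself; taking closures gives the image $\overline{C_\lambda}$. For the fibers, I would argue that two points of $Z$ with the same image $B_1 A_1$ lie in a single $H$-orbit by reconstructing the intermediate maps $A_2,\dots,A_t$ up to the $H$-action from the filtration of $V_0$ by images of powers of $B_1 A_1$, exactly as in \cite{kraft1982geometry}; the symmetric (orthogonal) constraint $B_i = A_i^*$ is compatible with this reconstruction because the forms on the $V_i$ are the ones induced by the nilpotent endomorphism as in the $\alpha_k$ construction of the previous section. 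The delicate point is checking that this identification is scheme-theoretic and that $Z/H$ is taken in the correct (geometric invariant theory) sense so that $\Theta$ is genuinely an isomorphism and not merely a bijection; here I would invoke the same normal-variety / good-quotient machinery that Kraft and Procesi set up, noting that the argument is formally identical once the symmetric forms are fixed on each $V_i$.
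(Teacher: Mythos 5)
Your computations of the $G(V_0)$-equivariance and of the $H$-invariance of $\Theta$ are correct, but the core of your argument --- that the induced map $Z/H \to \overline{C_\lambda}$ is an isomorphism because ``two points of $Z$ with the same image $B_1A_1$ lie in a single $H$-orbit'' --- contains a genuine error. In this proposition $Z/H$ denotes the \emph{categorical} (affine invariant-theoretic) quotient $\operatorname{Spec}\C[Z]^H$, not an orbit space, and the fibers of $\Theta$ are in general \emph{not} single $H$-orbits: a fiber of the quotient map by a reductive group contains a unique \emph{closed} orbit but typically many others. Concretely, already for $\lambda=(2,2)$ (so $n_0=4$, $n_1=2$, $V_2=0$, $Z=\{A_1 : A_1A_1^*=0\}$), the fiber $\Theta^{-1}(0)$ contains both the zero map and all rank-one maps $A_1$ with $A_1A_1^*=A_1^*A_1=0$; these lie in different strata $Z_\tau$ (different $ab$-diagrams), hence in different $H=G(V_1)$-orbits. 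More generally $\Theta^{-1}(C_\mu)$ is a union of all strata $Z_\tau$ with $\pi(\tau_1)=\mu$, and the whole point of the paper's later combinatorics is that there are many such $\tau$ for a given $\mu$; your reconstruction-of-the-chain argument cannot succeed, and no ``bijectivity plus smoothness'' route is available.

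The argument the paper actually invokes (via \cite{kraft1982geometry}*{5.2--5.3} and \cite{ohta1986singularities}) runs through classical invariant theory instead: by the first fundamental theorem for the orthogonal group, the $H$-invariants of $M = L(V_0,V_1)\times\dots\times L(V_{t-1},V_t)$ are generated by matrix entries of composites starting and ending at $V_0$, and on $Z$ the relations $A_iA_i^* = A_{i+1}^*A_{i+1}$ collapse all of these to polynomials in the entries of $A_1^*A_1 = \Theta$. Hence $\C[\p(V_0)] \to \C[Z]^H$ is surjective, so $Z/H \to \p(V_0)$ is a \emph{closed} embedding; this is also what guarantees the image of $\Theta$ is closed, a point your sketch leaves unaddressed (images of morphisms of affine varieties need not be closed). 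One then identifies this closed image with $\overline{C_\lambda}$ by your rank/genericity argument: it is $G(V_0)$-stable, contained in $\overline{C_\lambda}$ by the rank estimates, and contains $C_\lambda$ because the open stratum $Z_{\tau^0}$ maps onto it. If you want to repair your write-up, replace the orbit-identification step by this invariant-theoretic one; the rest of your proposal can stay.
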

(In particular, $\Theta^{-1}(C_\mu)$ is stable under the action of $G(V_0)$ for each partition $\mu \leq \lambda$.)

By restricting the quiver $Q_t$ to two consecutive vertices $i-1, i$, we get a map $Z \rightarrow N_{V_{i-1},V_i}$. As all the orbits in $N_{V_{i-1},V_i}$ are defined by their ortho-symmetric $ab$-diagram, we can map each point $z \in Z$ and $i = 1,\dots,t$ to its $i$-th $ab$-diagram $\tau_i$. Therefore, for each string of $ab$-diagrams
\[
    \tau = (\tau_1, \dots, \tau_t)
\]
we define
\[
    Z_\tau = \{z \in Z:\forall i,\text{ the $i$-th $ab$-diagram of $z$ is $\tau_i$ }\}.
\]
The following are necessary conditions on $\tau$ in order to have $Z_\tau \neq \emptyset$:
\begin{enumerate}
    \item $\tau_i$ must have exactly $n_{i-1}$ $a$'s and $n_i$ $b$'s,
    \item $\rho(\tau_{i}) = \pi(\tau_{i+1})$ for $i=1,\dots,t-1$,
    \item each $\tau_i$ must be ortho-symmetric.
\end{enumerate}
We will denote by $\Lambda^{(\lambda)}$ (or, shortly, by $\Lambda$) the set of all strings $\tau$ which verify the three conditions above.

As in \cite{kraft1979closures}*{Lemma 5.4}, \cite{kraft1982geometry}*{8.2}, $\Theta^{-1}(C_\lambda)$ is given by exactly one stratum $Z_{\tau^0}$. The string of $ab$-diagrams $\tau^0$ can be constructed in the following way: for all $i=0,\dots,t-1$, let $\lambda_i$ be the partition of $n_i$ given by the columns of $\lambda$ with indices greater than $i$; then let $\tau_{i+1}$ be the $ab$-diagram built by setting one $a$ on each box in the Young diagram of $\lambda_i$ and by setting one $b$ between each pair of consecutive $a$'s.

\begin{example}
    Let $\lambda = (3,1)$ be a partition. Then the string of $ab$-diagrams $\tau^0$ is the following:
    \[
    \tau^0 = (\tau^0_1, \tau^0_2, \tau^0_3) = \left(
        \begin{array}{l|l|l}
            ababa & aba & a \\
            a & &
        \end{array}
        \right).
    \]
\end{example}

The stratum $Z_{\tau^0}$ is open in $Z$ as it is the subvariety of the points
\[
    (A_1, B_1, \dots, A_t, B_t)
\]
in $Z$ where each $A_i$, $B_i$ has maximal rank. We have the following key result after the manner of Kraft, Procesi and Ohta.
\begin{proposition}[\cite{kraft1982geometry}*{5.5} and \cite{ohta1986singularities}*{Prop. 8}]
    \label{prop:zvar:zsmooth}
    The open set $Z_{\tau^0}$ is contained in the smooth locus of $Z$.
\end{proposition}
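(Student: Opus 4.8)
The plan is to exploit the description of $Z$ as the scheme-theoretic fibre $\Phi^{-1}(0)$ and to prove that $\Phi$ is a submersion at every point of $Z_{\tau^0}$. Since $M$ and $N$ are affine spaces, a point $z=(A_1,\dots,A_t)\in Z$ at which the differential $d\Phi_z\colon T_zM=M\to N$ is surjective is automatically a smooth point of $Z=\Phi^{-1}(0)$, with $T_zZ=\ker d\Phi_z$ of the expected dimension $\dim M-\dim N$. Thus it suffices to show that $d\Phi_z$ is surjective for every $z\in Z_{\tau^0}$. Recalling that $A\mapsto A^*$ is $\C$-linear (the form $(-,-)$ being bilinear), the differential is computed directly: the $i$-th component of $d\Phi_z(\dot A_1,\dots,\dot A_t)$ is $\dot A_iA_i^*+A_i\dot A_i^*-\dot A_{i+1}^*A_{i+1}-A_{i+1}^*\dot A_{i+1}$ for $i=1,\dots,t-1$.

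To prove surjectivity I would show that the cokernel vanishes, using the nondegenerate trace pairing $\langle S,T\rangle=\operatorname{tr}(ST)$ on each factor $\p(V_i)$. A tuple $(T_1,\dots,T_{t-1})$ with $T_i\in\p(V_i)$ is orthogonal to the image of $d\Phi_z$ precisely when $\sum_i\operatorname{tr}\big((\dot A_iA_i^*+A_i\dot A_i^*-\dot A_{i+1}^*A_{i+1}-A_{i+1}^*\dot A_{i+1})T_i\big)=0$ for all $(\dot A_1,\dots,\dot A_t)$. Collecting, for each $j$, the coefficient of $\dot A_j$ and simplifying with the cyclic invariance of the trace, the identity $\operatorname{tr}(X^*)=\operatorname{tr}(X)$, and the symmetry $T_i^*=T_i$, the two terms arising from each component coincide; the vanishing of the functional $\dot A_j\mapsto\operatorname{tr}(M_j\dot A_j)$ for all $\dot A_j$ then forces $M_j=0$, that is $A_j^*T_j=T_{j-1}A_j^*$ for $j=1,\dots,t$, with the convention $T_0=T_t=0$.

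The geometric input now enters through the stratum $Z_{\tau^0}$: its points are exactly those where every $A_i$ and $B_i=A_i^*$ has maximal rank. Since $n_i=\sum_{j>i}\widehat\lambda_j$ is nonincreasing, $\dim V_{j-1}=n_{j-1}\ge n_j=\dim V_j$, so maximal rank forces $A_j$ to be surjective and hence, by \cref{lem:sno:ranks}, $A_j^*$ to be injective. Feeding this into the cokernel equations and arguing by induction on $j$ closes the argument at once: for $j=1$ the equation reads $A_1^*T_1=0$, whence $T_1=0$ by injectivity of $A_1^*$; assuming $T_{j-1}=0$, the equation $A_j^*T_j=T_{j-1}A_j^*=0$ yields $T_j=0$. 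Therefore the cokernel is trivial, $d\Phi_z$ is surjective, and $z$ is a smooth point.

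I expect the only genuine obstacle to be the surjectivity of $d\Phi_z$, and within it the bookkeeping of adjoints: because $(-,-)$ is a symmetric bilinear (not Hermitian) form, one cannot simply invert operators such as $A_iA_i^*$ (which are in fact nilpotent on $Z$, since $A_iA_i^*=A_{i+1}^*A_{i+1}$ there), so the naive pointwise solution of $d\Phi_z(\dot A)=S$ fails. One must instead pass to the cokernel and use the symmetry of the $T_i$ together with $\operatorname{tr}(X^*)=\operatorname{tr}(X)$ to reduce the transpose system to the triangular form above. Once this reduction is in place, the maximal-rank condition defining $Z_{\tau^0}$ does all the work and the induction is immediate.
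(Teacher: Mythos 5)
Your proof is correct, and it is essentially the argument behind the paper's citation: the paper itself gives no proof but defers to \cite{kraft1982geometry}*{5.5} and \cite{ohta1986singularities}*{Prop. 8}, which likewise verify the Jacobian criterion for $Z=\Phi^{-1}(0)$ by showing $d\Phi_z$ is surjective at points where all $A_i$ (hence, by \cref{lem:sno:ranks}, all $A_i^*$) have maximal rank. One small caveat on your closing commentary: the claim that a direct solution of $d\Phi_z(\dot A)=S$ must fail is overstated, since the system is triangular and can be solved by backward substitution --- set $\dot A_t=0$ and, at each step, take $\dot A_i=\tfrac12 S_i L_i$ with $L_iA_i^*=\operatorname{id}_{V_i}$ (a left inverse exists because $A_i^*$ is injective), so that $\dot A_iA_i^*=\tfrac12 S_i$ and $A_i\dot A_i^*=(\dot A_iA_i^*)^*=\tfrac12 S_i$ for the symmetric right-hand side $S_i$; this does not affect the correctness of your cokernel argument.
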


We recall a dimension formula for the strata $Z_\tau$ due to Otha \cite{ohta1986singularities}*{Prop. 6}. First, for each $ab$-diagram $\delta$, we denote by $a_i$ (resp. $b_i$) the number of rows of $\delta$ starting by $a$ (resp. $b$) of length $i$. We define
\[
    \Delta(\delta) := \sum_{i \text{ odd }}a_ib_i
\]
and
\[
    o(\delta) := \sum_{i \text{ odd }}(a_i + b_i).
\]
\begin{proposition}{\cite{ohta1986singularities}*{Prop. 6}}
    Let $\tau = (\tau_1, \dots, \tau_t)$. We have
    \begin{equation}
        \label{equ:zvar:dimz}
        \begin{split}
            \dim Z_\tau =
            \frac 1 2 \dim C_{\pi(\tau_1)} 
            + \sum_{i=0}^{t-1}\left(\frac 1 2 n_in_{i+1} - \frac 1 4 (n_i + n_{i+1})\right) \\
            + \sum_{i=1}^t \left(\frac 1 4  o(\tau_i) -\frac 1 2\Delta(\tau_i)\right).
        \end{split}
    \end{equation}
\end{proposition}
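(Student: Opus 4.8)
The plan is to deduce the formula from the corresponding computation for the full linear group by a fixed-point-of-involution argument, in the same spirit as the reduction $G'.X \cap N_{V,U} = G.X$ used in \cref{sec:ortoab}. Let $\tilde Z$ denote the variety of representations of $Q_t$ with dimension vector $\underline n$ subject only to the relation $xy = yx$ (dropping $y = x^*$), on which $\tilde G = \GL(V_0) \times \dots \times \GL(V_t)$ acts by change of basis, and let $\sigma$ be the involution of $\tilde Z$ acting on each edge by $(A_i, B_i) \mapsto (B_i^*, A_i^*)$, so that $\tilde Z^\sigma = Z$; likewise $\sigma(g_i) = (g_i^*)^{-1}$ on $\tilde G$ has fixed points $G$. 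By \cite{kraft1979closures} the stratum $\tilde Z_{\tilde\tau}$ cut out by prescribing a single $ab$-diagram on each edge is a single $\tilde G$-orbit, hence smooth, and by the principle of \cite{magyar2000symplectic} (as in \cref{sec:ortoab}) the intersection $Z_\tau = \tilde Z_{\tilde\tau} \cap Z = (\tilde Z_{\tilde\tau})^\sigma$ is a single $G$-orbit.

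First I would record the standard fact that the fixed locus of an involution on a smooth variety in characteristic $0$ is smooth with tangent space the $(+1)$-eigenspace of the differential. Applying this at a point $z \in Z_\tau$, and using that $\tilde Z_{\tilde\tau}$ is a homogeneous space with $d\sigma_z$ an involution of $T_z \tilde Z_{\tilde\tau}$, gives
\[
    \dim Z_\tau = \dim \bigl(T_z \tilde Z_{\tilde\tau}\bigr)^{d\sigma = +1} = \frac{1}{2}\Bigl( \dim \tilde Z_{\tilde\tau} + \operatorname{tr}\bigl(d\sigma_z \mid T_z\tilde Z_{\tilde\tau}\bigr)\Bigr).
\]
Equivalently, via the $\sigma$-equivariant exact sequence $0 \to \operatorname{End}(z) \to \Lie \tilde G \to T_z\tilde Z_{\tilde\tau} \to 0$, this collapses to the orbit--stabilizer identity $\dim Z_\tau = \dim G - \dim \operatorname{End}(z)^\sigma$, where $\operatorname{End}(z)$ is the endomorphism algebra of $z$ as a representation of the relation $xy=yx$ and $\operatorname{End}(z)^\sigma$ is its subalgebra of endomorphisms that are orthogonal at every vertex (the Lie algebra of $\operatorname{Stab}_G(z)$). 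So the problem splits into two independent pieces: the dimension of the ambient $\tilde G$-orbit, and the trace of $\sigma$ on its tangent space, i.e.\ the symmetric-versus-antisymmetric imbalance of $\operatorname{End}(z)$.

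Next I would compute each piece combinatorially. The term $\frac{1}{2}\dim \tilde Z_{\tilde\tau} = \frac{1}{2}(\dim \tilde G - \dim \operatorname{End}(z))$ is handled by expressing $\dim \operatorname{End}(z)$ as a sum of overlap numbers between the $ab$-strings of the diagrams $\tau_i$; by \cref{lem:sno:rankaba} these overlaps are governed by the ranks of the maps $(A_iA_i^*)^{h-1}A_i$, hence by the integers $n_i$, and after reorganising the sum I expect the leading contributions $\frac{1}{2}\dim C_{\pi(\tau_1)}$ (from the endpoint edge, where $A_1A_1^*$ lies in the orbit $C_{\pi(\tau_1)} \subseteq \p(V_1)$, via \eqref{equ:sno:dim}) together with $\sum_i \frac{1}{2}n_in_{i+1}$ (from the bilinear pairings $\dim L(V_{i-1},V_i)=n_{i-1}n_i$). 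The trace term, equivalently the discrepancy $\dim\operatorname{End}(z)^\sigma - \tfrac12\dim\operatorname{End}(z)$, is computed by decomposing $z$ into the indecomposable summands $\alpha_k,\beta_k,\varepsilon_k$ of \cref{tab:ortoab:indec} and determining how $\sigma$, i.e.\ the self-dual/paired structure carried by the invariant form, acts on each $\operatorname{Hom}$-space between summands; this is precisely where the distinction between odd- and even-length strings enters and produces the correction $\sum_i(\tfrac14 o(\tau_i) - \tfrac12\Delta(\tau_i))$ along with the linear terms $-\tfrac14\sum_i(n_i+n_{i+1})$.

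The main obstacle will be this last trace computation. Unlike in the linear case, the dimension of $\operatorname{End}(z)^\sigma$ is not simply half of $\dim\operatorname{End}(z)$: an endomorphism space between two indecomposable strings may be forced to be fixed, anti-fixed, or exchanged by $\sigma$ depending on the parities of the string lengths and on whether the relevant invariant form on a summand is symmetric, and it is exactly this case analysis that yields $o(\tau_i)$ (counting odd-length rows) and $\Delta(\tau_i)$ (counting matched $a$/$b$-starting odd rows). I would therefore isolate a self-contained combinatorial lemma computing $\operatorname{tr}(d\sigma_z\mid\operatorname{Hom})$ for each ordered pair of indecomposable types, verify it against the small cases (for instance a single $\varepsilon_k$, where the two strings are swapped, versus a single $\alpha_k$, which is self-dual), and only then assemble the three pieces and simplify to the stated expression.
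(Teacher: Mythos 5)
There is a genuine gap, and it sits exactly where you predicted the ``main obstacle'' would be. First, note that the paper offers no proof of this proposition at all: it is imported verbatim from \cite{ohta1986singularities}*{Prop.~6}, whose argument (in the manner of \cite{kraft1979closures}*{\S 5} and \cite{kraft1982geometry}*{\S 8}) works \emph{edge by edge}: one maps $Z_\tau$ onto the single $G(V_0)\times G(V_1)$-orbit of pairs with diagram $\tau_1$, identifies the fibers with strata of the truncated quiver, and telescopes, using as the base case the dimension formula for the orbit of a \emph{single} symmetric nilpotent pair. Your strategy --- realize the stratum inside the $\GL$-stratum as the fixed locus of $\sigma$, pass to the orbit--stabilizer identity, and compute $\dim\operatorname{Stab}_G(z)$ from the $*$-involution on $\operatorname{End}(z)$ via the decomposition into the indecomposables of \cref{tab:ortoab:indec} --- is precisely the correct proof of that \emph{base case} (one edge, $t=1$), and it is indeed where $o(\delta)$ and $\Delta(\delta)$ come from. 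So your plan proves the right lemma, but not the stated proposition.

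The step that fails is the globalization. A point $z\in Z_\tau$ is a representation of the whole quiver $Q_t$ with relations; its indecomposable summands are representations of that algebra, not symmetric pairs, so ``decomposing $z$ into the summands $\alpha_k,\beta_k,\varepsilon_k$'' is a type error: \cref{tab:ortoab:indec} classifies indecomposables for one edge only, and $\operatorname{End}(z)$ is a subalgebra of $\bigoplus_i\gl(V_i)$ cut out by global compatibilities, not a sum of edge-local $\Hom$-spaces. There is no substitute finite list to run your case analysis on (the underlying quiver-with-relations algebra is not of finite representation type once $t$ is large), so the proposed ``self-contained combinatorial lemma'' on traces between indecomposable types cannot be formulated; the edge-local shape $\sum_i\bigl(\tfrac14 o(\tau_i)-\tfrac12\Delta(\tau_i)\bigr)$ of the answer is an output of the induction along the quiver, not of any global decomposition. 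The same objection applies to your first piece: an edge-local ``overlap'' formula for $\dim\operatorname{End}(z)$, equivalently for $\dim\tilde Z_{\tilde\tau}$, is exactly the $\GL$-analogue of the formula being proved, and is itself established in \cite{kraft1979closures} by the inductive fibration, not by inspection of the $\tau_i$. Finally, the two single-orbit claims you lean on are substantive rather than formal: that prescribing all the $\tau_i$ cuts out one $\tilde G$-orbit does not follow from the one-edge classification, and the application of \cite{magyar2000symplectic}*{Prop.~2.1} requires verifying its hypotheses for the stabilizer $\operatorname{Aut}(z)$ of a full quiver representation (the paper checks them only for a single pair in \cref{sec:ortoab}); without the single-$G$-orbit conclusion your identity $\dim Z_\tau=\tfrac12\bigl(\dim\tilde Z_{\tilde\tau}+\operatorname{tr}(d\sigma_z)\bigr)$ is also unjustified, since fixed loci of involutions need not be equidimensional. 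The repair is to run your involution/orbit--stabilizer computation only for one edge and then assemble the strata by the Kraft--Procesi--Ohta induction, which is how \eqref{equ:zvar:dimz} is actually obtained.
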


In the following sections we focus on the combinatorics of the $\tau \in \Lambda$. We need tools to effectively manipulate formula (\ref{equ:zvar:dimz}), in particular we aim at \cref{prop:comb:big} and \cref{prop:comb:bigr} (see \cref{sec:comb}).

\section{\texorpdfstring{$s$}{s}-step condition}
\label{sec:sstep}
In this section we define a condition which plays a pivotal role in the subsequent study of the nilpotent symmetric orbits.

\begin{definition}
    Given a partition $\lambda = (\lambda_1, \dots, \lambda_h)$, we say that $\lambda$ satisfies the \emph{$s$-step condition} if for every $i = 1, \dots, h$ we have
    \[
        \lambda_i \leq \lambda_{i+1} + s
    \]
    with the convention that $\lambda_{h+1} = 0$.
\end{definition}

We want to highlight some simple properties of this definition. First of all, $s_1$-step condition implies $s_2$-step condition for each $s_2 \geq s_1$. Secondly, if $\lambda$ satisfies the $s$-step condition, then if we remove the first row or the first column from $\lambda$, the resulting partition still satisfies the $s$-step condition.

\begin{example}
    The single line partition $(n)$ satisfies the $n$-step condition, but not the $(n-1)$-step condition. The triangular partition $(n, n-1, \dots, 1)$ satisfies the $1$-step condition. The \textit{2-skew} triangular partition $(2n, 2(n-1), \dots, 2)$ satisfies the $2$-step condition (but not the $1$-step condition).
\end{example}

\section{Differences between partitions}
\label{sec:diff}

In this section we study some quantitative ways to address the difference between two partitions. Let $\lambda, \mu \in P(n)$ be two partitions of $n$ such that $\lambda > \mu$.

\begin{definition}
    If $\lambda = (\lambda_1, \dots, \lambda_k) \in P(n)$ and $\mu = (\mu_1, \dots, \mu_k) \in P(n)$, possibly allowing $\lambda_i = 0$ for some $i$, we define
    \[
        q(\lambda, \mu) = \frac 1 2 \sum_{i=1}^k |\lambda_i - \mu_i|.
    \]
\end{definition}
\begin{remark}
    By an easy computation modulo 2, it can be seen that $q(\lambda, \mu)$ is an integer.
\end{remark}

\begin{example}
    Let $\lambda = (7, 2, 2, 1)$ and $\mu = (5, 3, 1, 1, 1, 1)$. We can draw the Young diagram of both partitions and overlay one over the other, so that $\lambda$ is given by white and gray boxes while $\mu$ is given by white and black boxes in the following diagram:
    \[
        \ytableaushort{\none}
        * {7,3,2,1,1,1}
        * [*(gray)]{5+2,0,1+1}
        * [*(black)]{0,2+1,0,0,1,1}
    \]

    We can compute the $q$ difference:
    \[
        q(\lambda, \mu) =\frac 1 2 \left( (7-5) + (3-2) + (2-1) + (1-1) + 1 + 1 \right) = 3.
    \]
    We can check that in this case $q(\lambda, \mu)$ is the number of gray boxes or the number of black boxes. This is an easy fact (it is not even needed that $\lambda > \mu$) that can be derived from the following lemma.
\end{example}

\begin{lemma}
    \label{lem:diff:ind}
    Let $\lambda > \mu$ be two partitions of $n$. Then there exist partitions
    \[
    \lambda = \lambda^0 > \lambda^1 > \dots > \lambda^{q(\lambda, \mu)} = \mu
    \]
    such that $q(\lambda^i, \lambda^{i+1}) = 1$ for each $i = 0, \dots, q(\lambda, \mu)-1$.
    
    Moreover, we can choose $\lambda^i$ such that for each row $r$ and for each column $c$ the sequences of integers $\lambda^0_r, \dots, \lambda^{q(\lambda, \mu)}_r$ and $\widehat \lambda^0_c, \dots, \widehat\lambda^{q(\lambda, \mu)}_c$ are monotone.
\end{lemma}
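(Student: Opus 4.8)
The plan is to prove \cref{lem:diff:ind} by induction on $q := q(\lambda,\mu)$, reducing everything to a single elementary step. The engine of the induction is a ``betweenness'' principle: if an intermediate partition $\nu$ has $\nu_i$ lying between $\lambda_i$ and $\mu_i$ for every row $i$ (and, dually, $\widehat\nu_c$ between $\widehat\lambda_c$ and $\widehat\mu_c$ for every column $c$), then $|\lambda_i - \mu_i| = |\lambda_i - \nu_i| + |\nu_i - \mu_i|$, so that $q(\lambda,\mu) = q(\lambda,\nu) + q(\nu,\mu)$. This additivity guarantees that peeling off one step decreases $q$ by exactly $1$, and the same betweenness, imposed at every step, automatically assembles the local comparisons into the global monotonicity of each row and column sequence demanded by the statement. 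Thus it suffices to produce, for any $\lambda > \mu$, a partition $\nu$ with $\lambda > \nu \ge \mu$, $q(\lambda,\nu) = 1$, and row/column betweenness; the induction hypothesis applied to $(\nu,\mu)$ then completes the chain.

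For the single step I would move exactly one box. Call a row $i$ \emph{over-full} if $\lambda_i > \mu_i$ and \emph{under-full} if $\lambda_i < \mu_i$. Inspecting the partial sums $\sum_{i \le k}(\lambda_i - \mu_i) \ge 0$ shows that the topmost non-balanced row is over-full; let $p$ be this row and $p'$ the bottom of the maximal block of rows all of length $v := \lambda_p$. A further partial-sum count produces an under-full row below $p'$; let $q_0$ be the topmost such row and $q'$ the top of its block. I would then remove the corner box $(p',v)$ and add a box at $(q', \lambda_{q'}+1)$, producing $\nu$.

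The routine verifications are that $\nu$ is a partition and that $\lambda > \nu \ge \mu$. Both $(p',v)$ and $(q', \lambda_{q'}+1)$ are genuine corners because $p'$ is the bottom and $q'$ the top of their respective blocks, and one checks $p' < q'$ (otherwise an equality of lengths forces $\mu_{q_0} > \mu_{p'}$ with $q_0 > p'$, contradicting monotonicity of $\mu$). The only way partition-ness could fail is the adjacent configuration $q' = p'+1$ with $\lambda_{p'} = \lambda_{q'}+1$; but there over-fullness of $p'$ and under-fullness of $q'$ would force $v-1 \ge \mu_{p'} \ge \mu_{q'} \ge v$, a contradiction, so this never occurs. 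Dominance $\nu \ge \mu$ reduces to showing $d_k := \sum_{i\le k}(\lambda_i-\mu_i) \ge 1$ for $p' \le k \le q'-1$, which holds since $d_{p'} \ge 1$ and no under-full row intervenes before $q_0$. Row betweenness is immediate: the donor row $p'$ is over-full and the recipient row $q'$ under-full, so each moves one unit toward its $\mu$-value without overshooting.

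The main obstacle I anticipate is the \emph{column} betweenness, since the move is chosen with rows in mind and a removed box need not a priori sit in an over-full column. The crux is a monotonicity observation about $\mu$: the donor box lies in column $v = \lambda_{p'}$, and because $\mu$ is weakly decreasing while rows $p,\dots,p'$ are over-full (so $\mu_i < v$ there), every row below $p'$ also satisfies $\mu_i \le \mu_{p'} < v$; hence $\widehat\mu_v = p-1 < p' = \widehat\lambda_v$, so column $v$ is over-full and removing a box keeps $\widehat\nu_v \ge \widehat\mu_v$. A dual count at the recipient, using that $q'$ is under-full so $\mu_{q'} \ge \lambda_{q'}+1$ and therefore $\widehat\mu_{\lambda_{q'}+1} \ge q' > q'-1 = \widehat\lambda_{\lambda_{q'}+1}$, shows the recipient column is under-full. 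This secures column betweenness, completes the single step, and thereby closes the induction.
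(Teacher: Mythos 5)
Your proof is correct and follows essentially the same route as the paper: induction on $q(\lambda,\mu)$, where one step moves a single box from the bottom row of the topmost over-full block to the first under-full row (your $q'$ in fact coincides with $q_0$, which is the paper's recipient row $j$), and monotonicity follows because every step moves each row and column weakly toward its value in $\mu$. The only difference is one of detail, not of method: you verify dominance $\nu \geq \mu$, the adjacent-row case of partition-ness, and column betweenness explicitly, which the paper leaves implicit under ``by construction.''
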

\begin{proof}
    We proceed by induction on $q=q(\lambda, \mu)$. If $q = 1$ we trivially have $\lambda^1 = \mu$. If $q > 1$, we are going to build $\lambda^1$ such that $q(\lambda, \lambda^1) = 1$ and $q(\lambda^1, \mu) = q(\lambda, \mu) - 1$.
    
    We build $\lambda^1$ starting from $\lambda$ in this way. Let $\tilde \imath$ be the first index such that $\lambda_{\tilde \imath} > \mu_{\tilde \imath}$ and let $j$ be the first index such that $\lambda_j < \mu_j$. Clearly there must exists such indices because $\lambda \neq \mu$ and $|\lambda| = |\mu|$. Let $i \geq \tilde \imath$ be the last index such that $\lambda_i = \lambda_{\tilde \imath}$ (possibly $i = \tilde \imath$). As $\lambda > \mu$, we must have $\tilde \imath < j$ and therefore $i < j$. We define
    \[
        \lambda^1 = (\lambda_1, \dots, \lambda_{i-1}, \lambda_i-1, \lambda_{i+1}, \dots, \lambda_{j-1}, \lambda_j + 1, \lambda_{j+1}, \dots, \lambda_k).
    \]
    The sequence $\lambda^1$ is actually a partition of $n$ as $|\lambda^1| = |\lambda| = n$, $\lambda_i-1 \geq \lambda_{i+1}$ by the maximality of $i$ and $\lambda_{j-1} \geq \lambda_j+1$ by the minimality of $j$.
    
    Moreover, both the following hold: $q(\lambda, \lambda^1) = 1$ and $q(\lambda^1, \mu) = q-1$. We apply induction on the pair $(\lambda^1, \mu)$ and we conclude the proof of the first part of the lemma.

    To conclude the proof, we notice that if, for a row $r$, $\lambda_r > \mu_r$ (resp. $\lambda_r < \mu_r$), then we have $\lambda^i_r \geq \lambda^{i+1}_r$ (resp. $\lambda^i_r \leq \lambda^{i+1}_r$) by construction. The same holds true for the columns lengths.
\Qed\end{proof}

\begin{example}
    Taking the partitions $\lambda$, $\mu$ defined in the previous example, the construction explained in the lemma produces the following sequence of partitions:
    \[
    (7, 2, 2, 1) > (6, 3, 2, 1) > (5, 3, 2, 1, 1) > (5, 3, 1, 1, 1).
    \]
\end{example}

\begin{remark}
    In view of \cref{lem:diff:ind} we can interpret $q(\lambda, \mu)$ as the minimum number of boxes needed to lower in order to obtain $\mu$ from $\lambda$.
\end{remark}

We consider two other ways of comparing partitions. Given a partition $\lambda$, for each box $B$ in the Young diagram of $\lambda$ we define its column number $c(B)$ (resp. row number $r(B)$) by counting the columns (resp. rows) starting from the leftmost column (resp. uppermost row). For example, given $\lambda = (7, 2, 2, 1)$ we have:

\begin{center}
    $c(-)$: \ytableaushort{1 2 3 4 5 6 7, 1 2, 1 2, 1} \qquad
    $r(-)$: \ytableaushort{1 1 1 1 1 1 1, 2 2, 3 3, 4}
\end{center}

\begin{remark}
    By summing up over the rows (resp. the columns) of $\lambda$, we have:
    \begin{equation}
        \label{equ:diff:trivsum}
        \sum_{B \in \lambda} c(B) = \sum_{i=1}^h \binom{\lambda_i+1}{2} \quad \left(\text{resp. } \sum_{B \in \lambda} r(B) = \sum_{i=1}^t\binom{\widehat{\lambda}_i+1}{2}\right).
    \end{equation}
\end{remark}

\begin{definition}
    Given $\lambda \geq \mu$ as before, we define:
    \[
        c(\lambda, \mu) = \sum_{B \in \lambda} c(B) - \sum_{B \in \mu} c(B),
    \]
    where $B$ is selected among the boxes in the Young diagrams of $\lambda$ and $\mu$ and, similarly,
    \[
        r(\lambda, \mu) = \sum_{B \in \mu} r(B) - \sum_{B \in \lambda} r(B).
    \]
\end{definition}

\begin{example}
Let $\lambda = (7, 2, 2, 1)$ and $\mu = (5, 3, 1, 1, 1, 1)$. We compute
\begin{align*}
    c(\lambda, \mu) & = (28 + 3 + 3 + 1) - (15 + 6 + 1 + 1 + 1 + 1) = 10, \\
    r(\lambda, \mu) & = (21 + 3 + 3 + 1 + 1) - (10 + 6 + 1 + 1 + 1 + 1 + 1) = 8.
\end{align*}
\end{example}

\begin{remark}
    It is always guaranteed that $c(\lambda, \mu) \geq 0$ and $r(\lambda, \mu) \geq 0$ for every $\lambda \geq \mu$. We even have that
    \begin{equation}
        \label{equ:diff:equcases}
        \lambda = \mu \Longleftrightarrow c(\lambda, \mu) = 0 \Longleftrightarrow r(\lambda, \mu) = 0.
    \end{equation}
    
    In fact we have that $c(\lambda, \mu) \geq 0$ is equivalent to
    \begin{equation}
        \label{equ:diff:karam}
        \sum_i \binom{\lambda_i+1}2 \geq \sum_i \binom{\mu_i+1}2.
    \end{equation}
    As the function $n \mapsto \binom{n+1}{2}$ is convex, (\ref{equ:diff:karam}) is given by the Karamata's inequality (\cite{beckenbach2012inequalities}*{Chapter 1, \S 28}).
\end{remark}

\begin{remark}
    Given partitions $\lambda \geq \mu \geq \nu$, by definition of $c$ and $r$ we have:
    \begin{align}
        \label{equ:diff:cadd}
        c(\lambda, \nu) &= c(\lambda, \mu) + c(\mu, \nu), \\
        \label{equ:diff:radd}
        r(\lambda, \nu) &= r(\lambda, \mu) + r(\mu, \nu).
    \end{align}
\end{remark}

\begin{remark}
    For every pair of partitions $\lambda \geq \mu$ we have that
    \begin{equation}
        \label{equ:diff:ineq}
        c(\lambda, \mu) \geq q(\lambda, \mu).
    \end{equation}
    In fact, when $q(\lambda, \mu) = 1$, by (\ref{equ:diff:equcases}) we get (\ref{equ:diff:ineq}). Otherwise, by \cref{lem:diff:ind}, we have a sequence of partitions $\lambda^0, \dots, \lambda^{q(\lambda, \mu)}$. For each pair $(\lambda^i, \lambda^{i+1})$, we have $c(\lambda^i, \lambda^{i+1}) \geq 1$, therefore the conclusion follows by summing up each inequalities by (\ref{equ:diff:cadd}).
\end{remark}

\begin{lemma}
    \label{lem:diff:usef}
    Let $\lambda$ be a partition satisfying the $s$-step condition and $\mu < \lambda$ be another partition. Then
    \begin{equation}
        \label{equ:diff:sstep}
        s \cdot r(\lambda, \mu) \geq c(\lambda, \mu) + q(\lambda, \mu).
    \end{equation}
    Moreover, (\ref{equ:diff:sstep}) holds strictly if there exists a column index $c$ such that $\widehat \mu_c > \widehat \lambda_c + 1$ or a row index $r$ such that $\mu_r >  \lambda_r + 1$.
\end{lemma}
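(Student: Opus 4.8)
The plan is to reduce everything to the elementary one-box moves provided by \cref{lem:diff:ind} and to verify the inequality move-by-move. Fix a chain $\lambda = \lambda^0 > \lambda^1 > \dots > \lambda^{q} = \mu$ with $q = q(\lambda,\mu)$ as in \cref{lem:diff:ind}, so that each passage $\lambda^k \to \lambda^{k+1}$ lowers a single box, say from row $i_k$ (at column $\lambda^k_{i_k}$) to row $j_k$ (at column $\lambda^k_{j_k}+1$), with $i_k < j_k$. Since $c$ and $r$ are additive along such a chain by (\ref{equ:diff:cadd}) and (\ref{equ:diff:radd}), and $q(\lambda,\mu) = \sum_k q(\lambda^k, \lambda^{k+1}) = \sum_k 1$, it suffices to control the three quantities for a single move. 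A direct inspection gives $r(\lambda^k,\lambda^{k+1}) = j_k - i_k$, $c(\lambda^k,\lambda^{k+1}) = \lambda^k_{i_k} - \lambda^k_{j_k} - 1$ and $q(\lambda^k,\lambda^{k+1}) = 1$, so the per-move inequality I need is exactly $s(j_k - i_k) \ge \lambda^k_{i_k} - \lambda^k_{j_k}$.

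To prove this per-move bound I will exploit the monotonicity built into \cref{lem:diff:ind}: each row sequence $\lambda^0_r, \dots, \lambda^q_r$ is monotone, so a row that ever loses a box never gains one and vice versa. Hence $\lambda^k_{i_k} \le \lambda^0_{i_k} = \lambda_{i_k}$ and $\lambda^k_{j_k} \ge \lambda^0_{j_k} = \lambda_{j_k}$, which together with the $s$-step condition on $\lambda$ (telescoping $\lambda_{i_k} - \lambda_{j_k} = \sum_{l=i_k}^{j_k-1}(\lambda_l - \lambda_{l+1}) \le s(j_k - i_k)$) yields
\[
    \lambda^k_{i_k} - \lambda^k_{j_k} \le \lambda_{i_k} - \lambda_{j_k} \le s(j_k - i_k).
\]
Summing over $k$ proves (\ref{equ:diff:sstep}). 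For the strict statement I record the total slack as $\sum_k (A_k + B_k)$, where $A_k = s(j_k-i_k) - (\lambda_{i_k} - \lambda_{j_k}) \ge 0$ is the $s$-step slack and $B_k = (\lambda_{i_k} - \lambda^k_{i_k}) + (\lambda^k_{j_k} - \lambda_{j_k}) \ge 0$ is the ``already-moved'' slack; equality in (\ref{equ:diff:sstep}) means every $A_k$ and every $B_k$ vanishes.

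I then argue the strict part by contraposition, assuming equality. Vanishing of all $B_k$ forces each row to donate at most once and receive at most once, so $\mu_r \le \lambda_r + 1$ for all $r$; this already settles the row case, since a row with $\mu_r > \lambda_r + 1$ must gain a box a second time, and at that step $\lambda^k_{j_k} > \lambda_{j_k}$ makes $B_k > 0$. In this all-tight situation the moves give a bijection between the donor rows $D$ (with $\mu_r = \lambda_r - 1$) and the receiver rows $R$ (with $\mu_r = \lambda_r + 1$), and vanishing of each $A_k$ forces every matched pair $(i,j)$ to satisfy $\lambda_i - \lambda_j = s(j-i)$, which by the $s$-step condition means $\lambda$ drops by exactly $s$ at each row strictly between $i$ and $j$.

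The column case is the step I expect to be the crux. Still assuming equality, suppose $\widehat\mu_c > \widehat\lambda_c + 1$. Because every row moves by at most one, $\widehat\mu_c - \widehat\lambda_c$ equals the number of receivers with $\lambda_r = c-1$ minus the number of donors with $\lambda_r = c$, so there are two receivers $j_1 < j_2$ with $\lambda_{j_1} = \lambda_{j_2} = c-1$. Let $i_2 < j_2$ be the donor matched to $j_2$; the ``drop by exactly $s$'' property makes $\lambda$ strictly decreasing from row $i_2$ to row $j_2$, so $j_1$ cannot lie in $[i_2, j_2]$, forcing $j_1 < i_2$. But then monotonicity of the partition gives $\lambda_{j_1} \ge \lambda_{i_2} = (c-1) + s(j_2 - i_2) \ge c$, contradicting $\lambda_{j_1} = c-1$. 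This contradiction rules out equality as soon as a column (or a row) exceeds by more than one, which is exactly the strict statement. The inequality itself is a clean telescoping once the monotone chain of \cref{lem:diff:ind} is available; the only genuine subtlety is organizing this final combinatorial contradiction.
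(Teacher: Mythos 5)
Your proof is correct, and its first half coincides with the paper's own argument: the same chain from \cref{lem:diff:ind}, the same identities $r(\lambda^k,\lambda^{k+1}) = j_k-i_k$ and $c(\lambda^k,\lambda^{k+1}) = \lambda^k_{i_k}-\lambda^k_{j_k}-1$, and the same per-move bound obtained from monotonicity of the chain plus the telescoped $s$-step condition (your displayed inequality is precisely (\ref{equ:diff:right})). Where you genuinely diverge is the strictness statement. The paper stays move-by-move: when a column $c$ grows at two steps $j<i$ of the chain, it exhibits a \emph{strict} version of (\ref{equ:diff:right}) at step $i$ by comparing the donor row of step $i$ with the receiver row of the earlier step $j$, and it runs a separate, parallel argument for the row case via the set $I_{\text{rows}}(r)$. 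You instead argue by contradiction from equality, using the slack decomposition $s\cdot r(\lambda,\mu)-c(\lambda,\mu)-q(\lambda,\mu)=\sum_k(A_k+B_k)$: vanishing of all $B_k$ forces every row to change by at most one box (which settles the row case immediately, more cheaply than the paper's treatment), produces a donor--receiver bijection, and vanishing of all $A_k$ forces exact drops of $s$ along each matched interval; the column case then collapses because two receivers at the same height $c-1$ are incompatible with a strictly decreasing segment of slope $s$ ending at the lower one. Both arguments rest on the same phenomenon (a column can only grow twice if the second landing row sits strictly lower, which breaks the $s$-step budget), but your contrapositive bookkeeping localizes all the case analysis in one equality criterion and trivializes the row case, while the paper's version never needs the global matching structure and pinpoints a single strict move. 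One cosmetic remark: your column-case conclusion uses $s\geq 1$ (to pass from exact drops to strict decrease and to get $\lambda_{i_2}\geq c$); this is harmless, since the $0$-step condition forces $\lambda$ to be empty and the lemma is then vacuous, but it deserves a word.
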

\begin{proof}
    Let $q = q(\lambda, \mu)$. By \cref{lem:diff:ind}, we obtain a sequence
    \[
        \lambda = \lambda^0 > \dots > \lambda^q = \mu,
    \]
    with $q(\lambda^i, \lambda^{i+1})=1$ for $i = 0, \dots, q-1$.
    
    We want to prove
    \begin{equation}
        \label{equ:diff:onestep}
        s \cdot r(\lambda^i, \lambda^{i+1}) \geq c(\lambda^i, \lambda^{i+1}) + 1
    \end{equation}
    for all $i = 0, \dots, q - 1$.
    Once (\ref{equ:diff:onestep}) is proved, the conclusion will follow by summing up each of those inequalities by (\ref{equ:diff:radd}) and (\ref{equ:diff:cadd}).

    We start by noticing that the $s$-step condition implies that, for every pair of row indices $r_1 < r_2$,
    \[
        \lambda_{r_1} - \lambda_{r_2} \leq s (r_2 - r_1).
    \]
    Fix $r_1$ to be an index row such that $\lambda_{r_1} \geq \mu_{r_1}$ and fix $r_2$ to be an index row such that $\lambda_{r_2} \leq \mu_{r_2}$. Lemma \ref{lem:diff:ind} asserts that $\lambda^i_{r_j}$ is a monotone sequence for both $j=1,2$; therefore, for such $r_1, r_2$, we get $\lambda^i_{r_1} - \lambda^i_{r_2} \leq \lambda_{r_1} - \lambda_{r_2}$, so we get
    \begin{equation}
        \label{equ:diff:right}
        \lambda^i_{r_1} - \lambda^i_{r_2} \leq s (r_2 - r_1).
    \end{equation}
    
    As $q(\lambda^i, \lambda^{i+1}) = 1$, we already observed that the Young diagrams of $\lambda^i, \lambda^{i+1}$ have only one box in different positions. This means that there exist exactly two columns $c_{i,1} < c_{i,2}$ which are not equal between $\lambda^i, \lambda^{i+1}$. Similarly, there exist exactly two rows indices $r_{i,1} < r_{i,2}$.
    \[
        \begin{ytableau}
            \none & \none[c_{i,1}] & \none & \none & \none[c_{i,2}]\\
            \none & & & & & \\
            \none[r_{i,1}] & & & & *(gray) \\
            \none & \\
            \none & \\
            \none[r_{i,2}] & *(black) \\
        \end{ytableau}
    \]

    We have $c(\lambda^i, \lambda^{i+1}) = c_{i,2} - c_{i,1}$ and $r(\lambda^i, \lambda^{i+1}) = r_{i,2} - r_{i,1}$.
    Moreover, by definition of $\lambda^i$ and $\lambda^{i+1}$, we have $\lambda^i_{r_{i,1}} = c_{i,2}$ and $\lambda^i_{r_{i,2}} = c_{i,1} - 1$.
    Therefore, by (\ref{equ:diff:right}), we get
    \begin{align*}
        & s \cdot r(\lambda^i, \lambda^{i+1}) = s(r_{i,2} - r_{i,1}) \geq
        \lambda^i_{r_{i,1}} - \lambda^i_{r_{i,2}} = c_{i,2} - c_{i,1} + 1 = c(\lambda^i, \lambda^{i+1}) + 1.
    \end{align*}

    Finally, we consider the additional hypothesis of the existence of a column index $c$ such that $\widehat \mu_c > \widehat \lambda_c + 1$ or a row index $r$ such that $\mu_r >  \lambda_r + 1$. We are going to prove that there exists $i$ such that (\ref{equ:diff:right}) holds strictly.
    
    Let
    \[
        I_{\text{cols}}(c) = \left\{ i\in \{0, \dots, q-1\}: \widehat{\lambda^{i+1}_c} > \widehat{\lambda^i_c}\right\}.
    \]
    As $q(\lambda^i, \lambda^{i+1}) = 1$, we have $|\widehat{\lambda^{i+1}}_c - \widehat{\lambda^i}_c| \leq 1$ for each $i$. Therefore, if there exists $c$ such that $\widehat \mu_c > \widehat \lambda_c + 1$, we have $|I_{\text{cols}}(c)| \geq 2$. Let $j$ be the minimum in $I_{\text{cols}}(c)$. We show that (\ref{equ:diff:right}) holds strictly for the rows $r_{i,1}, r_{i,2}$ for every $i \in I_{\text{cols}}(c)$ such that $i \neq j$. Indeed, as $\lambda^i_{r_{i,2}} = c = \lambda^j_{r_{j,2}}$, we have
    \begin{align*}
        & \lambda^i_{r_{i,1}} - \lambda^i_{r_{i,2}} \leq
        \lambda^j_{r_{i,1}} - \lambda^j_{r_{j,2}} \leq
        \lambda_{r_{i,1}} - \lambda_{r_{j,2}} \leq \\
        & \leq
        s(r_{j,2} - r_{i,1}) \leq
        s(r_{i,2} - 1 - r_{i,1}) <
        s(r_{i,2} - r_{i,1}).
    \end{align*}
    
    In a similar fashion, let
    \[
        I_{\text{rows}}(r) = \left\{ i\in \{0, \dots, q-1\}: {\lambda^{i+1}_r} > {\lambda^i_r}\right\}.
    \]
    If there exists $r$ such that $\mu_r > \lambda_r + 1$, then $|I_{\text{rows}}(r)| \geq 2$ and we take $j \in I_{\text{rows}}(r)$ to be the minimum and $i \in I_{\text{rows}}(r)$ to be another index. We have $\lambda^i_{r_{i,2}} > \lambda^j_{r_{i,2}} = \lambda_{r_{i,2}}$, so we get
    \[
        \lambda^i_{r_{i,1}} - \lambda^i_{r_{i,2}} <
        \lambda_{r_{i,1}} - \lambda_{r_{i,2}} \leq
        s(r_{i,2} - r_{i,1}).
    \]

    In both cases (\ref{equ:diff:right}) holds strictly for at least an $i$. Therefore (\ref{equ:diff:onestep}) holds strictly for this $i$, and (\ref{equ:diff:sstep}) must hold strictly as a consequence. Thus, the lemma is proven.
\Qed\end{proof}

\begin{example}
    Let
    \[
        \lambda = (6, 4, 2), \quad \mu = (5, 3, 2, 1, 1), \quad \nu = (5, 3, 3, 1)
    \]
    be three partitions of $n=12$. We have that $\mu < \lambda$ and $\nu < \lambda$; moreover, in the pair $(\lambda, \mu)$ the first column differs by $2$ boxes, while no column in $(\lambda, \nu)$ differs more than $1$ box.
    
    Since $\lambda$ satisfies the $2$-step condition, we can compute each term involved in (\ref{equ:diff:sstep}). For $(\lambda, \mu)$ we have:
    $
        2\cdot 6 \geq 8 + 2
    $
    (which holds strictly), while for $(\lambda, \nu)$ we have:
    $
        2 \cdot 4 \geq 6 + 2
    $.
\end{example}

\section{Inequalities on the dimensions}
\label{sec:comb}

In this section we will use the tools introduced in \cref{sec:diff} to effectively compute the dimensions of the strata in $Z$.

Let $\lambda = (\lambda_1,\dots,\lambda_k)$ be a partition of $n:=|\lambda|$ with $t:=\lambda_1$ columns. We want to compare the dimension of $Z_{\tau^0}$ with the dimension of each other stratum $Z_\tau$ with $\tau \in \Lambda^{(\lambda)}$. Let $\mu = \pi(\tau)$, so that $\mu \leq \lambda$. The aim of this section is to give some sufficient combinatorial conditions on the pair of partitions $(\lambda, \mu)$ in order to secure a bound on the difference $\dim Z_{\tau^0} - \dim Z_\tau$.

The aim of the current section is to prove \cref{prop:comb:big} and \cref{prop:comb:bigr}. In order to introduce the stronger inequality in \cref{prop:comb:bigr}, we need an in-depth study of the combinatorics of $\Lambda$, so we prefer to introduce it later.
\begin{proposition}
    \label{prop:comb:big}
    Let $\lambda \geq \mu$ be two partitions and let $Z=Z^{(\lambda)}$ be the variety built from $\lambda$.  Let $Z_{\tau^0}$ be the only stratum of $Z$ with $\pi(\tau^0_1) = \lambda$ and let $Z_\tau$ be a stratum with $\tau=(\tau_1, \dots, \tau_t)$ such that $\pi(\tau_1) = \mu$.
    
    For each real number $x$, we have that:
    \[
        2r(\lambda, \mu) - c(\lambda, \mu) - q(\lambda, \mu) \geq 4x
        \quad \Longrightarrow \quad
        \dim Z_{\tau^0} - \dim Z_\tau \geq  x.
    \]
\end{proposition}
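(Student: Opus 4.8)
The plan is to compute $\dim Z_{\tau^0} - \dim Z_\tau$ directly from Ohta's dimension formula~(\ref{equ:zvar:dimz}) and then show that the resulting expression is bounded below by $\tfrac14(2r(\lambda,\mu) - c(\lambda,\mu) - q(\lambda,\mu))$. First I would write out both dimensions using~(\ref{equ:zvar:dimz}). Crucially, the middle sum $\sum_{i=0}^{t-1}(\tfrac12 n_in_{i+1} - \tfrac14(n_i+n_{i+1}))$ depends only on the dimension vector $\underline n$, which is fixed by $\lambda$ and is the same for every $\tau \in \Lambda^{(\lambda)}$; hence this term cancels entirely in the difference. What survives is
\[
    \dim Z_{\tau^0} - \dim Z_\tau
    = \tfrac12\bigl(\dim C_{\pi(\tau^0_1)} - \dim C_{\pi(\tau_1)}\bigr)
    + \sum_{i=1}^t\Bigl(\tfrac14 o(\tau^0_i) - \tfrac12\Delta(\tau^0_i) - \tfrac14 o(\tau_i) + \tfrac12\Delta(\tau_i)\Bigr).
\]
Here $\pi(\tau^0_1)=\lambda$ and $\pi(\tau_1)=\mu$, so the first term is $\tfrac12(\dim C_\lambda - \dim C_\mu)$, which by the orbit-dimension formula~(\ref{equ:sno:dim}) is $\tfrac14(\sum_i \widehat\mu_i^{\,2} - \sum_i \widehat\lambda_i^{\,2})$.

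Next I would translate each piece into the combinatorial quantities of \cref{sec:diff}. The term $\tfrac14\sum_i(\widehat\mu_i^{\,2} - \widehat\lambda_i^{\,2})$ should be re-expressed in terms of $r(\lambda,\mu)$: since $r(\lambda,\mu) = \sum_{B\in\mu} r(B) - \sum_{B\in\lambda} r(B)$ and $\sum_{B}r(B) = \sum_i \binom{\widehat\lambda_i+1}{2}$ by~(\ref{equ:diff:trivsum}), one gets $\sum_i(\widehat\mu_i^{\,2} - \widehat\lambda_i^{\,2}) = 2\,r(\lambda,\mu) + \sum_i(\widehat\mu_i - \widehat\lambda_i)$, and the linear correction vanishes because $|\lambda|=|\mu|$. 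Thus the orbit-dimension contribution is exactly $\tfrac12 r(\lambda,\mu)$, giving the leading $2r$ after multiplying by the ambient factor. The real work is to control the $o$- and $\Delta$-terms. I expect to bound the sum $\sum_i(\tfrac14 o(\tau^0_i) - \tfrac12\Delta(\tau^0_i)) - \sum_i(\tfrac14 o(\tau_i) - \tfrac12\Delta(\tau_i))$ from below by $-\tfrac14(c(\lambda,\mu)+q(\lambda,\mu))$ plus a nonnegative remainder, using the fact that each $\tau_i$ is ortho-symmetric (a disjoint union of the diagrams $\alpha_k,\beta_k,\varepsilon_k$ from \cref{tab:ortoab:indec}) and that consecutive diagrams are linked by $\rho(\tau_i)=\pi(\tau_{i+1})$.

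The main obstacle will be the last step: the quantities $o(\tau_i)$ and $\Delta(\tau_i)$ depend on the full internal structure of each $\tau_i$, not just on the boundary partitions $\pi(\tau_i)$ and $\rho(\tau_i)$, so I cannot read them off from $\lambda$ and $\mu$ alone. My strategy is to count odd-length strings: $o(\tau_i)$ counts odd-length $ab$-strings, and for an ortho-symmetric diagram the odd strings are precisely the $\alpha_k$'s and $\beta_k$'s, whose presence is detected by the parities of the column-lengths of $\pi(\tau_i)$ and $\rho(\tau_i)$. I would express $o(\tau_i)$ and $\Delta(\tau_i)$ through the number of odd parts in these intermediate partitions, sum the telescoping contributions across $i=1,\dots,t$ (noting $\pi(\tau_1)=\mu$, $\rho(\tau_t)=\pi(\tau_{t+1})$ is trivial, and the chain $\rho(\tau_i)=\pi(\tau_{i+1})$ collapses most terms), and finally invoke $\tau^0$'s explicit description—every $\tau^0_i$ has maximal rank, so its strings are as long and as few as possible—to see that $\tau^0$ minimizes the $o/\Delta$ correction. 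The residual gap between the generic $\tau$ and $\tau^0$ should then be estimated by $c(\lambda,\mu)$ and $q(\lambda,\mu)$, with the parity bookkeeping producing exactly the claimed coefficients; this careful odd-string accounting is the crux and is presumably what is deferred to \cref{sec:comb}.
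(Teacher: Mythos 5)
Your first half is fine and coincides with the paper's own reduction: the middle sum in (\ref{equ:zvar:dimz}) depends only on the dimension vector $\underline n$ determined by $\lambda$, hence cancels in the difference, and the identity $m^2 = 2\binom{m+1}{2} - m$ together with $|\lambda| = |\mu|$ turns $2(\dim C_\lambda - \dim C_\mu)$ into $2r(\lambda,\mu)$; this is precisely \cref{lem:comb:col}. The gap is everything after that. The whole content of the proposition is the bound $\sum_{i=1}^t\bigl(o(\tau^0_i) - o(\tau_i) + 2\Delta(\tau_i)\bigr) \geq -\,c(\lambda,\mu) - q(\lambda,\mu)$, and for this you offer only a plan whose central premise is false: you claim that the odd strings of an ortho-symmetric diagram (hence $o$, and with it $\Delta$) are ``detected by the parities of the column-lengths of $\pi(\tau_i)$ and $\rho(\tau_i)$'', so that a telescoping sum over the chain of intermediate partitions would control them. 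They are not. Take $\varepsilon_1 = \{ab,\ ba\}$ and $\alpha_0 \sqcup \alpha_0 \sqcup \beta_0 \sqcup \beta_0 = \{a,\ a,\ b,\ b\}$: both are ortho-symmetric, with the same numbers of $a$'s and $b$'s and identical boundary partitions $\pi = \rho = (1,1)$, yet $o - 2\Delta$ equals $0$ for the first and $4 - 2\cdot 4 = -4$ for the second. So $o(\tau_i) - 2\Delta(\tau_i)$ is genuinely not a function of the boundary partitions, no parity bookkeeping recovers it, and the proposed telescoping cannot even start. (Note also that this proposition \emph{is} in \cref{sec:comb}: the ``careful odd-string accounting'' you defer to that section is exactly what you were asked to supply.)

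What the paper does instead, and what your outline is missing, is to introduce a second comparison string: $\sigma^0 \in \Lambda^{(\mu)}$, the canonical string of $ab$-diagrams attached to the \emph{smaller} partition $\mu$. Because $\pi(\tau_1) = \mu$ and $\rho(\tau_{i-1}) = \pi(\tau_i)$, every $\tau_i$ lies in $\aug_{\sigma^0_i}(d_i)$ for a letter-list $d_i$ whose multiplicities $d_i^a, d_i^b$ depend only on $(\lambda,\mu)$ (formula (\ref{equ:comb:dlist})); moreover $\sum_i o(\sigma^0_i) = n = \sum_i o(\tau^0_i)$, which is the bridge between the two canonical strings. The proof then rests on two lemmas, neither of which has a counterpart in your proposal: \cref{lem:comb:maxab}, giving $o(\tau_i) - 2\Delta(\tau_i) - o(\sigma^0_i) \leq \max\{d_i^a, d_i^b\}$ (a local argument about adding letters to a diagram all of whose rows start and end with $a$, where the term $a_1b_1$ inside $\Delta$ absorbs the newly created short rows), and \cref{lem:comb:clem}, giving $\sum_i \max\{d_i^a, d_i^b\} \leq c(\lambda,\mu) + q(\lambda,\mu)$, proved by decomposing $\lambda > \mu$ into $q(\lambda,\mu)$ single-box moves via \cref{lem:diff:ind} and checking the one-box case directly. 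Without these two estimates, or a genuine substitute for them, your argument does not go through.
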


The proof will proceed as follows: we take the dimension formula of the strata of $Z$ given by (\ref{equ:zvar:dimz}) then we will estimate each term on the difference. Each of the following lemmas deals with one of the terms.

\begin{lemma}
    \label{lem:comb:col}
    Let $\lambda \geq \mu$ be two partitions of $n$. Let $t = \lambda_1$ be the number of columns of $\lambda$.  Then
    \[
        \sum_{i=1}^t \left(\widehat{\mu}_i^2 - \widehat{\lambda}_i^2\right) = 2r(\lambda, \mu).
    \]
    (We allow $\widehat{\mu}_i = 0$ for each $i > \mu_1$.)
\end{lemma}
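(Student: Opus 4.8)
The plan is to reduce the whole statement to the row-sum identity (\ref{equ:diff:trivsum}) and then carry out one elementary algebraic simplification. First I would unwind the definition of $r(\lambda,\mu)$ and apply the right-hand identity in (\ref{equ:diff:trivsum}) to each partition separately, obtaining
\[
    r(\lambda,\mu) = \sum_{B\in\mu} r(B) - \sum_{B\in\lambda} r(B)
    = \sum_{i=1}^t \binom{\widehat{\mu}_i+1}{2} - \sum_{i=1}^t \binom{\widehat{\lambda}_i+1}{2},
\]
where the index range $1,\dots,t$ is used for both partitions with the stated convention $\widehat{\mu}_i = 0$ for $i > \mu_1$.

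Next I would expand the binomial coefficients using $\binom{m+1}{2} = \tfrac12(m^2+m)$, so that multiplying by $2$ yields
\[
    2\,r(\lambda,\mu) = \sum_{i=1}^t\bigl(\widehat{\mu}_i^{\,2} - \widehat{\lambda}_i^{\,2}\bigr) + \sum_{i=1}^t\bigl(\widehat{\mu}_i - \widehat{\lambda}_i\bigr).
\]
The key observation is then that the linear term vanishes: the column lengths of a partition sum to its size, so $\sum_i \widehat{\mu}_i = |\mu| = n = |\lambda| = \sum_i \widehat{\lambda}_i$. Substituting this in collapses the expression to $\sum_{i=1}^t(\widehat{\mu}_i^{\,2} - \widehat{\lambda}_i^{\,2})$, which is exactly the claimed identity.

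There is no genuine obstacle here; the statement is an exact identity rather than an inequality, and the argument is a one-line computation once (\ref{equ:diff:trivsum}) is in hand. The only point deserving a moment of care is the index bookkeeping. One must check that taking $t = \lambda_1$ columns is enough to list \emph{every} nonzero column length of both $\lambda$ and $\mu$; this follows because $\lambda \geq \mu$ in the dominance order forces $\mu_1 \leq \lambda_1 = t$, so $\mu$ has at most $t$ columns, and the convention $\widehat{\mu}_i = 0$ for $i > \mu_1$ (with $\binom{1}{2}=0$) makes the two applications of (\ref{equ:diff:trivsum}) legitimate over the common index range.
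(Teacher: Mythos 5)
Your proof is correct and is essentially the paper's own argument run in the opposite direction: both rest on the identity $m^2 = 2\binom{m+1}{2} - m$ (equivalently $\binom{m+1}{2} = \tfrac12(m^2+m)$), the row-sum formula (\ref{equ:diff:trivsum}), and the cancellation of the linear terms via $|\lambda| = |\mu| = n$. Your extra remark that dominance forces $\mu_1 \leq \lambda_1 = t$, making the common index range legitimate, is a detail the paper leaves implicit in its convention, but it does not change the approach.
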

\begin{proof}
    For every integer $m$ we have: $m^2 = 2\binom {m+1} 2 - m$. So, by (\ref{equ:diff:trivsum}), we get:
    \[
        \sum_{i=1}^t \left(\widehat{\mu}_i^2 - \widehat{\lambda}_i^2\right) =
        \sum_{i=1}^t \left(2\binom{\widehat{\mu}_i+1} 2 - \widehat{\mu_i} - 2\binom{\widehat{\lambda}_i+1} 2 + \widehat{\lambda}_i\right) =
        2r(\lambda, \mu) - n + n.\Qed
    \]
\end{proof}

In order to carry on the computation, we examine carefully all the strata $Z_{\tau}$ with $\tau \neq \tau^0$. We do so by grouping together the $\tau \in \Lambda$ which can appear in $\Theta^{-1}(C_{\mu})$, for fixed $\mu$. We already know that for every $\tau = (\tau_1, \dots, \tau_t)$ such that $\Theta(Z_\tau) = C_\mu$, we have that $\pi(\tau_1) = \mu$.

The conditions $\tau \in \Lambda$ and $\pi(\tau_1) = \mu$ place important combinatorial restrictions in the choice of the $ab$-diagrams $\tau_1, \dots, \tau_t$. We are going to describe them in the following paragraphs.

Let $Z^{(\mu)}$ be the variety $Z$ built from the partition $\mu$ (rather than the partition $\lambda$). We will denote by $\sigma^0$ the unique stratum in $\Lambda^{(\mu)}$ such that $\Theta(Z^{(\mu)}_{\sigma^0}) = C_\mu$. As in the case of $\tau^0$, every $ab$-diagram $\sigma^0_i$ has only rows starting and ending by $a$, in particular they all have odd length, so:
\begin{equation}
    \label{equ:comb:ots}
    \sum_{i=1}^t o(\sigma^0_i) = n = \sum_{i=1}^t o(\tau^0_i).
\end{equation}
The sequence of $ab$-diagrams $\sigma^0$ does not belong to $\Lambda^{(\lambda)}$ because the integers $n_i$ computed from $\mu$ are different from those computed from $\lambda$.

Nonetheless, for every $\tau \in \Lambda^{(\lambda)}$ such that $\pi(\tau_1) = \mu$, the condition $\rho(\tau_{i-1}) = \pi(\tau_i)$ implies that each $\tau_i$ can be obtained from $\sigma^0_i$ by adding an adequate amount of $a$'s and $b$'s letters. For example, if $i=0$ we need some $b$'s, possibly.

For each $i = 1, \dots, t$, let $d_i$ be the list of $a$'s and $b$'s we need to add to $\sigma^0_i$ in order to obtain $\tau_i$. Let $d_i^a$ (resp. $d_i^b$) be the number of $a$'s (resp. $b$'s) in $d_i$. We remark that $d_i^a$ and $d_i^b$ depend only on $(\lambda, \mu)$ and not on the particular $\tau$. In fact, we can compute $d_i^a$ by taking the differences
\begin{equation}
    \label{equ:comb:dlist}
    d_i^a = \sum_{j=i}^t \left(\widehat{\lambda}_j - \widehat{\mu}_j\right);
\end{equation}
and therefore we have $d_i^b = d_{i+1}^a$.

\begin{example}
    Let $\tau \in \Lambda^{(\lambda)}$ and let $\sigma^0 \in \Lambda^{(\mu)}$. Suppose that the $i$-th diagrams are the following:
    \[
        \sigma^0_i =
        \begin{array}{l}
                aba \\
                aba \\
                b
            \end{array};
    \qquad
    \tau_i =
            \begin{array}{l}
                ababa \\
                aba \\
                ba \\
                ab
            \end{array}.
    \]
    Then, we have $d_i = (a, a, a, b, b)$, $d_i^a = 3$, $d_i^b = 2$.
\end{example}

Given an $ab$-diagram $\delta$ and a list $d$ of $a$'s and $b$'s, we define $\aug_\delta(d)$ to be the set of ortho-symmetric $ab$-diagrams obtainable from $\delta$ by adding the letters in $d$.
\begin{example}
    Let $d = (a, b)$ and
    \[
    \delta =
        \begin{array}{l}
            aba \\
            aba \\
            b
        \end{array}.
    \]
    Then $\aug_\delta(d)$ has three elements:
    \[
        \aug_\delta(d) = \left\{
        \begin{array}{l}
            ababa \\
            aba \\
            b
        \end{array},\quad
        \begin{array}{l}
            aba \\
            aba \\
            bab
        \end{array},\quad
        \begin{array}{l}
            aba \\
            aba \\
            a \\
            b \\
            b
        \end{array}\right\}.
    \]
\end{example}

\begin{lemma}
    \label{lem:comb:maxab}
    Let $\delta^0$ be an $ab$-diagram with rows starting and ending only with $a$. Let $d$ be a list of $d^a$ $a$'s and $d^b$ $b$'s. Let $\delta \in \aug_{\delta^0}(d)$. Then the following holds:
    \[
        o(\delta) - 2\Delta(\delta) - o(\delta^0) \leq \max\{d^a, d^b\}.
    \]
\end{lemma}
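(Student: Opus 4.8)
The plan is to evaluate $o(\delta)-2\Delta(\delta)-o(\delta^0)$ by decomposing $\delta$ into the indecomposable pieces $\alpha_k,\beta_k,\varepsilon_k$ of \cref{tab:ortoab:indec}, to reduce the claim to an inequality involving only the rows of $\delta$ that start with $b$, and then to control those rows by a balance count between the added $a$'s and the added $b$'s.

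First I would record the bookkeeping. Every row of $\delta^0$ starts and ends with $a$, hence has odd length and is of type $\alpha_k$; so $o(\delta^0)$ equals the number $N_0$ of rows of $\delta^0$ and $\Delta(\delta^0)=0$. Keeping the notation $a_{2k+1}$ (resp.\ $b_{2k+1}$) for the number of rows of $\delta$ of length $2k+1$ starting with $a$ (resp.\ $b$), one has $o(\delta)=\sum_k(a_{2k+1}+b_{2k+1})$ and $\Delta(\delta)=\sum_k a_{2k+1}b_{2k+1}$. Counting $a$'s minus $b$'s over all of $\delta$ (each $\alpha_k$ contributes $+1$, each $\beta_k$ contributes $-1$, each $\varepsilon_k$-pair $0$) gives $\sum_k a_{2k+1}-\sum_k b_{2k+1}=N_0+d^a-d^b$, and combining this with the expression for $o(\delta)$ yields
\[
o(\delta)-2\Delta(\delta)-o(\delta^0)=(d^a-d^b)+2\bigl(B-\Delta(\delta)\bigr),\qquad B:=\sum_k b_{2k+1}.
\]
The decisive structural fact is that any $b$ occupying an end of a row of $\delta$ is necessarily an added letter, since the rows of $\delta^0$ begin and end with $a$ and survive as contiguous substrings of the rows of $\delta$; counting these end-$b$'s gives $b_1+2\sum_{k\ge1}b_{2k+1}\le d^b$.

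I would then split according to whether $\delta$ contains the length-one row $a$, i.e.\ whether $a_1\ge1$. If $a_1\ge1$, then in $B-\Delta(\delta)=\sum_k b_{2k+1}(1-a_{2k+1})$ the $k=0$ term is nonpositive, so $B-\Delta(\delta)\le\sum_{k\ge1}b_{2k+1}\le d^b/2$; the displayed identity then gives $o(\delta)-2\Delta(\delta)-o(\delta^0)\le d^a\le\max\{d^a,d^b\}$, as wanted.

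The case $a_1=0$ is the crux, and here the estimate $2(B-\Delta)\le d^b$ may fail, so I would bring in the augmentation structure. Each row of $\delta$ is either the extension of a unique row of $\delta^0$ — landing on a row of type $\alpha$, $\varepsilon$ or $\beta$ — or a wholly new row; recording in each row the number of added $b$'s minus the number of added $a$'s gives $0,+1,+2$ when an old row is extended to an $\alpha$-, $\varepsilon$- or $\beta$-row, and $-1,+1,0$ for a new $\alpha$-, $\beta$- or $\varepsilon$-row. Summing over all rows yields a balance identity for $d^b-d^a$; combined with the observation that $B$ equals the number of old rows extended to a $\beta$-row plus the number of new $\beta$-rows, this lets me substitute for $B$ in the displayed identity. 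After discarding the manifestly nonpositive contributions, the inequality collapses to $2\,n^{\mathrm{new}}_{\alpha}\le d^a$, where $n^{\mathrm{new}}_{\alpha}$ denotes the number of wholly new $\alpha$-rows. Since $a_1=0$ forces every new $\alpha$-row to have length at least $3$, each uses at least two added $a$'s, so this inequality holds and gives $o(\delta)-2\Delta(\delta)-o(\delta^0)\le d^b\le\max\{d^a,d^b\}$. The main obstacle is exactly this case: naive per-letter or per-$\beta$-row estimates only yield the bound $d^a+d^b$ and lose the factor separating $\max\{d^a,d^b\}$ from the sum, whereas the balance identity between added $a$'s and $b$'s recovers the sharp constant.
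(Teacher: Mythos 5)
Your proof is correct, but it takes a genuinely different route from the paper's. The paper's argument is a short, uniform estimate: it bounds $o(\delta)-o(\delta^0)$ by the number $|L|+|S_a|+|S_b|$ of wholly new rows, absorbs the smaller of $|S_a|,|S_b|$ using $\Delta(\delta)\geq a_1b_1\geq |S_a|\,|S_b|$ together with the elementary inequality $x+y-2xy\leq\max\{x,y\}$ for nonnegative integers, and finishes by noting that each row of $L\cup S_a$ consumes an added $a$ and each row of $L\cup S_b$ an added $b$. You instead derive the exact conservation identity $o(\delta)-2\Delta(\delta)-o(\delta^0)=(d^a-d^b)+2\bigl(B-\Delta(\delta)\bigr)$ from the per-row count of $a$'s minus $b$'s, and then split into the cases $a_1\geq 1$ (settled by counting end-$b$'s, which are necessarily added letters) and $a_1=0$ (settled by the row-by-row balance of added $b$'s minus added $a$'s, which reduces the claim to $2\,n^{\mathrm{new}}_{\alpha}\leq d^a$, valid since each new $\alpha$-row then has length at least $3$). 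I checked the identity, the contribution table $0,+1,+2$ and $-1,+1,0$, and both case conclusions; they are all correct. What your approach buys: the identities are sharper than the paper's one-sided estimates, they explain structurally why the bound is $\max\{d^a,d^b\}$ rather than $d^a+d^b$, and they make the refinement \cref{lem:comb:maxab2} fall out for free (with $d=(b)$ the added $b$ is forced into a new singleton row and your identity gives exactly $1-2l$). What it costs: length, a case split, and a heavier reliance on the precise meaning of $\aug_{\delta^0}(d)$ --- your balance identity presumes that distinct rows of $\delta^0$ persist as substrings of distinct rows of $\delta$ (no merging of old rows), which is indeed the intended reading (the paper's three-element example of $\aug_\delta(d)$ confirms it), and even under a laxer reading merging would only turn your equality into an inequality in the favorable direction, so no actual gap results.
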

\begin{proof}
    The $ab$-diagram $\delta$ is obtained from $\delta^0$ by adding $d^a$ $a$'s and $d^b$ $b$'s.
    
    Let $L$ (resp. $S$) be the set of rows of $\delta$ longer than $1$ (resp. of length $1$) built using only letters in $d$. As $o(\delta)$ counts some of the rows in $\delta$, we compare the difference $o(\delta) - o(\delta^0)$ with the number of rows built only with letters in $d$ and we get $o(\delta) - o(\delta^0) \leq |L| + |S|$.
    
    Let $S_a$ (resp. $S_b$) the rows of $S$ starting with $a$ (resp. $b$), so that $S = S_a \sqcup S_b$. Recall that $a_i$ (resp. $b_i$) is the number of rows of length $i$ starting with $a$ (resp. $b$); in this particular $ab$-diagram $\delta$ we have $|S_b| = b_1$ and $|S_a| \leq a_1$.
    
    \begin{align*}
        o(\delta) - o(\delta^0) - 2\Delta(\delta) \leq & |L| + |S| -2\sum_{i \text{ odd}} a_i b_i\\
        \leq & |L| + |S| -2 a_1 b_1 \\
        \leq & |L| + |S_a| + |S_b| - 2 |S_a| |S_b|\\
        \leq & |L| + \max\{|S_a|, |S_b|\}.
    \end{align*}
    If $|S_a| \geq |S_b|$, then $|L| + |S_a| \leq d_i^a$ because each row in $L$ or in $S_a$ contains at least one $a$. If the opposite holds true, then $|L| + |S_b| \leq d_i^b$ for the same reason. Thus the lemma is proven.\Qed
\end{proof}

We can introduce a small, but very important, improvement on \cref{lem:comb:maxab}.

\begin{lemma}
    \label{lem:comb:maxab2}
    In the same setting of \cref{lem:comb:maxab}, if we furthermore assume that $d = (b)$, that is $d^a = 0$ and $d^b = 1$, and that $\delta^0$ has $l$ rows of length $1$, we have the stronger equality:
    \[
        o(\delta) - 2\Delta(\delta) - o(\delta^0) = \max\{d^a, d^b\} - 2l = 1 - 2l.
    \]
\end{lemma}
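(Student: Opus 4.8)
The plan is to first determine $\aug_{\delta^0}((b))$ exactly and then read off $o$ and $\Delta$ directly from the definitions. Since here we augment $\delta^0$ by a single letter $b$, any $\delta \in \aug_{\delta^0}((b))$ arises either by attaching this $b$ to one end of an existing row of $\delta^0$, or by creating a brand new row consisting of the single letter $b$. The first thing to prove is that only the second operation produces an ortho-symmetric diagram, so that $\aug_{\delta^0}((b))$ consists of the single diagram
\[
    \delta = \delta^0 \sqcup \beta_0,
\]
where $\beta_0$ denotes the length-$1$ row $b$. Once this is established, the lemma reduces to a one-line computation, so the whole content sits in this uniqueness step.

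To justify the uniqueness I would invoke the classification of ortho-symmetric $ab$-diagrams, i.e.\ that a diagram is ortho-symmetric precisely when it is a disjoint union of the indecomposables $\alpha_k,\beta_k,\varepsilon_k$ of \cref{tab:ortoab:indec}, equivalently when it has property $\mathcal P$. Every row of $\delta^0$ starts and ends with $a$, hence is of type $\alpha_k$ and has odd length. Attaching the extra $b$ to such a row would turn it into a single row of even length, of the form $(ab)^{k+1}$ or $(ba)^{k+1}$; such a row is not one of the single-row indecomposables of the table, and it cannot be completed to an $\varepsilon$-pair since no second even-length row is available (we have added only one letter). Equivalently, attaching the $b$ to a row strictly unbalances the number of occurrences of $(ab)^h$ against those of $(ba)^h$, destroying property $\mathcal P$. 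Thus only the fresh singleton row $\beta_0$ keeps $\delta$ ortho-symmetric, which pins down $\aug_{\delta^0}((b)) = \{\delta^0 \sqcup \beta_0\}$.

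With $\delta = \delta^0 \sqcup \beta_0$ in hand the computation is immediate. Writing $a_i,b_i$ for the row counts of $\delta$, the fact that every row of $\delta^0$ starts with $a$ gives $b_i = 0$ for $\delta^0$, whence $\Delta(\delta^0) = 0$ and $o(\delta^0) = \sum_{i\text{ odd}} a_i$. Passing to $\delta$ changes only $b_1$, raising it from $0$ to $1$, and leaves every $a_i$ unchanged. Since the added row has odd length $1$ we get $o(\delta) = o(\delta^0) + 1$, while the only odd index contributing to $\Delta(\delta)$ is $i=1$, where $a_1 = l$ (the $l$ length-$1$ rows of $\delta^0$, which necessarily read $a$) and $b_1 = 1$, so $\Delta(\delta) = l$. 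Substituting,
\[
    o(\delta) - 2\Delta(\delta) - o(\delta^0) = \bigl(o(\delta^0) + 1\bigr) - 2l - o(\delta^0) = 1 - 2l,
\]
and since $\max\{d^a,d^b\} = \max\{0,1\} = 1$ this equals $\max\{d^a,d^b\} - 2l$, as claimed.

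The main obstacle is the uniqueness step, as it is the only place where the combinatorial structure (rather than a routine count) enters; I expect the cleanest route is the direct property-$\mathcal P$ argument, which sidesteps an explicit case analysis of the indecomposable decomposition. Everything after it is a verification that the crude bound of \cref{lem:comb:maxab} is in fact attained with the sharp correction term $-2l$ coming from the single odd-length contribution $a_1 b_1 = l$ to $\Delta$.
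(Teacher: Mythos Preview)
Your proposal is correct and follows essentially the same approach as the paper's own proof: both argue that the single $b$ cannot be attached to an existing (odd-length, $a$-ended) row of $\delta^0$ without destroying ortho-symmetry, forcing $\delta = \delta^0 \sqcup \beta_0$, and then read off $o(\delta)-o(\delta^0)=1$ and $\Delta(\delta)=a_1b_1=l$. The paper's version is simply terser, asserting the uniqueness step in one sentence where you spell out the property-$\mathcal P$ justification.
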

\begin{proof}
    As all the rows in $\delta^0$ have odd length, we cannot extend exactly one row with letter $b$, otherwise the resulting $ab$-diagram would not be ortho-symmetric. So we must place $b$ on a new row.
    
    In this case, $o(\delta) - o(\delta^0) = 1$ and $\Delta(\delta) = a_1 b_1 = l\cdot 1$.
\Qed\end{proof}

Using this lemma we will be able to introduce the previously announced stronger version of \cref{prop:comb:big}, namely:
\begin{proposition}
    \label{prop:comb:bigr}
    Let $\lambda \geq \mu$ be two partitions and let $Z=Z^{(\lambda)}$ be the variety built from $\lambda$.  Let $Z_{\tau^0}$ be the only stratum of $Z$ with $\pi(\tau^0_1) = \lambda$ and let $Z_\tau$ be a stratum with $\tau=(\tau_1, \dots, \tau_t)$ such that $\pi(\tau_1) = \mu$.
    As in \cref{prop:comb:big}, we suppose that there exists a real number $x$ such that:
    \[
        2r(\lambda, \mu) - c(\lambda, \mu) - q(\lambda, \mu) \geq 4x.
    \]

    If there exists an index $i$ such that $d_i = (b)$ and $\sigma^0_i$ has $l$ rows of length one, then:
    \[
        \dim Z_{\tau^0} \geq \dim Z_\tau + x + l/2.
    \]
\end{proposition}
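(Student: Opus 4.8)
The plan is to repeat the computation that proves \cref{prop:comb:big}, isolate the term indexed by the special $i$, and sharpen it by using the exact formula of \cref{lem:comb:maxab2} in place of the inequality of \cref{lem:comb:maxab}. First I would expand $\dim Z_{\tau^0} - \dim Z_\tau$ from the dimension formula (\ref{equ:zvar:dimz}). Because $\tau^0$ and $\tau$ both lie in $\Lambda^{(\lambda)}$, the dimension-vector entries $n_i$ are those of $\lambda$ in both cases, so the middle sum $\sum_{i=0}^{t-1}\left(\frac12 n_i n_{i+1} - \frac14(n_i+n_{i+1})\right)$ is the same for the two strata and drops out of the difference. The first term leaves $\frac12(\dim C_\lambda - \dim C_\mu)$, and by (\ref{equ:sno:dim}) together with \cref{lem:comb:col} this equals $\frac12 r(\lambda,\mu)$.

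Next I would treat the $o$- and $\Delta$-terms. Every row of each $\tau^0_i$ begins and ends with $a$, hence $\Delta(\tau^0_i)=0$, and by (\ref{equ:comb:ots}) we have $\sum_i o(\tau^0_i)=n=\sum_i o(\sigma^0_i)$. Writing each $\tau_i$ as an element of $\aug_{\sigma^0_i}(d_i)$ and cancelling the two copies of $\frac14 n$, the difference collapses to the clean identity
\[
    \dim Z_{\tau^0} - \dim Z_\tau = \frac12 r(\lambda,\mu) - \frac14 \sum_{i=1}^t \left( o(\tau_i) - 2\Delta(\tau_i) - o(\sigma^0_i) \right).
\]
For each $i$, \cref{lem:comb:maxab} bounds the summand by $\max\{d_i^a, d_i^b\}$, and the combinatorial heart of \cref{prop:comb:big} is the identity $\sum_{i=1}^t \max\{d_i^a, d_i^b\} = c(\lambda,\mu)+q(\lambda,\mu)$, which follows from (\ref{equ:comb:dlist}) via $\sum_i d_i^a = c(\lambda,\mu)$ and $\sum_i \max\{0, \widehat\mu_i - \widehat\lambda_i\} = q(\lambda,\mu)$; substituting recovers $\dim Z_{\tau^0}-\dim Z_\tau \ge \frac14\left(2r(\lambda,\mu)-c(\lambda,\mu)-q(\lambda,\mu)\right)\ge x$.

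Finally I would install the refinement. At the distinguished index $i$ with $d_i=(b)$, \cref{lem:comb:maxab2} upgrades the estimate $o(\tau_i)-2\Delta(\tau_i)-o(\sigma^0_i)\le \max\{d_i^a,d_i^b\}=1$ to the exact value $o(\tau_i)-2\Delta(\tau_i)-o(\sigma^0_i)=1-2l$. Bounding the remaining summands as before, their total $\max$-contribution is now $c(\lambda,\mu)+q(\lambda,\mu)-1$, so the clean identity yields
\[
    \dim Z_{\tau^0} - \dim Z_\tau \ge \frac12 r(\lambda,\mu) - \frac14\left( c(\lambda,\mu)+q(\lambda,\mu)-1 \right) - \frac14(1-2l) = \frac14\left(2r(\lambda,\mu)-c(\lambda,\mu)-q(\lambda,\mu)\right) + \frac l2,
\]
and the hypothesis $2r(\lambda,\mu)-c(\lambda,\mu)-q(\lambda,\mu)\ge 4x$ finishes the proof. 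The argument introduces no new geometry over \cref{prop:comb:big}; the only delicate point is the bookkeeping, namely verifying that replacing one $\le$ by an equality smaller by $2l$, while simultaneously deleting that index's value $1$ from $\sum_i\max\{d_i^a,d_i^b\}$, produces after the global factor $\frac14$ exactly the advertised gain of $l/2$.
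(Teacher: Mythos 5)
Your proof is correct and is essentially the paper's own: the paper obtains \cref{prop:comb:bigr} by running the exact computation used for \cref{prop:comb:big} (expand (\ref{equ:zvar:dimz}), cancel via \cref{lem:comb:col} and (\ref{equ:comb:ots}), bound the summands by \cref{lem:comb:maxab}) and then substituting \cref{lem:comb:maxab2} at the distinguished index, which is precisely your argument, and your final bookkeeping (a gain of $2l$ before the factor $\frac14$, hence $l/2$) matches. The only deviation is that where the paper cites \cref{lem:comb:clem} as an inequality (proved by induction along the chain of \cref{lem:diff:ind}), you assert the equality $\sum_{i=1}^t\max\{d_i^a,d_i^b\}=c(\lambda,\mu)+q(\lambda,\mu)$ via $\sum_i d_i^a=c(\lambda,\mu)$, $\sum_i\max\{0,\widehat\mu_i-\widehat\lambda_i\}=q(\lambda,\mu)$ and the telescoping relation $d_i^b=d_{i+1}^a$; this equality is in fact true and your sketch of it is valid, but only the $\leq$ direction is needed, so this amounts to a slightly stronger, interchangeable form of the same lemma rather than a different route.
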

The proof of \cref{prop:comb:bigr} will be given together with the proof of \cref{prop:comb:big} at the end of this section.

We want an estimate of the sum of the terms $\max\{d_i^a, d_i^b\}$ depending only on $\lambda, \mu$.

\begin{lemma}
    \label{lem:comb:clem}
    Let $\lambda \geq \mu$ be partitions, so that we can define $d_i^a, d_i^b$ for all $i = 1, \dots, t$. Then
    \begin{equation}
        \label{equ:comb:clem}
        \sum_{i=1}^{t} \max\{d_i^a, d_i^b\} \leq c(\lambda, \mu) + q(\lambda, \mu).
    \end{equation}
\end{lemma}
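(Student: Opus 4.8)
The plan is to establish the sharper fact that (\ref{equ:comb:clem}) holds with \emph{equality}, which of course implies the stated inequality. The whole argument rests on a single elementary identity, $\max\{x,y\} = \tfrac12(x+y) + \tfrac12|x-y|$, applied termwise: this splits $\sum_{i=1}^t\max\{d_i^a,d_i^b\}$ into an ``average'' part $\tfrac12\sum_i(d_i^a+d_i^b)$ and a ``spread'' part $\tfrac12\sum_i|d_i^a-d_i^b|$, which I then evaluate separately and match to $c(\lambda,\mu)$ and $q(\lambda,\mu)$ respectively. Note in particular that the $s$-step condition plays no role here; dominance $\lambda\geq\mu$ enters only to guarantee that the $d_i^a,d_i^b$ are genuine non-negative counts of letters, and is not even needed for the formal identity.

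The first step is to read off from (\ref{equ:comb:dlist}) together with $d_i^b = d_{i+1}^a$ the relation
\[
    d_i^a - d_i^b = \widehat{\lambda}_i - \widehat{\mu}_i,
\]
so that the spread part equals $\tfrac12\sum_{i=1}^t|\widehat{\lambda}_i - \widehat{\mu}_i|$. I then identify this with $q(\lambda,\mu)$. Although $q$ is defined through the rows, the number of boxes in which the diagrams of $\lambda$ and $\mu$ differ can be counted by columns just as well: column $j$ contributes $(\widehat{\lambda}_j - \widehat{\mu}_j)^+$ such boxes, and since $|\lambda|=|\mu|$ the ``gray'' and ``black'' boxes balance, giving $\tfrac12\sum_j|\widehat{\lambda}_j - \widehat{\mu}_j| = q(\lambda,\mu)$.

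The core of the computation is to show $\sum_{i=1}^t d_i^a = \sum_{i=1}^t d_i^b = c(\lambda,\mu)$. I would use the column form of $c$: grouping the boxes of a diagram by columns yields $\sum_{B\in\lambda}c(B) = \sum_j j\,\widehat{\lambda}_j$, hence $c(\lambda,\mu) = \sum_{j=1}^t j(\widehat{\lambda}_j - \widehat{\mu}_j)$. Substituting (\ref{equ:comb:dlist}) into $\sum_i d_i^a$ and exchanging the order of summation, each $j$ is counted once for every $i\leq j$, which produces exactly the coefficient $j$; thus $\sum_i d_i^a = \sum_j j(\widehat{\lambda}_j - \widehat{\mu}_j) = c(\lambda,\mu)$. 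For the $d_i^b$ I use $d_i^b = d_{i+1}^a$ and observe that the two boundary terms vanish, $d_1^a = \sum_{j=1}^t(\widehat{\lambda}_j - \widehat{\mu}_j) = 0$ (as $|\lambda|=|\mu|$) and $d_{t+1}^a = 0$ (empty sum), so the index shift gives $\sum_i d_i^b = \sum_i d_i^a = c(\lambda,\mu)$. Combining the two parts yields $\sum_i\max\{d_i^a,d_i^b\} = c(\lambda,\mu) + q(\lambda,\mu)$.

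The only genuinely delicate point is the index bookkeeping in the two double sums — making sure $i$ runs only up to $t=\lambda_1$, that the summation swap counts each $j$ with the right multiplicity, and that the boundary terms $d_1^a$ and $d_{t+1}^a$ are handled correctly — together with the justification of the column-count formulas for $c$ and $q$, both of which are recorded in the paper only in their row form (cf.\ (\ref{equ:diff:trivsum})). Everything else is routine algebra, and the resulting equality is in fact independent of the $\lambda\geq\mu$ hypothesis.
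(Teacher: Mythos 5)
Your proof is correct, and it takes a genuinely different---and in fact sharper---route than the paper's. The paper first treats the case $q(\lambda,\mu)=1$ directly (a single lowered box, where it obtains the equality $\sum_i\max\{d_i^a,d_i^b\}=c(\lambda,\mu)+1$), and then handles the general case by running along the chain of \cref{lem:diff:ind}, using additivity of $c$, $d_i^a$, $d_i^b$ on consecutive pairs together with the subadditivity $\sum_j\max\{x_j,y_j\}\geq\max\{\sum_j x_j,\sum_j y_j\}$; this machinery only yields the inequality as stated. You instead split each term via $\max\{x,y\}=\tfrac12(x+y)+\tfrac12|x-y|$ and evaluate both pieces in closed form: the relations $d_i^a-d_i^b=\widehat{\lambda}_i-\widehat{\mu}_i$, $\sum_i d_i^a=\sum_i d_i^b=\sum_j j(\widehat{\lambda}_j-\widehat{\mu}_j)=c(\lambda,\mu)$ (by swapping the order of summation in (\ref{equ:comb:dlist}), with the boundary terms $d_1^a=d_{t+1}^a=0$), and $\tfrac12\sum_i|\widehat{\lambda}_i-\widehat{\mu}_i|=q(\lambda,\mu)$ (counting the symmetric difference of the diagrams by columns instead of rows) are all correct, and I verified the resulting identity on examples. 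What your approach buys is twofold: it bypasses \cref{lem:diff:ind} entirely, and it reveals that the lemma's bound is actually an \emph{equality}, i.e.\ the paper's max-swap step never loses anything (consistently with the monotonicity clause of \cref{lem:diff:ind}, which prevents a column from both gaining and losing boxes along the chain). The price is the double-counting justification of the column forms of $c$ and $q$, which is routine. One small caveat: your closing remark that the identity is independent of $\lambda\geq\mu$ needs the index range enlarged to $\max(\lambda_1,\mu_1)$ when $\mu_1>\lambda_1$, since otherwise (\ref{equ:comb:dlist}) and your sums truncate columns of $\mu$; under the lemma's hypothesis this is moot, as dominance forces $\mu_1\leq t$, and it does not affect the correctness of your proof of the stated result.
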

\begin{proof}
    First we prove the claim for $q(\lambda, \mu) = 1$.
    
    In this case, $\mu$ is obtained from $\lambda$ by moving down a single box $B$. Let us call $c_1$ (resp. $c_2$) the column where $B$ lies in $\mu$ (resp. $\lambda$); so $c_1 < c_2$. By (\ref{equ:comb:dlist}), one gets $d_i^a = 1$ for each $c_1 < i \leq c_2$, and $0$ otherwise. Moreover one also gets that $d_i^b = 1$ for each $c_1 \leq i < c_2$ and $0$ otherwise. Therefore $\max\{d_i^a, d_i^b\} = 1$ for all $c_1 \leq i \leq c_2$ and $0$ otherwise. Therefore
    \[
        \sum_{i=1}^t \max\{d_i^a, d_i^b\} = c_2 - c_1 + 1,
    \]
    while
    \[
        c(\lambda, \mu) = c_2 - c_1,
    \]
    so
    \begin{equation}
        \label{equ:comb:qone}
        \sum_{i=1}^t \max\{d_i^a, d_i^b\} = c(\lambda, \mu) + 1
    \end{equation}
    and the conclusion holds in this case.
    
    In the general case, let $q:=q(\lambda, \mu)$ so, by \cref{lem:diff:ind}, we get a sequence of partitions
    \[
        \lambda = \lambda^0 > \lambda^1 > \dots > \lambda^{q} = \mu
    \]
    such that $q(\lambda^j, \lambda^{j+1}) = 1$ for each $j = 0, \dots, q-1$. By (\ref{equ:comb:qone}), for every $j$, we have
    \[
        \sum_{i=1}^t \max\{d_i^a, d_i^b\}(\lambda^j, \lambda^{j+1}) = c(\lambda^j, \lambda^{j+1}) + 1
    \]
    and, summing up over $j$,
    \[
        \sum_{i=1}^t \sum_{j=0}^{q-1}\max\{d_i^a, d_i^b\}(\lambda^j, \lambda^{j+1}) = \sum_{j=0}^{q-1}c(\lambda^j, \lambda^{j+1}) + q.
    \]
    
    By (\ref{equ:diff:cadd}), the function $c$ is additive on the pairs ($\lambda^j, \lambda^{j+1}$), therefore the right hand side is equal to $c(\lambda, \mu) + q$. Also $d_i^a$ and $d_i^b$ are additive functions on the pairs $(\lambda^j, \lambda^{j+1})$ (as it follows from (\ref{equ:comb:dlist})), therefore
    \begin{align*}
        \sum_{j=0}^{q-1}\max\{d_i^a, d_i^b\}(\lambda^j, \lambda^{j+1}) 
        & = \sum_{j=0}^{q-1}\max\left\{d_i^a(\lambda^j, \lambda^{j+1}), d_i^b(\lambda^j, \lambda^{j+1})\right\}\\
        & \geq \max\left\{\sum_{j=0}^{q-1}d_i^a(\lambda^j, \lambda^{j+1}), \sum_{j=0}^{q-1}d_i^b(\lambda^j, \lambda^{j+1})\right\} \\
        & = \max\left\{d_i^a(\lambda, \mu), d_i^b(\lambda, \mu)\right\} = \max\{d_i^a, d_i^b\}
    \end{align*}
    and the conclusion follows.
\Qed\end{proof}

We are finally able to prove both \cref{prop:comb:big} and \cref{prop:comb:bigr}.
\begin{proof}[Proof of \cref{prop:comb:big} and \cref{prop:comb:bigr}]
    We start from the dimension formula of the strata (\ref{equ:zvar:dimz}). We recall that $\Delta(\tau^0) = 0$ as no $ab$-diagram of $\tau^0$ has rows starting with $b$. Therefore
    \begingroup
    \allowdisplaybreaks
    \begin{align*}
         4\big(\dim Z_{\tau^0} - \dim Z_\tau - x\big) = &\ 2 (\dim C_{\pi(\tau_1^0)} - \dim C_{\pi(\tau_1)})+\\
         & + \left(o(\tau^0) - 2\Delta(\tau^0)\right) - \left(o(\tau) - 2\Delta(\tau)\right) - 4x\\
         = & \ 2 (\dim C_{\lambda} - \dim C_{\mu}) + o(\tau^0) - o(\tau) + 2\Delta(\tau) - 4x\\
         = & \sum_{i=1}^{t} \left(\widehat{\mu}_i^2-\widehat{\lambda}_i^2\right) + \sum_{i=1}^t \left(o(\tau^0_i) - o(\tau_i) + 2\Delta(\tau_i)\right) - 4x\\
        \text{(by \cref{lem:comb:col})} = &\ 2r(\lambda, \mu) + \sum_{i=1}^t \left(o(\tau^0_i) - o(\tau_i) + 2\Delta(\tau_i)\right) - 4x\\
        \text{(by (\ref{equ:comb:ots}))} = &\ 2r(\lambda, \mu) + \sum_{i=1}^t \left(o(\sigma^0_i) - o(\tau_i) + 2\Delta(\tau_i)\right) - 4x\\
        \geq &\ 2r(\lambda, \mu) - 4x\ + \\
        & \ -\sum_{i=1}^t \max_{\tau_i \in \aug_{\sigma^0_i}(d_i)} \big(o(\tau_i) - 2\Delta(\tau_i) - o(\sigma^0_i) \big)\\
        \text{(by \cref{lem:comb:maxab})} \geq &\ 2r(\lambda, \mu) -  \sum_{i=1}^t \max\{d_i^a, d_i^b\} - 4x\\
        \text{(by \cref{lem:comb:clem})} \geq &\ 2r(\lambda, \mu) - c(\lambda, \mu) - q(\lambda, \mu) - 4x.
    \end{align*}
    \endgroup
    
    Therefore $\dim Z_{\tau^0} - \dim Z_\tau \geq x$ is guaranteed as soon as $2r(\lambda, \mu) - c(\lambda, \mu) - q(\lambda, \mu) \geq 4x$.
    
    In order to obtain the sharper result of \cref{prop:comb:bigr}, it is enough to use  \cref{lem:comb:maxab2} in place of \cref{lem:comb:maxab} in the second to last step.
\Qed\end{proof}

\section{Complete intersection conditions}
\label{sec:ci}
In this section we want to give condition under which we can make sure that $Z$ is a complete intersection variety.

\begin{proposition}
    \label{prop:ci:codim}
    Let $\lambda$ be a partition and recall that $Z = \Phi^{-1}(0) \subseteq M$. We have
    \[
        \codim_M(Z_{\tau^0}) = \dim N.
    \]
\end{proposition}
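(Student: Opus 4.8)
The plan is to reduce the statement to a purely numerical identity. Since $Z_{\tau^0}$ is a locally closed subvariety of the affine space $M$, its codimension is simply $\dim M - \dim Z_{\tau^0}$, so it suffices to prove the equality $\dim Z_{\tau^0} = \dim M - \dim N$. I will compute the three dimensions separately in terms of the integers $n_i = \sum_{j>i}\widehat\lambda_j$ and then check that they balance.

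The first two dimensions are immediate from the definitions. Since $M = L(V_0,V_1) \times \dots \times L(V_{t-1},V_t)$, one has $\dim M = \sum_{i=0}^{t-1} n_i n_{i+1}$. Since $N = \p(V_1)\times\dots\times\p(V_{t-1})$ and $\p(V_i)$ is the space of self-adjoint endomorphisms of an $n_i$-dimensional space carrying a non-degenerate symmetric form, $\dim \p(V_i) = \binom{n_i+1}{2}$, whence $\dim N = \sum_{i=1}^{t-1}\binom{n_i+1}{2}$. For $\dim Z_{\tau^0}$ I feed $\tau^0$ into the dimension formula (\ref{equ:zvar:dimz}). The crucial simplifications are that every $ab$-string of $\tau^0$ starts and ends with $a$, hence has odd length; this forces $\Delta(\tau^0_i) = 0$ for all $i$ and $\sum_{i=1}^t o(\tau^0_i) = n$ by (\ref{equ:comb:ots}), while $\pi(\tau^0_1) = \lambda$ makes the leading term equal to $\tfrac12 \dim C_\lambda = \tfrac14\big(n^2 - \sum_{i}\widehat\lambda_i^{\,2}\big)$ via (\ref{equ:sno:dim}).

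Substituting these values into (\ref{equ:zvar:dimz}), the linear contribution $-\tfrac14\sum_{i=0}^{t-1}(n_i+n_{i+1}) = -\tfrac n4 - \tfrac12\sum_{i=1}^{t-1}n_i$ cancels against the $+\tfrac14 n$ coming from the $o$-terms and against the linear part of $\dim N$, so the desired equality $\dim Z_{\tau^0} = \dim M - \dim N$ collapses (after multiplying through by $4$) to the single identity
\[
n^2 - \sum_{i=1}^t \widehat\lambda_i^{\,2} + 2\sum_{i=1}^{t-1} n_i^2 = 2\sum_{i=0}^{t-1} n_i n_{i+1}.
\]
To close this I would write $\widehat\lambda_i = n_{i-1}-n_i$, expand $\sum_{i=1}^t (n_{i-1}-n_i)^2$, and use the boundary values $n_0 = n$ and $n_t = 0$; this gives $\sum_{i}\widehat\lambda_i^{\,2} = n^2 + 2\sum_{i=1}^{t-1}n_i^2 - 2\sum_{i=0}^{t-1} n_i n_{i+1}$, which is exactly the rearranged identity. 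There is no real obstacle here: the whole argument is a direct evaluation of the dimension formula, and the only point that demands care is keeping the summation ranges and the boundary indices $n_0, n_t$ straight throughout the cancellation.
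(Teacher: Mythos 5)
Your proposal is correct and follows essentially the same route as the paper: both compute $\dim M$ and $\dim N$ directly, evaluate $\dim Z_{\tau^0}$ from formula (\ref{equ:zvar:dimz}) using $\Delta(\tau^0_i)=0$, $\pi(\tau^0_1)=\lambda$ with (\ref{equ:sno:dim}), and the fact that all $ab$-strings of $\tau^0$ have odd length, and then reduce everything to the algebraic identity obtained from $\widehat\lambda_i = n_{i-1}-n_i$, $n_0=n$, $n_t=0$. The only cosmetic difference is that you sum the $o$-terms at once via (\ref{equ:comb:ots}) instead of noting $o(\tau^0_i)=\widehat\lambda_i$ and telescoping, which changes nothing of substance.
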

\begin{proof}
    In order to prove the equality of the required dimensions, we will work with the dimension formula given by (\ref{equ:zvar:dimz}). We are going to prove that
    \begin{equation}
        \label{equ:ci:codim}
        \dim Z_{\tau^0} = \dim M - \dim N.
    \end{equation}
    
    We immediately have:
    \[
        \dim M = \sum_{i=1}^t \dim L(V_{i-1},V_i) = \sum_{i=1}^t n_{i-1}n_i
    \]
    and
    \[
        \dim N = \sum_{i=1}^{t-1} \dim \p(V_i) = \sum_{i=1}^{t-1}\frac 1 2 n_i^2 + \sum_{i=1}^{t-1}\frac 1 2 n_i,
    \]
    so
    \[
        \dim M - \dim N = \sum_{i=1}^t n_{i-1}n_i - \frac 1 2\sum_{i=1}^{t-1} n_i^2 - \frac 1 2\sum_{i=1}^{t-1} n_i.
    \]

    On the other hand, we have:
    \[
        \begin{split}
            \dim Z_{\tau^0} =
            \frac 1 2 \dim C_{\pi(\tau^0_1)} + \sum_{i=0}^{t-1}\left(\frac 1 2 n_in_{i+1} - \frac 1 4 (n_i + n_{i+1})\right) + \\
            +\sum_{i=1}^t \left(\frac 1 4  o(\tau^0_i) -\frac 1 2\Delta(\tau^0_i)\right).
        \end{split}
    \]
    
    We have that $\pi(\tau^0_1) = \lambda$; we then use formula (\ref{equ:sno:dim}).
    
    We also have that $o(\tau^0_i)$ is precisely the number of rows of $\tau^0_i$ (as they all have odd length), the number of rows of $\tau^0_i$ is equal to the number of rows of $\lambda^i :=\pi(\tau^0_i)$, the partition $\lambda^i$ has clearly $\widehat{\lambda}^i_1$ rows and $\widehat{\lambda}^i_1 = \widehat{\lambda}_i$, that is the $i$-th column of $\lambda$.
    
    Finally, we have that $\Delta(\tau^0_i) = 0$, because there are no rows in $\tau^0_i$ starting with $b$.
    
    Therefore:
    \[
        \dim Z_{\tau^0} = \frac 1 4 \left(n_0^2- \sum_{i=1}^t \widehat\lambda_i^2\right) + \sum_{i=0}^{t-1}\left(\frac 1 2 n_in_{i+1} - \frac 1 4 (n_i + n_{i+1})\right) + \frac 1 4 \sum_{i=1}^t\widehat{\lambda}_i;
    \]
    we recall that $\widehat\lambda_i = n_{i-1} - n_i$ and that $n_0 = n = |\lambda|$, $n_t=0$; so
    \begin{align*}
        \dim Z_{\tau^0} =& \frac 1 4 \left(n_0^2- \sum_{i=1}^t (n_{i-1}-n_i)^2\right) +\\
        &+ \frac 1 2\sum_{i=0}^{t-1} n_in_{i+1} - \frac 1 4 n_0 - \frac 1 2\sum_{i=1}^{t-1}n_i + \frac 1 4 \sum_{i=1}^tn_{i-1}-n_i\\
        =&- \frac 1 2\sum_{i=1}^{t-1} n_i^2 + \frac 1 2\sum_{i=1}^{t-1} n_{i-1}n_i + \frac 1 2 \sum_{i=0}^{t-1}n_in_{i+1} - \frac 1 2 \sum_{i=1}^{t-1} n_i\\
        =& \dim M - \dim N.\Qed
    \end{align*}
\end{proof}

We can proceed to the main result of this section.

\begin{proposition}
    \label{pro:ci:ci}
    If $\lambda$ satisfies the $2$-step condition, then $Z$ is a complete intersection variety.
\end{proposition}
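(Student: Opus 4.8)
The plan is to recognise that, since $Z = \Phi^{-1}(0)$ sits inside the affine space $M$ as the zero locus of the $\dim N$ coordinate functions of $\Phi$, it is a complete intersection precisely when it is equidimensional of the expected dimension $\dim M - \dim N$. Indeed, by the generalised principal ideal theorem every irreducible component of $Z$ has codimension at most $\dim N$ in $M$, so each component has dimension at least $\dim M - \dim N$; thus the entire content of the proposition is the reverse inequality $\dim Z \leq \dim M - \dim N$. By \cref{prop:ci:codim} the right-hand side equals $\dim Z_{\tau^0}$, so it suffices to prove that $\dim Z_\tau \leq \dim Z_{\tau^0}$ for every stratum $Z_\tau$ with $\tau \in \Lambda^{(\lambda)}$.

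To establish this stratum-by-stratum bound I would fix $\tau$ and set $\mu = \pi(\tau_1)$, recalling that $\mu \leq \lambda$. When $\mu = \lambda$ the only stratum is $Z_{\tau^0}$ itself and there is nothing to check. When $\mu < \lambda$, the hypothesis that $\lambda$ satisfies the $2$-step condition lets me apply \cref{lem:diff:usef} with $s = 2$, which gives
\[
    2\,r(\lambda, \mu) \geq c(\lambda, \mu) + q(\lambda, \mu),
\]
that is, $2 r(\lambda, \mu) - c(\lambda, \mu) - q(\lambda, \mu) \geq 0$. Taking $x = 0$ in \cref{prop:comb:big}, this inequality yields exactly $\dim Z_{\tau^0} - \dim Z_\tau \geq 0$.

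Combining the two steps, every stratum of $Z$ has dimension at most $\dim Z_{\tau^0} = \dim M - \dim N$, so $\dim Z = \dim Z_{\tau^0}$ and, together with the automatic lower bound from the first paragraph, every irreducible component of $Z$ has dimension exactly $\dim M - \dim N$. Hence $Z$ is a complete intersection. The combinatorial heart of the argument has already been isolated in \cref{lem:diff:usef} and \cref{prop:comb:big}, so the only delicate point remaining is numerical: it is the precise value $s = 2$ that forces $2r(\lambda,\mu) - c(\lambda,\mu) - q(\lambda,\mu) \geq 0$ for all $\mu < \lambda$, and I expect this to be exactly where the $2$-step hypothesis is genuinely used, since for larger gaps between consecutive rows the inequality — and with it the complete-intersection property — can fail.
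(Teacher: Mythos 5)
Your reduction is correct as far as it goes, and the inequality you prove is genuine: Krull's height theorem gives $\dim W \geq \dim M - \dim N$ for every irreducible component $W$ of $\Phi^{-1}(0)$, and \cref{lem:diff:usef} with $s=2$ fed into \cref{prop:comb:big} with $x=0$ gives $\dim Z_\tau \leq \dim Z_{\tau^0}$ for every stratum; together with \cref{prop:ci:codim} this shows that the scheme $\Phi^{-1}(0)$ is equidimensional of codimension $\dim N$ in $M$, i.e.\ a complete intersection \emph{scheme}. This is exactly the first half of the paper's proof of \cref{lem:ci:majineq}. The gap is what comes next. The proposition is about the \emph{variety} $Z$: to conclude that the variety $Z$ is a complete intersection one must also know that the scheme $\Phi^{-1}(0)$ is reduced, so that the $\dim N$ entries of $\Phi$ actually generate the radical ideal of $Z$; the paper's proof establishes precisely this, concluding that $Z = \overline{Z_{\tau^0}}$ and that $Z$ is reduced. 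Reducedness does not follow from the complete intersection property alone (consider $x^2=0$); here it follows from the complete intersection property combined with generic smoothness, and the only generic smoothness available is \cref{prop:zvar:zsmooth}, which covers only the open stratum $Z_{\tau^0}$.

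Your weak inequality is not enough to run this argument: it leaves open the possibility that some stratum $Z_\tau$ with $\tau \neq \tau^0$ attains the maximal dimension, in which case $Z$ would have an irreducible component whose dense stratum is $Z_\tau$; such a component is necessarily disjoint from the open set $Z_{\tau^0}$, and on it nothing rules out that $\Phi^{-1}(0)$ is everywhere non-reduced. This is exactly why the paper proves the \emph{strict} inequality $\dim Z_{\tau^0} > \dim Z_\tau$ for all $\tau \neq \tau^0$ (\cref{lem:ci:majineq}): strictness, together with Krull's lower bound, forces every component to have $Z_{\tau^0}$ as its dense stratum, whence $Z = \overline{Z_{\tau^0}}$, generic smoothness on every component, and reducedness. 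Upgrading your inequality to a strict one is where the real work lies: one needs either the strict case of \cref{lem:diff:usef} or the refined estimate \cref{prop:comb:bigr} with $l>0$, through the case analysis on the columns and rows of $(\lambda,\mu)$ carried out in the paper. So your closing assessment --- that the only delicate point is the numerical fact that $s=2$ yields $2r(\lambda,\mu) - c(\lambda,\mu) - q(\lambda,\mu) \geq 0$ --- misplaces the difficulty: that bound is the easy half, and the strictness (equivalently, the irreducibility and reducedness of $Z$) is the part your proof is missing.
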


The argument is basically the same used in \cite{kraft1979closures}*{Theorem in 3.3} and \cite{kraft1982geometry}*{Theorem in 5.3}. Here we merely put the pieces together. One of these pieces is the following technical lemma.
\begin{lemma}
    \label{lem:ci:majineq}
    If the partition $\lambda$ satisfies the $2$-step condition and $\tau \in \Lambda$ is a string of $ab$-diagrams different from $\tau^0$, then
    \[
        \dim Z_{\tau^0} > \dim Z_\tau.
    \]
\end{lemma}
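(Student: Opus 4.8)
The plan is to compare $\dim Z_\tau$ with $\dim Z_{\tau^0}$ via \cref{prop:comb:big} and its refinement \cref{prop:comb:bigr}, after reducing to the case $\mu := \pi(\tau_1) < \lambda$. First I would note that $\mu \leq \lambda$ always holds, since $\Theta(Z_\tau) = C_\mu \subseteq \overline{C_\lambda}$, and that $\mu = \lambda$ would force $\tau = \tau^0$ because $\Theta^{-1}(C_\lambda) = Z_{\tau^0}$ is a single stratum; hence $\tau \neq \tau^0$ gives $\mu < \lambda$ strictly. As $\lambda$ satisfies the $2$-step condition, \cref{lem:diff:usef} applies with $s = 2$ and yields $2r(\lambda,\mu) - c(\lambda,\mu) - q(\lambda,\mu) \geq 0$, so I may set $x := \frac14\big(2r(\lambda,\mu) - c(\lambda,\mu) - q(\lambda,\mu)\big) \geq 0$ throughout.

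I would then split according to the sizes of the excesses $\widehat\mu_c - \widehat\lambda_c$ and $\mu_r - \lambda_r$. If some column $c$ has $\widehat\mu_c > \widehat\lambda_c + 1$ or some row $r$ has $\mu_r > \lambda_r + 1$, the last assertion of \cref{lem:diff:usef} makes the inequality strict, so $x > 0$, and \cref{prop:comb:big} immediately gives $\dim Z_{\tau^0} - \dim Z_\tau \geq x > 0$.

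In the complementary case every excess is at most $1$. Here I would single out the smallest column index $i_1$ with $\widehat\mu_{i_1} \neq \widehat\lambda_{i_1}$. Since the head column sums of $\mu$ dominate those of $\lambda$, the first discrepancy satisfies $\widehat\mu_{i_1} = \widehat\lambda_{i_1} + 1$, while the agreement on the first $i_1 - 1$ columns gives $d_{i_1}^a = 0$; together these force $d_{i_1} = (b)$. The crucial point — which I expect to be the main obstacle — is to show that $\sigma^0_{i_1}$ carries at least one row of length one, i.e. $\widehat\mu_{i_1} > \widehat\mu_{i_1+1}$. I would argue by contradiction: writing $w := \widehat\lambda_{i_1}$, if $\widehat\mu_{i_1} = \widehat\mu_{i_1+1} = w+1$ then $\mu$ has $w+1$ rows of length $\geq i_1 + 1$, whereas $\lambda$ has only $w$ rows of length $\geq i_1$; comparing the $(w+1)$-st rows gives $\mu_{w+1} \geq i_1 + 1$ and $\lambda_{w+1} \leq i_1 - 1$, hence $\mu_{w+1} - \lambda_{w+1} \geq 2$, contradicting the hypothesis of this case. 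Thus $\sigma^0_{i_1}$ has $l \geq 1$ rows of length one, and \cref{prop:comb:bigr} yields $\dim Z_{\tau^0} \geq \dim Z_\tau + x + l/2 \geq \dim Z_\tau + 1/2 > \dim Z_\tau$.

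Combining the two cases gives $\dim Z_{\tau^0} > \dim Z_\tau$ for every $\tau \neq \tau^0$, as required. The technical heart is the combinatorial claim in the second case: the $2$-step condition enters in two ways — globally through \cref{lem:diff:usef} to guarantee $x \geq 0$, and locally in that the \emph{absence} of excesses larger than $1$ (precisely the defining feature of this case) is exactly what forces the short row of $\mu$ at the first column of discrepancy, letting the $l/2$ bonus of \cref{prop:comb:bigr} tip the strict inequality.
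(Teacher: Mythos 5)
Your proposal is correct and takes essentially the same route as the paper's proof: the weak inequality comes from \cref{lem:diff:usef} (with $s=2$) combined with \cref{prop:comb:big}, and strictness comes either from the strict form of \cref{lem:diff:usef} when some column or row excess is at least $2$, or else from \cref{prop:comb:bigr} applied at the first discrepant column $i_1$, where $d_{i_1}=(b)$ and $\sigma^0_{i_1}$ has a length-one row. The only cosmetic difference is the organization of cases --- the paper splits into three cases according to the excesses at the column $c=i_1$ and at the row $r=\widehat\mu_c$, while you split globally into \emph{some excess $\geq 2$} versus \emph{all excesses $\leq 1$} --- and your contradiction argument for the length-one row is the paper's direct observation that $\mu_r=c$ and $\lambda_r=c-1$ in disguise.
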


\begin{proof}[Proof of \cref{pro:ci:ci}]
    We start recalling \cref{prop:zvar:zsmooth} and \cref{prop:ci:codim}.
    
    As $Z \setminus Z_{\tau^0}$ consists of finitely many strata $Z_\tau$ and, by \cref{lem:ci:majineq},  $\codim_Z(Z_\tau) \geq 1$, we deduce that $Z = \overline{Z_{\tau^0}}$. That implies that $Z$ is a complete intersection variety smooth in codimension 0 and $Z$ is reduced as a scheme.
\Qed\end{proof}

We can finally use \cref{lem:diff:usef} and propositions \ref{prop:comb:big} and \ref{prop:comb:bigr} to prove \cref{lem:ci:majineq}.
\begin{proof}[Proof of \cref{lem:ci:majineq}]
    Let $\mu = \pi(\tau)$, so in particular $\mu < \lambda$, as $\tau \neq \tau^0$. Therefore \cref{lem:diff:usef} implies
    \begin{equation}
        \label{equ:ci:rcqineq}
        2 r(\lambda, \mu) \geq c(\lambda, \mu) + q(\lambda, \mu)
    \end{equation}
    and, by combining it with \cref{prop:comb:big} we immediately get
    \[
        \dim Z_{\tau^0} \geq \dim Z_\tau.
    \]
    So our concern is reduced to obtain a strict inequality. We will gain the strict inequality either by proving that (\ref{equ:ci:rcqineq}) holds strictly for the pair $(\lambda, \mu)$ or by using \cref{prop:comb:bigr} with $l > 0$.
    
    Let $c$ be the first column such that $\widehat\mu_c > \widehat\lambda_c$ and let $r = \widehat\mu_c$. This means that the first $c-1$ columns of $\lambda$ and $\mu$ are the same and $\mu_r > \lambda_r$. On the basis of $c$ and $r$, we distinguish three cases.
    
    \begin{description}
        \item[$\widehat\mu_c - \widehat\lambda_c \geq 2$]: in this case we can use \cref{lem:diff:usef} with the column $c$ to obtain that
        \[
            2r(\lambda, \mu) > c(\lambda, \mu) + q(\lambda, \mu).
        \]
        \item[$\widehat\mu_c - \widehat\lambda_c = 1$, $\mu_r - \lambda_r \geq 2$]: we can still use \cref{lem:diff:usef}, this time with the row $r$, to obtain that
        \[
            2r(\lambda, \mu) > c(\lambda, \mu) + q(\lambda, \mu).
        \]
        \item[$\widehat\mu_c - \widehat\lambda_c = 1$, $\mu_r - \lambda_r = 1$]: in this case we can use \cref{prop:comb:bigr} with $i = c$ and $l \geq 1$. Indeed, the following two facts are immediately seen: by (\ref{equ:comb:dlist}), $d_c = (b)$; the $ab$-diagram $\sigma^0_c$ has at least one row equal to a single $a$, namely the $\widehat\mu_c$-th row.\Qed
    \end{description}
\end{proof}

\section{Normality conditions}
\label{sec:nor}
In this section we prove \cref{thm:intro:main}. We start by proving the necessary condition as a consequence of the works by Ohta and Sekiguchi.
\begin{theorem}[the \emph{only if} part of \cref{thm:intro:main}]
    \label{thm:nor:nec}
    Let $\lambda$ be a partition of $n$ and suppose that $\lambda$ does not satisfy the $1$-step condition. Then $\overline{C_\lambda} \subseteq \p$ is not normal.
\end{theorem}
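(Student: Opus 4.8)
The plan is to reduce the non-normality of $\overline{C_\lambda}$ to the explicit computations of Sekiguchi \cite{sekiguchi1984nilpotent} and Ohta \cite{ohta1986singularities}, by first reformulating the failure of the $1$-step condition in terms of the dual partition and then passing to a transverse slice at a suitable boundary orbit.

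First I would reformulate the hypothesis combinatorially. The number of columns of $\lambda$ of height exactly $i$ equals $\lambda_i - \lambda_{i+1}$ (with the convention $\lambda_{h+1}=0$), so $\lambda$ fails the $1$-step condition precisely when there is an index $i_0$ with $\lambda_{i_0} - \lambda_{i_0+1} \geq 2$; equivalently, the dual partition $\widehat\lambda$ has a repeated part, i.e. $\lambda$ possesses two adjacent columns of the same height $i_0$. I fix such an $i_0$ and single out this block of equal columns, which will be the source of the singularity.

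Second, since normality is a local property and is unaffected by passing to a smooth transverse slice, I would localize $\overline{C_\lambda}$ along a boundary orbit $C_\mu \subseteq \overline{C_\lambda}$ obtained from $\lambda$ by a minimal degeneration that moves a single box out of the block of equal columns. The Kraft--Procesi method of minimal singularities, adapted to the symmetric-space setting by Ohta \cite{ohta1986singularities} (building on \cites{kraft1980minimal, kraft1982geometry}), identifies the transverse slice to $C_\mu$ inside $\overline{C_\lambda}$ with the closure of a nilpotent orbit in a smaller orthogonal symmetric space $\p'$, governed by a small partition that reproduces a minimal non-normal configuration. Consequently $\overline{C_\lambda}$ is normal in a neighbourhood of $C_\mu$ if and only if this slice closure is normal, and it suffices to treat the minimal models.

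Third, for the base case I would invoke the explicit local computations of Sekiguchi \cite{sekiguchi1984nilpotent} and Ohta \cite{ohta1986singularities}: they show that the closure of the relevant minimal orbit, namely the one whose partition carries a single gap of size exactly $2$, is not a normal variety, by exhibiting that the corresponding local ring fails to be integrally closed. Combining this with the slice identification of the previous step shows that $\overline{C_\lambda}$ is not normal in a neighbourhood of $C_\mu$, hence not normal. The main obstacle is precisely the explicit identification of the transverse slice with a smaller symmetric orbit closure together with the verification that the minimal model is genuinely non-normal; both constitute the technical heart of \cites{sekiguchi1984nilpotent, ohta1986singularities}, and I would rely on them rather than reprove them. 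One subtlety to keep separate is the parity of $n$: for even $n$ some closures fail normality merely because they are reducible, as in the $\lambda=(2)$ example of the introduction, so I would isolate this reducibility phenomenon from the genuinely non-normal irreducible cases before appealing to the slice argument.
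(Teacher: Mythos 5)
Your proposal follows essentially the same route as the paper: fix a row index with $\lambda_i - \lambda_{i+1} \geq 2$, pass to the minimal degeneration $\mu$ that moves one box from row $i$ to row $i+1$, use Ohta's cancellation theorem (Theorem 2 of \cite{ohta1986singularities}, i.e. the symmetric-space version of the Kraft--Procesi transverse slice reduction) to identify the singularity of $\overline{C_\lambda}$ along $C_\mu$ with that of the pair $\lambda' = (m)$, $\mu' = (m-1,1)$ with $m = \lambda_i - \lambda_{i+1}$, and conclude by Sekiguchi's theorem that the nilpotent cone of the orthogonal symmetric space in dimension $m$ is not normal along the subregular orbit. Two minor precisions: the slice model is the regular/subregular pair with gap $m \geq 2$, not necessarily ``exactly $2$'', and since Sekiguchi's result covers all $m \geq 2$ uniformly (including $m = 2$, where non-normality already follows from reducibility of the connected cone), no separate parity analysis is needed.
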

\begin{proof}
    If $\lambda$ does not satisfy the $1$-step condition, then there is an $i$ such that $\lambda_{i} \geq \lambda_{i+1} + 2$. Let
    \[
        \mu = (\lambda_1, \dots, \lambda_{i-1}, \lambda_i - 1, \lambda_{i+1} + 1, \dots, \lambda_h)
    \]
    be another partition of $n$, so that $\mu < \lambda$ and $\mu$ is a minimal degeneration of $\lambda$ (in the same sense of \citelist{\cite{kraft1980minimal}\cite{ohta1986singularities}}). Let $m = \lambda_i - \lambda_{i+1} \geq 2$ and let us define the following two partitions of $m$:
    \begin{align*}
        &\lambda' = (m), \\
        &\mu' = (m-1, 1).
    \end{align*}

    The degeneration $\mu' < \lambda'$ is obtained from $\mu < \lambda$ by removing the common rows and columns (of $\mu$ and $\lambda$). By theorem \cite{ohta1986singularities}*{Theorem 2}, the singularity $C_\mu$ of $\overline{C_\lambda}$ is normal if and only if the singularity $C_{\mu'}$ of $\overline{C_{\lambda'}}$ is normal.

    Moreover, by construction, $C_{\lambda'}$ is the variety of the principal (or regular) nilpotent elements of $\p$ (in dimension $m$); therefore we also have that $\overline{C_{\lambda'}} = \mathcal N(\p)$, that is the cone of the nilpotent elements in $\p$. As $\mu'$ is the minimal degeneration of $\lambda'$, $C_{\mu'}$ is the variety of subregular nilpotent elements of $\p$. The theorem \cite{sekiguchi1984nilpotent}*{\S 3.1, Theorem 7.} proves that the singularity $C_{\mu'}$ is not normal in $\mathcal N(\p)$. So this concludes the \emph{only if} part.
\Qed\end{proof}

We turn to the sufficient condition. We will make use of the construction of the variety $Z$.

\begin{theorem}[the \emph{if} part of \cref{thm:intro:main}]
    \label{thm:nor:suff}
    If the partition $\lambda$ satisfies the $1$-step condition, then $Z$ is a normal variety. In particular $\overline{C_\lambda}$ is a normal variety.
\end{theorem}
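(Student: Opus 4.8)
The plan is to prove that $Z$ is normal via Serre's criterion $R_1+S_2$, and then to descend normality to $\overline{C_\lambda}$ along the quotient $\Theta$. Since the $1$-step condition implies the $2$-step condition, \cref{pro:ci:ci} gives that $Z$ is a complete intersection; by \cref{prop:ci:codim} it is cut out in the affine space $M$ by exactly $\dim N$ equations, so it is Cohen--Macaulay and in particular satisfies $S_2$. Moreover $Z=\overline{Z_{\tau^0}}$ is a cone (the defining map $\Phi$ is homogeneous), hence connected, so once $R_1$ is established we may conclude that $Z$ is normal and integral.

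For $R_1$ I would bound the singular locus. By \cref{prop:zvar:zsmooth} the dense stratum $Z_{\tau^0}$ is smooth, so $\operatorname{Sing}(Z)\subseteq\bigsqcup_{\tau\neq\tau^0}Z_\tau$, and it suffices to prove the estimate $\dim Z_{\tau^0}-\dim Z_\tau\geq 2$ for every $\tau\neq\tau^0$. Writing $\mu=\pi(\tau_1)<\lambda$, this is the exact strengthening of \cref{lem:ci:majineq} from a strict inequality to a gap of $2$, and I would run the same case analysis on the first column $c$ with $\widehat\mu_c>\widehat\lambda_c$. If some column has $\widehat\mu_c\geq\widehat\lambda_c+2$, or some row has $\mu_r\geq\lambda_r+2$, then---since lowering one box changes the length of any fixed column (resp.\ row) by at most one, reading $q(\lambda,\mu)$ as the minimal number of boxes to lower as in the remark after \cref{lem:diff:ind}---at least two boxes must move, so $q(\lambda,\mu)\geq2$ and hence $c(\lambda,\mu)\geq q(\lambda,\mu)\geq2$ by (\ref{equ:diff:ineq}). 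The strict form of \cref{lem:diff:usef} for $s=1$ then gives $r(\lambda,\mu)\geq c(\lambda,\mu)+q(\lambda,\mu)+1\geq5$, whence $2r(\lambda,\mu)-c(\lambda,\mu)-q(\lambda,\mu)\geq r(\lambda,\mu)\geq5$ and \cref{prop:comb:big} yields $\dim Z_{\tau^0}-\dim Z_\tau\geq 5/4$, i.e.\ $\geq2$ since the difference is an integer.

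The delicate case is when every row and column difference between $\lambda$ and $\mu$ is at most $1$: here \cref{lem:diff:usef} need not be strict, the crude bound only gives codimension $\geq1$, and the missing unit must be extracted from the length-one rows. The right tool is the sharpened estimate obtained by using the exact equality of \cref{lem:comb:maxab2} at \emph{every} index $i$ with $d_i=(b)$ (in place of the inequality of \cref{lem:comb:maxab} used inside \cref{prop:comb:big}), namely
\[
\dim Z_{\tau^0}-\dim Z_\tau\ \geq\ \tfrac14\bigl(2r(\lambda,\mu)-c(\lambda,\mu)-q(\lambda,\mu)\bigr)+\tfrac12\!\!\sum_{i\,:\,d_i=(b)}\!\! l_i ,
\]
where $l_i$ is the number of length-one rows of $\sigma^0_i$. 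Since $2r-c-q\geq r\geq c+q\geq2$ always, the first summand is $\geq\tfrac12$, and it would remain to show $\sum_{i:d_i=(b)}l_i\geq2$, which forces the right-hand side to be $\geq\tfrac32$ and hence $\geq2$. The main obstacle is precisely this last combinatorial inequality: the first differing column already contributes $d_c=(b)$ (a direct computation from (\ref{equ:comb:dlist}), exactly as in the third case of \cref{lem:ci:majineq}), and the $1$-step condition forces $\lambda$---and, after tracking the conservation of boxes, also $\mu$---to realise every row length between $1$ and $\lambda_1$, which is what should push the total count of length-one rows at the relevant indices up to at least $2$. Carrying this out for an arbitrary $\mu$, and not merely for a single-box degeneration, is the heart of the argument.

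Finally, the normality of $\overline{C_\lambda}$ follows formally: $\overline{C_\lambda}\cong Z/H$ is the quotient of the now-normal variety $Z$ by the reductive group $H$, and the algebra of invariants of a normal finitely generated algebra under a reductive group is again integrally closed.
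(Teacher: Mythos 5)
Your overall architecture coincides with the paper's: complete intersection via \cref{pro:ci:ci} and \cref{prop:ci:codim}, Serre's criterion reducing normality to the codimension-two estimate $\dim Z_{\tau^0}-\dim Z_\tau\geq 2$ for every $\tau\neq\tau^0$, and descent to $\overline{C_\lambda}$ through the reductive quotient. Your treatment of the case where some column satisfies $\widehat\mu_c\geq\widehat\lambda_c+2$ or some row satisfies $\mu_r\geq\lambda_r+2$ is correct (the strict form of \cref{lem:diff:usef} with $s=1$, together with $c(\lambda,\mu)\geq q(\lambda,\mu)\geq 2$, gives $2r-c-q\geq r\geq c+q+1\geq 5$, and \cref{prop:comb:big} plus integrality finish), and your refinement of \cref{prop:comb:bigr} summing the correction of \cref{lem:comb:maxab2} over \emph{all} indices with $d_i=(b)$ is a legitimate strengthening justified by the same chain of inequalities.

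However, there is a genuine gap, and you flag it yourself: the inequality $\sum_{i\,:\,d_i=(b)}l_i\geq 2$ in the remaining case is never proved, only motivated, and it is precisely where the paper's proof does all of its work. Worse, the route you sketch for it is not correct as stated: $\mu$ need not satisfy the $1$-step condition and need not realise every row length between $1$ and $\lambda_1$ --- for instance $\lambda=(3,2,1)$, $\mu=(3,1,1,1)$ is a degeneration with all row and column differences equal to at most $1$, yet $\mu$ has no row of length $2$. What the paper actually does is this: by \cref{prop:comb:big} and integrality the estimate is automatic unless $x=\frac{1}{4}\bigl(c(\lambda,\mu)+q(\lambda,\mu)\bigr)\leq 1$, which together with $c\geq q\geq 1$ (from (\ref{equ:diff:equcases}) and (\ref{equ:diff:ineq})) leaves exactly the four cases $(q,c)\in\{(2,2),(1,3),(1,2),(1,1)\}$. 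In each of these, $\mu$ is obtained from $\lambda$ by lowering at most two boxes; the landing column $i$ of a lowered box satisfies $d_i=(b)$ by (\ref{equ:comb:dlist}); and the number $l$ of length-one rows of $\sigma^0_i$, i.e.\ the number of rows of $\mu$ of length exactly $i$, is bounded below ($l\geq 3$ in the cases $c=q$, $l\geq 2$ otherwise) by exhibiting explicit row indices such as $\widehat\mu_i=\widehat\lambda_i+1$, $\widehat\lambda_i$, $\widehat\lambda_i-1$, using that a $1$-step partition has pairwise distinct column heights. A single application of \cref{prop:comb:bigr} at that one index then gives $\dim Z_{\tau^0}-\dim Z_\tau\geq \frac{1}{4}(c+q)+\frac{l}{2}>1$, hence $\geq 2$. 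Until this case analysis (or an equivalent) is supplied, your argument only yields $\dim Z_{\tau^0}-\dim Z_\tau\geq 1$ in those four cases, which proves that $Z$ is irreducible and reduced but not that it is normal.
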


As $\lambda$ satisfies the $1$-step condition, also $\lambda$ satisfies the $2$-step condition, so, by \cref{pro:ci:ci}, $Z$ is a complete intersection variety. At this point the proof of \cref{thm:nor:suff} is a direct consequence of the following lemma, as in \cite{kraft1979closures}*{Section 3.7}.

\begin{lemma}
    If the partition $\lambda$ satisfies the $1$-step condition, then $\dim (Z \setminus Z_{\tau^0}) \leq \dim Z - 2$.
\end{lemma}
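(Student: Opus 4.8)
The goal is to show that, under the $1$-step condition, every stratum $Z_\tau$ with $\tau \neq \tau^0$ has codimension at least $2$ in $Z$, i.e. $\dim Z_{\tau^0} - \dim Z_\tau \geq 2$. Since we already know from the complete intersection argument that $Z = \overline{Z_{\tau^0}}$ and $\dim Z = \dim Z_{\tau^0}$, it suffices to produce the bound stratum by stratum. The natural strategy is to apply \cref{prop:comb:big} with $x = 1$: this requires establishing the inequality
\[
    2r(\lambda, \mu) - c(\lambda, \mu) - q(\lambda, \mu) \geq 4,
\]
where $\mu = \pi(\tau_1) < \lambda$, and when that slack is not available we fall back on the sharper \cref{prop:comb:bigr} to recover the missing half-units.

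First I would invoke \cref{lem:diff:usef} with $s = 1$, which under the $1$-step condition gives $r(\lambda, \mu) \geq c(\lambda, \mu) + q(\lambda, \mu)$, and moreover gives a \emph{strict} inequality whenever some column or row of $\mu$ exceeds the corresponding one of $\lambda$ by at least $2$. Combining $r \geq c + q$ with the elementary bound $c(\lambda,\mu) \geq q(\lambda,\mu)$ from \eqref{equ:diff:ineq} yields
\[
    2r(\lambda,\mu) - c(\lambda,\mu) - q(\lambda,\mu) \geq 2r(\lambda,\mu) - 2c(\lambda,\mu) \geq 2q(\lambda,\mu).
\]
Hence whenever $q(\lambda, \mu) \geq 2$ the left side is at least $4$, and \cref{prop:comb:big} with $x=1$ immediately gives $\dim Z_{\tau^0} - \dim Z_\tau \geq 2$. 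This disposes of all strata lying over partitions $\mu$ that are at $q$-distance two or more from $\lambda$.

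The remaining case is $q(\lambda, \mu) = 1$, where $\mu$ is a single-box degeneration of $\lambda$. Here the crude bound only yields slack $\geq 2$, good for codimension $\geq 1/2$, so I must sharpen. As in the proof of \cref{lem:ci:majineq}, let $c$ be the first column with $\widehat\mu_c > \widehat\lambda_c$ and $r = \widehat\mu_c$. If either $\widehat\mu_c - \widehat\lambda_c \geq 2$ or $\mu_r - \lambda_r \geq 2$, the strict version of \cref{lem:diff:usef} upgrades $2r - c - q$ to be at least $4$ (since the quantities are integers and we already have $\geq 2$ with one strict step contributing at least $2$ more), giving codimension $\geq 1$ at worst; but in the $1$-step regime a single box move produces $c(\lambda,\mu)=1$, $q=1$, and the strictness forces $r(\lambda,\mu) \geq 2$, so $2r - c - q \geq 4$ and \cref{prop:comb:big} applies with $x=1$. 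The genuinely delicate case is $\widehat\mu_c - \widehat\lambda_c = 1$ and $\mu_r - \lambda_r = 1$: here I invoke \cref{prop:comb:bigr} with $i = c$. Exactly as in \cref{lem:ci:majineq}, formula \eqref{equ:comb:dlist} gives $d_c = (b)$, and $\sigma^0_c$ contains the single-$a$ row in position $\widehat\mu_c$, so $l \geq 1$. The conclusion of \cref{prop:comb:bigr} then reads $\dim Z_{\tau^0} \geq \dim Z_\tau + x + l/2$ with $x = 1/2$ (from slack $2r-c-q \geq 2$) and $l/2 \geq 1/2$; the main obstacle is verifying that $l$ is in fact at least $3$, equivalently that at least three rows of $\sigma^0_c$ have length one, so that $x + l/2 \geq 1/2 + 3/2 = 2$ as required. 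This should follow from the $1$-step condition on $\lambda$ forcing the relevant column of $\lambda$ to be exactly one box shorter than the previous, which makes several consecutive single-$a$ rows appear in $\sigma^0_c$; pinning down this count precisely is where the real work lies, and I expect it to hinge on translating the $1$-step inequality $\lambda_i - \lambda_{i+1} \leq 1$ into a lower bound on the number of length-one rows of the relevant intermediate diagram.
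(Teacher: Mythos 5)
Your outline coincides with the paper's---handle generic strata via \cref{lem:diff:usef} and \cref{prop:comb:big}, and the tight strata via \cref{prop:comb:bigr}---but the execution has genuine gaps, the first of which is fatal to the case you claim to have disposed of. You misapply \cref{prop:comb:big}: its hypothesis $2r(\lambda,\mu)-c(\lambda,\mu)-q(\lambda,\mu)\geq 4x$ yields only $\dim Z_{\tau^0}-\dim Z_\tau\geq x$, so a slack of $4$ buys codimension $\geq 1$, \emph{not} $\geq 2$. Hence your chain $2r-c-q\geq c+q\geq 2q\geq 4$ does not settle all strata with $q\geq 2$: it settles those with $c+q\geq 5$ (where $x=\frac14(c+q)>1$ and integrality of dimensions upgrades ``$>1$'' to ``$\geq 2$''), but leaves $q=c=2$ with codimension $\geq 1$ only. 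Moreover this cannot be repaired within the same inequality, since it can be sharp under the $1$-step hypothesis: for $\lambda=(5,4,3,2,1)$ and $\mu=(4,4,4,1,1,1)$ one computes $q=c=2$ and $r=4$, so $2r-c-q=4$ exactly. This is why the paper treats $(q,c)=(2,2)$ with \cref{prop:comb:bigr}, exhibiting an index $i$ with $d_i=(b)$ and $l\geq 3$ rows of length one in $\sigma^0_i$, which gives $\frac14(c+q)+\frac{l}{2}=\frac52>1$.

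The $q=1$ branch is also defective. Since a single box move changes every row length and every column height by at most one, your two ``strict'' subcases are vacuous, and everything falls into the case you call delicate; there, your claim that the $1$-step condition forces $c(\lambda,\mu)=1$ is false: $\lambda=(3,2,1)$, $\mu=(2,2,1,1)$ is a single-box degeneration of a $1$-step partition with $c(\lambda,\mu)=2$, and $c=3$ occurs for $\lambda=(4,3,2,1)$, $\mu=(3,3,2,1,1)$. So the cases $(q,c)=(1,2),(1,3)$ are not covered by your $x=\frac12$ computation (the paper handles them separately, where $l\geq 2$ suffices). Finally, the count you defer as ``the real work''---$l\geq 3$, resp.\ $l\geq 2$---is exactly where the hypothesis enters, and it must be proved, not expected: the $1$-step condition is equivalent to the column heights of $\lambda$ being strictly decreasing, i.e.\ $\widehat\lambda_i-\widehat\lambda_{i+1}\geq 1$ for all $i$; if the moved box lands in column $i$, the number of length-one rows of $\sigma^0_i$ equals the number of rows of $\mu$ of length exactly $i$, namely $\widehat\mu_i-\widehat\mu_{i+1}$, which is $(\widehat\lambda_i+1)-(\widehat\lambda_{i+1}-1)\geq 3$ when the box moves one column and $(\widehat\lambda_i+1)-\widehat\lambda_{i+1}\geq 2$ otherwise. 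Note also that you never need $x+\frac{l}{2}\geq 2$: since dimensions are integers, $x+\frac{l}{2}>1$ suffices, and this is how the paper closes all four residual cases $(q,c)\in\{(2,2),(1,3),(1,2),(1,1)\}$.
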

\begin{proof}
    Let $\tau \in \Lambda$ such that $\tau \neq \tau^0$. Thus $\mu := \pi(\tau_1) < \lambda$. We need to prove that
    \[
         \dim Z_{\tau^0} - \dim Z_{\tau} \geq 2.
    \]
    By \cref{lem:diff:usef}, $r(\lambda, \mu) \geq c(\lambda, \mu) + q(\lambda, \mu)$ and hence
    \[
        2r(\lambda, \mu) - c(\lambda, \mu) - q(\lambda, \mu) \geq 4x
    \]
    where $x = \frac 14\left(c(\lambda, \mu) + q(\lambda, \mu)\right)$. By \cref{prop:comb:big}, we get
    \[
        \dim Z_{\tau^0} - \dim Z_\tau \geq x,
    \]
    so we are done if $x > 1$.
    
    Let us assume that $x \leq 1$. For simplicity of notation, we put $c:=c(\lambda, \mu)$ and $q:=q(\lambda, \mu)$. We claim that there are only four cases left: $q = c = 2$ and $q = 1$, $c \leq 3$. Indeed, by (\ref{equ:diff:equcases}), $\lambda > \mu$ if and only if both $c \geq 1$ and $q \geq 1$. By (\ref{equ:diff:ineq}), we also have $c \geq q$. Since $4x = c+q \leq 4$, then $q < 3$. Indeed, $q \geq 3 \Rightarrow c \geq 3 \Rightarrow c+q \geq 6$. If $q = 2$, then $c \leq 2$ by the same reasoning. If $q = 1$, then $c = 4x-q \leq 4 - 1$.
    
    We analize the four cases separately and, in each case, we find an index $i$ such that $d_i = (b)$, the $ab$-diagram $\sigma^0_i$ has $l$ rows of length one and $l$ is big enough so that we can conclude the proof by \cref{prop:comb:bigr} as follows:
    \[
        \dim Z_{\tau^0} - \dim Z_\tau \geq x + \frac l2 = \frac 14(c+q) + \frac l2 > 1.
    \]
    
%     We want to prove that only few cases have $x = \frac 14(c + q) \leq 1$, namely $c = q = 2$ and $q = 1$, $c \leq 3$. Then, similarly to the proof of \cref{lem:ci:majineq}, we will examine these cases one by one using \cref{prop:comb:bigr}. In such cases, we will find $i$ such that $d_i = (b)$ and the $ab$-diagram $\sigma^0_i$ have $l$ rows of length one so that we will get $\frac 14(c+q) + \frac 12 l > 1$.

    \begin{description}
        \item[$q = 2$, $c = 2$] There are two boxes $B$, $B'$ which are lowered from $\lambda$ in order to obtain $\mu$. Let $i'$ be the column of $B$ in $\lambda$ and $i$ be the column of $B$ in $\mu$. As $c = 2$, $i = i'-1$. By (\ref{equ:comb:dlist}), the list $d_i = (b)$. In $\sigma^0_i$  the rows with indices $\widehat \mu_i = \widehat \lambda_i + 1$, $\widehat \lambda_i$ and $\widehat \lambda_i-1$ have length one. Indeed, since $\lambda$ is $1$-step, $\lambda$ does not have two different columns with the same height. Thus, $\sigma^0_i$ has $l \geq 3$ rows of length one. We found that $\frac 14(c+q) + \frac l2 \geq \frac 52 > 1$.
        
        \item[$q = 1$, $c = 3$] There is a single box $B$ which is lowered from $\lambda$ in order to obtain $\mu$. Let $i'$ be the column of $B$ in $\lambda$ and $i$ be the column of $B$ in $\mu$. As $c = 3$, $i = i'-3$. By (\ref{equ:comb:dlist}), the list $d_i = (b)$. In $\sigma^0_i$ the the rows with indices $\widehat \mu_i$, $\widehat \mu_i-1$ have length one. Thus, $\sigma^0_i$ has $l \geq 2$ rows of length one. We found that $\frac 14(c+q) + \frac l2 \geq 2 > 1$.
        
        \item[$q = 1$, $c = 2$] Similar to the previous case, there is a single box $B$ moving from column $i'=i+2$ to column $i$; $d_i = (b)$; the rows with indices $\widehat \mu_i$, $\widehat \mu_i-1$ have length one; so $\sigma^0_i$ has $l \geq 2$ rows of length one. We found that $\frac 14(c+q) + \frac l2 \geq \frac 74 > 1$.

        \item[$q = 1$, $c = 1$] There is a single box $B$ moving from column $i'=i+1$ to column $i$. Similar to the case $q = 2$, $c = 2$, $d_i = (b)$, the rows with indices $\widehat \mu_i = \widehat \lambda_i + 1$, $\widehat \lambda_i$ and $\widehat \lambda_i-1$ have length one and $\sigma^0_i$ has $l \geq 3$ rows of length one. We found that $\frac 14(c+q) + \frac l2 \geq 2 > 1$.\Qed
    \end{description}
\end{proof}
% \bib, bibdiv, biblist are defined by the amsrefs package.
\begin{bibdiv}
\begin{biblist}

\bib{beckenbach2012inequalities}{book}{
      author={Beckenbach, Edwin~F},
      author={Bellman, Richard},
       title={Inequalities},
   publisher={Springer Science \& Business Media},
        date={2012},
      volume={30},
}

\bib{broer1994normality}{incollection}{
      author={Broer, Bram},
       title={Normality of some nilpotent varieties and cohomology of line
  bundles on the cotangent bundle of the flag variety},
        date={1994},
   booktitle={Lie theory and geometry},
   publisher={Springer},
       pages={1\ndash 19},
}

\bib{hesselink1976singularities}{article}{
      author={Hesselink, Wim},
       title={Singularities in the nilpotent scheme of a classical group},
        date={1976},
     journal={Transactions of the American Mathematical Society},
      volume={222},
       pages={1\ndash 32},
}

\bib{kostant1963lie}{article}{
      author={Kostant, Bertram},
       title={Lie group representations on polynomial rings},
        date={1963},
     journal={American Journal of Mathematics},
      volume={85},
      number={3},
       pages={327\ndash 404},
}

\bib{kraft1979closures}{article}{
      author={Kraft, Hanspeter},
      author={Procesi, Claudio},
       title={Closures of conjugacy classes of matrices are normal},
        date={1979},
     journal={Inventiones mathematicae},
      volume={53},
      number={3},
       pages={227\ndash 247},
}

\bib{kraft1980minimal}{article}{
      author={Kraft, Hanspeter},
      author={Procesi, Claudio},
       title={Minimal singularities in {GL} n},
        date={1980},
     journal={Inventiones mathematicae},
      volume={62},
      number={3},
       pages={503\ndash 515},
}

\bib{kraft1982geometry}{article}{
      author={Kraft, Hanspeter},
      author={Procesi, Claudio},
       title={On the geometry of conjugacy classes in classical groups},
        date={1982},
     journal={Commentarii Mathematici Helvetici},
      volume={57},
      number={1},
       pages={539\ndash 602},
}

\bib{kostant1971orbits}{article}{
      author={Kostant, Bertram},
      author={Rallis, Stephen},
       title={Orbits and representations associated with symmetric spaces},
        date={1971},
     journal={American Journal of Mathematics},
      volume={93},
      number={3},
       pages={753\ndash 809},
}

\bib{magyar2000symplectic}{article}{
      author={Magyar, Peter},
      author={Weyman, Jerzy},
      author={Zelevinsky, Andrei},
       title={Symplectic {M}ultiple {F}lag {V}arieties of {F}inite {T}ype1},
        date={2000},
     journal={Journal of Algebra},
      volume={230},
       pages={245\ndash 265},
}

\bib{ohta1986singularities}{article}{
      author={Ohta, Takuya},
       title={The singularities of the closures of nilpotent orbits in certain
  symmetric pairs},
        date={1986},
     journal={Tohoku Mathematical Journal, Second Series},
      volume={38},
      number={3},
       pages={441\ndash 468},
}

\bib{sekiguchi1984nilpotent}{article}{
      author={Sekiguchi, Jiro},
       title={The nilpotent subvariety of the vector space associated to a
  symmetric pair},
        date={1984},
     journal={Publications of the Research Institute for Mathematical
  Sciences},
      volume={20},
      number={1},
       pages={155\ndash 212},
}

\bib{sommers2003normality}{article}{
      author={Sommers, Eric},
       title={Normality of nilpotent varieties in {E}6},
        date={2003},
     journal={Journal of Algebra},
      volume={270},
      number={1},
       pages={288\ndash 306},
}

\bib{sommers2005normality}{article}{
      author={Sommers, Eric},
       title={Normality of very even nilpotent varieties in $ {D}_ {2l}$},
        date={2005},
     journal={Bulletin of the London Mathematical Society},
      volume={37},
      number={3},
       pages={351\ndash 360},
}

\bib{vinberg1976weyl}{article}{
      author={Vinberg, E~B},
       title={The {W}eyl group of a graded {L}ie algebra},
        date={1976},
     journal={Mathematics of the USSR-Izvestiya},
      volume={10},
      number={3},
       pages={463},
}

\end{biblist}
\end{bibdiv}

\end{document}